\newtheorem{theorem}{Theorem}[section]
\newtheorem{lemma}[theorem]{Lemma}
\newtheorem{proposition}[theorem]{Proposition}
\newtheorem{corollary}[theorem]{Corollary}
\newtheorem{definition}[theorem]{Definition}
\theoremstyle{definition}
\newtheorem{example}[theorem]{Example}
\theoremstyle{remark}
\newtheorem{remark}[theorem]{Remark}
\newcommand{\R}{\mathbb{R}}
\newcommand{\Rn}{{\R^n}}
\newcommand{\N}{\mathbb{N}}
\newcommand{\X}{\mathbb{X}}
\newcommand{\eps}{\varepsilon}
\newcommand{\cG}{{\mathscr{G}}}
\newcommand{\cE}{{\mathscr{E}}}
\newcommand{\cD}{{\mathscr{D}}}
\newcommand{\cM}{{\mathcal{M}}}
\newcommand{\cB}{{\mathcal{B}}}
\newcommand{\Linfty}{{L^\infty}}
\newcommand{\Lp}{{L^p}}
\newcommand{\Lploc}{{L^p_{\text{loc}}}}
\newcommand{\Lone}{{L^1}}
\newcommand{\Loneloc}{{L^1_{\text{loc}}}}
\newcommand{\om}{\{X_{\omega}\}_{\omega\in\Omega}}
\newcommand{\BMO}{\mathrm{BMO}_{\cG}(\X)}
\newcommand{\BMOp}{\mathrm{BMO}^p_{\cG}(\X)}
\begin{document}

\title [BMO and the John-Nirenberg Inequality on Measure Spaces]
{BMO and the John-Nirenberg Inequality on Measure Spaces}

\author[Dafni]{Galia Dafni}
\address{(G.D.) Concordia University, Department of Mathematics and Statistics, Montr\'{e}al, QC H3G 1M8, Canada}
\curraddr{}
\email{galia.dafni@concordia.ca}
\thanks{G.D. was partially supported by the Natural Sciences and Engineering Research Council (NSERC) of Canada and the Centre de recherches math\'{e}matiques (CRM). R.G. was partially supported by the Centre de recherches math\'{e}matiques (CRM), the Institut des sciences math\'{e}matiques (ISM), and the Fonds de recherche du Qu\'{e}bec -- Nature et technologies (FRQNT). A.L. was partially supported by the Natural Sciences and Engineering Research Council (NSERC) of Canada, the Institut des sciences math\'{e}matiques (ISM), and a Concordia Undergraduate Research Award (CUSRA)}

\author[Gibara]{Ryan Gibara}
\address{(R.G.) Universit\'{e} Laval, D\'{e}partement de math\'{e}matiques et de statistique, Qu\'{e}bec, QC G1V 0A6, Canada}
\curraddr{}
\email{ryan.gibara@gmail.com}

\author[Lavigne]{Andrew Lavigne}
\address{(A.L.) McGill University, Department of Mathematics and Statistics, Montr\'{e}al, QC H3A 0B9, Canada}
\curraddr{}
\email{andrew77@me.com}

\subjclass[2010]{Primary 30L15 42B35 46E30}

\date{}

\begin{abstract}
We study the space $\BMO$ in the general setting of a measure space $\X$ with a fixed collection $\cG$ of measurable sets of positive and finite measure, consisting of functions of bounded mean oscillation on sets in $\cG$. The aim is to see how much of the familiar BMO machinery holds when metric notions have been replaced by measure-theoretic ones. In particular, three aspects of BMO are considered: its properties as a Banach space, its relation with Muckenhoupt weights, and the John-Nirenberg inequality. We give necessary and sufficient conditions on a decomposable measure space $\X$ for $\BMO$ to be a Banach space modulo constants.  We also develop the notion of a Denjoy family $\cG$, which guarantees that functions in $\BMO$ satisfy the John-Nirenberg inequality on the elements of $\cG$.
\end{abstract}

\maketitle


\section{Introduction}

Functions of bounded mean oscillation were  introduced by John and Nirenberg in \cite{john2}, and shown to satisfy the by-now celebrated John-Nirenberg inequality.  Mean oscillation measures the average deviation of a locally integrable function from its mean on a certain set.  Classically, on $\Rn$, the mean oscillation is measured on all cubes $Q$ with sides parallel to the axes; sometimes the cubes are restricted to lie in a fixed cube. The space of functions of bounded mean oscillation, BMO, has proven useful  in harmonic analysis and partial differential equations as a replacement for $L^\infty$, as well as as the dual of the Hardy space $H^1$, and has important connections with the theory of quasi-conformal mappings. With the observation that an equivalent characterization of BMO is obtained by replacing cubes with balls, its reach has extended beyond the Euclidean setting, to manifolds \cite{mani} and metric measure spaces \cite{metric}.

This idea of replacing cubes as the sets over which mean oscillation is measured has lead various authors to consider more general BMO spaces, notable among which is the strong BMO space where cubes are replaced by rectangles \cite{cs,kor}. The different geometric properties of cubes versus rectangles manifest as different functional properties for the corresponding BMO spaces. In particular, the constants in various inequalities are affected by this change -- see \cite{kor2} for an overview of the cases of the boundedness of the decreasing rearrangement and the absolute value operator.

Replacing familiar geometries with more general bases is not a phenomenon limited to BMO. The theory of maximal functions with respect to a general basis is a well developed field, with ties to the theory of differentiation of the integral. The book of de Guzm\'{a}n \cite{guzman} is a standard reference in the area. Muckenhoupt weights have also been considered with respect to a general basis -- see, for instance, the work of Jawerth \cite{jawerth} and P\'{e}rez \cite{perez}, and more recent work \cite{duomo, ht, ns}, which also touches on the connection between $A_p$ weights and BMO in this general context.  Sometimes restrictive assumptions are imposed in order to obtain strong results, such as the John-Nirenberg inequality -- see for example \cite{buckley, hy}.

It is interesting to investigate what are the strongest results one could obtain about BMO under the weakest assumptions, thus isolating the features which are inherent to BMO. 
Motivated by this, in previous work \cite{dafni1}, two of the authors developed a theory of BMO spaces on a domain in $\R^n$ with Lebesgue measure, where the mean oscillation was taken over sets of finite and positive measure belonging to an open cover of the domain. In this work, the completeness of BMO was demonstrated (without any connection to duality) and sharp constants were studied for various inequalities. 

This motif is continued in the present paper, where BMO is developed on a measure space $\X=(X,\cM,\mu)$ with a fixed collection $\cG$ of measurable sets of positive and finite measure, which we call generators. We denote this space by $\BMOp$ (the $p$ is an integrability parameter -- see Definition \ref{defbmo}). The aim is to see how much of the familiar BMO machinery holds when metric notions have been replaced by measure-theoretic ones. In particular, three aspects of BMO are considered: its properties as a Banach space, its relation with Muckenhoupt weights, and the John-Nirenberg inequality. 

In Section 3, the definition of BMO as a Banach space is considered. When $\X$ is a decomposable measure space -- that is, a measure space equipped with a partition, not necessarily countable, consisting of sets of finite measure (see Definition~\ref{decomposable}) -- two properties of the collection of generators $\cG$ are identified as being both necessary and sufficient for the completeness of $\BMOp$:
\begin{theorem}\label{completeness}
For a complete decomposable measure space $\X$ and a collection of generators $\cG$, the following are equivalent:
\begin{enumerate}
\item[(i)] $(\X,\cG)$ has the finite chain property and is $\sigma$-decomposable,
\item[(ii)] $\|\cdot\|_{\BMOp}$ is a norm modulo constants,
\item[(iii)] $\BMOp$ is a Banach space modulo constants.
\end{enumerate}
\end{theorem}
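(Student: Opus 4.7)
The plan is to prove the cycle $(\mathrm{iii})\Rightarrow(\mathrm{ii})\Rightarrow(\mathrm{i})\Rightarrow(\mathrm{iii})$; since a Banach space structure includes in particular the norm axioms, $(\mathrm{iii})\Rightarrow(\mathrm{ii})$ is immediate and the content lies in the other two implications. For $(\mathrm{ii})\Rightarrow(\mathrm{i})$, I would argue by contraposition. Deny first the finite chain property: there exist $G, G' \in \cG$ that cannot be joined by a finite sequence of generators with successive positive-measure intersections. The induced chain-equivalence relation then partitions $\cG$ into at least two classes; using the decomposable structure to handle measurability, one builds a function $f$ that takes two distinct constant values on the unions of generators in two separate classes. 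Such an $f$ is a.e.\ constant on every single $G \in \cG$, and so has zero mean oscillation on every generator, yet is not a.e.\ constant on $\X$, contradicting (ii). If instead $\sigma$-decomposability fails, the decomposable partition of $\X$ contains uncountably many pieces that no countable subfamily of generators can reach; on such ``excess'' pieces one modifies a baseline function by a measurable non-constant perturbation invisible to every generator, again producing a violation of (ii).

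For $(\mathrm{i})\Rightarrow(\mathrm{iii})$, take $\{f_n\}$ Cauchy in $\BMOp$. By $\sigma$-decomposability, select a countable family $\{G_k\}_{k=1}^\infty \subset \cG$ that exhausts (up to null sets) the portion of $\X$ relevant to the $\BMOp$ behaviour, and fix $G_1$. On each $G_k$, Jensen's inequality combined with the $\BMOp$ seminorm estimate shows that $f_n - (f_n)_{G_k}$ is Cauchy in $\Lp(G_k)$ and hence converges. The finite chain property lets us compare averages: along any chain from $G_1$ to $G_k$, the difference of means on successive generators is controlled by the mean oscillation on their intersection, and therefore ultimately by $\|f_n - f_m\|_{\BMOp}$, so $(f_n)_{G_k} - (f_n)_{G_1}$ is Cauchy in scalars. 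Subtracting $(f_n)_{G_1}$ from each $f_n$ then produces a sequence converging in $\Lploc$ to a measurable function $f$, and Fatou's lemma applied to the mean oscillation integrals yields $f \in \BMOp$ with $\|f_n - f\|_{\BMOp} \to 0$ modulo constants.

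The main obstacle is likely to be the failure direction of $(\mathrm{ii})\Rightarrow(\mathrm{i})$ when $\sigma$-decomposability is the property that breaks. One must produce a genuinely $\cM$-measurable, non-a.e.-constant function whose restriction to every generator has zero mean oscillation, by exploiting an uncountable excess in the decomposable partition that is undetectable by all of $\cG$; verifying that this perturbation lies inside the $\sigma$-algebra of the complete measure space while truly evading every $G \in \cG$ will require a careful dissection of Definition~\ref{decomposable} and its interaction with the generator collection. The same measurability issue resurfaces in $(\mathrm{i})\Rightarrow(\mathrm{iii})$, where the pointwise limit $f$ must be extended from the union of the countable family $\{G_k\}$ to all of $\X$ without affecting the BMO estimate, again relying on the precise form of $\sigma$-decomposability.
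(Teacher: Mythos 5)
Your cycle $(\mathrm{iii})\Rightarrow(\mathrm{ii})\Rightarrow(\mathrm{i})\Rightarrow(\mathrm{iii})$ and the broad strategy (indicator functions of sets ``invisible'' to $\cG$ for $(\mathrm{ii})\Rightarrow(\mathrm{i})$; piecewise $L^p$ limits glued by chain-transported constants for $(\mathrm{i})\Rightarrow(\mathrm{iii})$) are the same as the paper's, but there are genuine gaps at the points you only gesture at. In the $\neg$FCP branch of $(\mathrm{ii})\Rightarrow(\mathrm{i})$ you take the union $A$ of all generators in one chain-equivalence class. That set is an uncountable union and need not be measurable; worse, a generator $H$ outside the class satisfies only $\mu(H\cap G)=0$ for each class member $G$ individually, and $H\cap A$ is an uncountable union of null sets, so it need not be null and $\chi_A$ need not have zero oscillation on $H$. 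The paper never unions generators: it unions \emph{decomposition pieces} $X_\omega$ finitely connected to a fixed piece (Definition \ref{deffinitelyconnected}, Lemma \ref{decompositionsfinitelyconn}), where measurability is property (b) of Definition \ref{decomposable} and the null-union step is Proposition \ref{nullsets}. Converting connectivity of pieces into the FCP for generators then requires first proving $\sigma$-decomposability from (ii) and passing to a decomposition subordinate to $\cG$ (Proposition \ref{eomega}, Lemma \ref{norm+sigma=FCP}); your sketch treats the two halves of (i) independently, but the FCP half needs the $\sigma$-decomposability half as input. In the $\neg\sigma$-decomposability branch, what is needed is a single positive-measure subset $E$ of one finite-measure piece with $\mu(G\cap E)=0$ for all $G\in\cG$; producing it is the maximal-countable-union argument of Proposition \ref{normFCPsigma2} (a countable union $Q$ of sets $G\cap X_\omega$ of maximal measure, whose leftover is invisible to $\cG$), which your ``uncountably many unreachable pieces'' description does not supply and slightly misstates, since failure of $\sigma$-decomposability already occurs if one piece lacks a countable essential subcover.

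In $(\mathrm{i})\Rightarrow(\mathrm{iii})$, reducing to a single countable family $\{G_k\}$ is only legitimate when $\X$ is $\sigma$-finite: $\sigma$-decomposability gives a countable essential subcover of \emph{each} piece, but there may be uncountably many pieces. The paper instead fixes a base generator, defines for each $G$ a constant $C_G$ by telescoping the Cauchy means along some finite chain, verifies that $C_G$ is independent of the chain chosen (a step your sketch omits but which is needed for consistency of the glued limit), and assembles $f$ as a pointwise-well-defined sum over the possibly uncountable subordinate decomposition. Your Fatou step is then unnecessary, since $f_i-(f_i)_G\to f-f_G$ in $\Lp(G)$ directly. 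None of these repairs changes your architecture, but each is a missing idea rather than a routine verification.
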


The John-Nirenberg inequality, a fundamental property of the classical BMO space, is the statement that for any $G\in\cG$ and $f\in\BMO$, 
\begin{equation}\label{jnintro}
\mu(\{x\in G:|f(x)-f_G|>t  \} ) \leq c_1\mu(G)\exp\left(\frac{-c_2}{\|f\|_{\BMO}} t\right) \,,
\end{equation}
for all $t>0$ and some constants $c_1,c_2>0$, where $f_G$ is the mean of $f$ on $G$. This inequality does not hold in every BMO space, however (for example, see the space considered in \cite{cl,ly}). We say that $\BMO$ has the John-Nirenberg property if \eqref{jnintro} holds for all $G\in\cG$ and $f\in\BMO$, with constants $c$ and $C$ that are independent of $f$. 

In Section 4, a connection is established between the aforementioned theory of Muckenhoupt weights with respect to $\cG$, $A_2(\X,\cG)$, and the John-Nirenberg property -- see \cite{spanish} for the situation in the classical setting of cubes. It turns out that $\log A_2(\X,\cG)$ is always contained in $\BMO$, see Proposition \ref{loga2bmo}, but the converse need not hold. In Theorem \ref{jnpshs}, we show that having $\alpha f\in\log A_2(\X,\cG)$ for $f\in \BMO$,  for all sufficiently small $\alpha>0$,  is equivalent to $\BMO$ having the John-Nirenberg property.

In Section 5, a large class of $\BMO$ spaces is considered, where $\cG$ is a so-called Denjoy family -- see Definition \ref{defdenjoy}. At this level of generality, a decomposition theorem of Calder\'{o}n-Zygmund type is obtained, where measure-theoretic conditions on the basis $\cG$ have replaced any metric notions. This class of BMO spaces includes not only classical BMO, but also strong BMO and BMO on spaces of homogeneous type where $\cG$ is the collection of all open balls. We show that
\begin{theorem}
\label{Denjoytheorem}
If $\cG$ is a Denjoy family, then $\BMO$ has the John-Nirenberg property. 
\end{theorem}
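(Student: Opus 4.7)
The plan is to carry out the classical Calderón--Zygmund stopping-time iteration, using the Calderón--Zygmund-type decomposition that, by hypothesis, is available for a Denjoy family $\cG$ (as announced in Section~5). Since \eqref{jnintro} is homogeneous in $\|f\|_{\BMO}$, we may normalize $\|f\|_{\BMO}=1$ and then fix $G\in\cG$.

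First I would apply the Calderón--Zygmund decomposition to $f-f_G$ on $G$ at some threshold $b>1$ (to be chosen large enough below). This should produce a pairwise disjoint collection $\{G_j^{(1)}\}\subset\cG$ of subsets of $G$ satisfying the three standard properties: on each selected $G_j^{(1)}$,
\[
b \;<\; \frac{1}{\mu(G_j^{(1)})}\int_{G_j^{(1)}}|f-f_G|\,d\mu \;\leq\; Ab,
\]
where $A$ is the structural constant furnished by the Denjoy hypothesis; outside the union one has $|f-f_G|\leq b$ almost everywhere; and the total measure satisfies $\sum_j\mu(G_j^{(1)})\leq b^{-1}\int_G|f-f_G|\,d\mu\leq b^{-1}\mu(G)$, where the last step uses $\|f\|_{\BMO}=1$. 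The upper bound in the display, which is the crucial point, yields $|f_{G_j^{(1)}}-f_G|\leq Ab$.

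Next I would iterate: apply the same decomposition to $f-f_{G_j^{(1)}}$ on each $G_j^{(1)}\in\cG$, producing a second-generation family, and continue. By induction, at generation $N$ one obtains a pairwise disjoint collection $\{G_\alpha^{(N)}\}\subset\cG$ with $\sum_\alpha\mu(G_\alpha^{(N)})\leq b^{-N}\mu(G)$ and $|f-f_G|\leq NAb$ almost everywhere on $G\setminus\bigcup_\alpha G_\alpha^{(N)}$. Given $t>Ab$, setting $N=\lfloor t/(Ab)\rfloor$ then gives
\[
\mu(\{x\in G:|f(x)-f_G|>t\}) \;\leq\; b^{-N}\mu(G) \;\leq\; b\,\mu(G)\exp\!\left(-\frac{\log b}{Ab}\,t\right),
\]
while for $0<t\leq Ab$ the trivial bound $\mu(G)$ is absorbed into the constants. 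Restoring the normalization proves \eqref{jnintro} with $c_1,c_2$ depending only on $A$ and the chosen $b$, hence independent of $f$.

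The main obstacle I anticipate is the upper-bound half of the first display above: in the classical dyadic argument it comes from comparing a selected cube to its parent via a doubling factor, whereas in the Denjoy setting this comparability must be read off from the measure-theoretic hypotheses defining the decomposition. A secondary point is that the iteration requires applying the decomposition inside each $G_j^{(1)}\in\cG$, which relies on the Denjoy structure being inherited by (or at least compatible with) the selected subcollection; this should be built into Definition~\ref{defdenjoy} since the statement of the theorem places no distinguished role on the initial $G$. Once these are verified, the stopping-time iteration and the geometric-series computation above are routine.
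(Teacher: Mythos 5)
Your high-level strategy --- normalize, decompose at a threshold, iterate, and sum the geometric series --- is exactly the paper's (it runs the classical Garc\'{i}a-Cuerva--Rubio de Francia argument through Lemmas \ref{calderonzygmund} and \ref{almostJN}), and your iteration and final estimate correspond to Lemma \ref{almostJN} and the closing computation. The genuine gap is that you take the Calder\'{o}n--Zygmund-type decomposition as given ``by hypothesis,'' when deriving it from the four axioms of Definition \ref{defdenjoy} is the entire content of the theorem; nothing in the definition of a Denjoy family hands you such a decomposition. The paper's Lemma \ref{calderonzygmund} has to build it: one forms the collection $\mathscr{A}_1$ of small generators meeting $G_0$ on which the average of $|f-f_{G_0}|$ exceeds $\alpha$, selects a nearly maximal $G_1$ (so that $d_1<a\mu(G_1)$, which is where $a>1$ enters), uses Lemma \ref{maximal} (via the growth property) to arrange that the Denjoy double $G_1'$ leaves $\mathscr{A}_1$, and repeats. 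The stopping estimate $\fint_{G'}|f-f_{G_0}|\,d\mu\leq k\alpha$ --- your ``upper-bound half'' --- is then \eqref{control}, obtained by comparing $G'$ to $G_0'$ through the doubling constants, and the good-set bound outside the selected doubles is controlled through a maximal function together with the weak differentiation property; Example \ref{noJNPexample} shows the theorem actually fails without that axiom, so no proof can avoid invoking it somewhere.

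A secondary point: the form of decomposition you posit is not the one that holds here. In a general measure space you cannot chop $G$ into generator subsets, so the selected sets are generators that merely intersect $G_0$, the bad set is covered by their doubles $G_j'\subset G_0'$, and the measure bound comes out as $\sum_j\mu(G_j')\leq(K/\alpha)\mu(G_0)$ with $K=b^2(b+1)$ a structural constant --- not $\alpha^{-1}\mu(G_0)$ --- so one must take the threshold $\alpha>K$ for the iteration to contract. On the other hand, your worry about the decomposition being ``inherited'' by the selected sets dissolves once one notes that the selected sets are themselves generators, so the lemma applies verbatim to each of them. With Lemma \ref{calderonzygmund} in hand, your iteration and geometric-series step are correct and match Lemma \ref{almostJN}.
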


Finally, in Section 6 we collect a list of examples that illustrate the applicability and subtleties of the theory developed in this paper. Extensive details are provided for the benefit of the reader. 


\section{Preliminaries}

\subsection{Bounded mean oscillation}

Let $\X=(X, \mathcal{M}, \mu)$ be a nontrivial semifinite measure space. Writing $\cM^{\ast} = \{E \in \cM: 0<\mu(E)< \infty\}$, assume that there exists a collection $\cG\subset\cM^{\ast}$ that covers $X$. The sets $G\in\cG$ will be called generators. 

We begin by defining a notion of local integrability with respect to this collection $\cG$. Here, and elsewhere throughout the paper, $1\leq{p}<\infty$. 

\begin{definition}\label{deflploc}
We say that a measurable function $f:X\rightarrow\R$ is locally $p$-integrable with respect to $\cG$ if 
$$
\int_G\!|f|^p\, d\mu < \infty
$$ 
for every $G \in \cG$. We denote the space of such functions by $\Lploc(\X,\cG)$. 
\end{definition}

This definition of local integrability is adapted to the choice of collection $\cG$ and is purely measure theoretic. When a topology is available, one often sees local integrability defined in terms of integrals over compact subsets. Example \ref{localintegrability} shows that the two notions need not agree. 

Generalizing the work done in \cite{dafni1}, we define a BMO space with respect to $\cG$. For a generator $G\in\cG$, we write
$$
f_G := \fint_G \! f \, d\mu := \frac{1}{\mu(G)} \int_G\! f \, d\mu\,.
$$

\begin{definition}\label{defbmo}
We say that a function $f\in \Lploc(\X,\cG)$ is in $\BMOp$ if
$$
\|f\|_{\BMOp}:=\sup\limits_{G \in \cG} \left( \fint_G\! |f-f_G|^p \, d\mu \right)^{1/p}\!<\infty\,.
$$
When $p=1$, we simply write $\BMO$.
\end{definition}

It is easy to see that $\|\cdot\|_{\BMOp}$ defines a seminorm, but cannot define a norm: if $f=K$ almost everywhere for some constant $K$ (note that such a function is in $\Lploc(\X,\cG)$ for any $1\leq p<\infty$ and for any choice of $\cG$), then $\|f\|_{\BMOp}=0$. If one considers the space $\BMOp$ modulo constants, then there is a question as to whether $\|\cdot\|_{\BMOp}$ defines a norm or not. 

The following section is devoted to investigating conditions on the pair $(\X,\cG)$ that guarantee an affirmative answer to this question. In the first instance, it will be necessary to specialize to a class of measure space that is more restrictive than just being semifinite. One can consider $\sigma$-finite measure spaces, but in the interest of obtaining results with the weakest possible assumptions, we consider what are known as decomposable measure spaces. 

\subsection{Decomposable measure spaces}

A partition of $X$ is a pairwise disjoint family $\om$ of nonempty sets in $\cM$ satisfying 
$$
X=\bigcup_{\omega\in\Omega}X_{\omega}\,.
$$
In general, partitions are not assumed to be countable, in the sense that the index set $\Omega$ need not be countable. 

\begin{definition}\label{decomposable}
A measure space $\X=(X, \cM, \mu )$ is said to be decomposable (or strictly localizable) if there is a partition $\om$ of $X$ such that
\begin{enumerate}
\item[(a)] $\mu(X_{\omega})<\infty$ for all $\omega\in\Omega$;
\item[(b)] if $A\subset{X}$ and $A \cap X_{\omega} \in \cM$ for all $\omega \in \Omega$, then $A \in \cM$; and,
\item[(c)] if $E \in \cM$, then 
$$
\mu(E) = \sum\limits_{\omega \in \Omega} \mu (E \cap X_{\omega} )\,.
$$
\end{enumerate}
Such a partition is called a decomposition of $\X$.
\end{definition}

Note that when the partition is uncountable, the sum in $(c)$ is interpreted as the supremum of all sums over finite subfamilies of the partition. 

The class of decomposable measure spaces includes most of the measure spaces seen in applications, and they form a natural intermediary between the $\sigma$-finite measure spaces and the semifinite measure spaces. Furthermore, they are precisely those measure spaces for which the canonical map between $\Linfty$ and the dual space $(\Lone)^*$ is an isometric isomorphism (see \cite[243F and 243G]{fremlin2}). 

An example of a decomposable measure space that is not $\sigma$-finite is $\R$ with its power set and the counting measure. In Example \ref{ex2}, a less standard example is constructed. When the counting measure on $\R$ is restricted to the $\sigma$-algebra of countable and co-countable sets, we obtain a semifinite measure space that is not decomposable.

Restricting to decomposable measure spaces allows us to make sense of the measure of some uncountable unions of sets. An example of this is the following.

\begin{proposition}\label{nullsets}
Let $\X$ be decomposable with decomposition $\om$. If $N^{\omega} \subset X_{\omega}$ is a null set for each $\omega \in \Omega$, then $N=\bigcup_{\omega \in \Omega} N^{\omega}$ is also a null set.
\end{proposition}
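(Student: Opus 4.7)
The proof should follow directly from the defining properties (b) and (c) of a decomposable measure space; the task really just amounts to verifying that both properties apply to $N$.

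My plan is to first observe that, since a null set is by convention a measurable set of measure zero, each $N^\omega$ lies in $\cM$. Then I would show that for each fixed $\omega\in\Omega$,
$$
N\cap X_\omega = N^\omega,
$$
which uses two facts: $N^\omega\subset X_\omega$ gives $N^\omega \cap X_\omega = N^\omega$, and the pairwise disjointness of the partition $\om$ gives $N^{\omega'}\cap X_\omega = \emptyset$ whenever $\omega'\neq\omega$. So $N\cap X_\omega\in\cM$ for every $\omega$, and property (b) of Definition \ref{decomposable} forces $N\in\cM$.

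Once measurability is in hand, I would apply property (c) to $N$ itself to conclude
$$
\mu(N) = \sum_{\omega\in\Omega}\mu(N\cap X_\omega) = \sum_{\omega\in\Omega}\mu(N^\omega) = 0,
$$
where the last equality holds regardless of whether the sum is interpreted as a countable series or, in the uncountable case, as the supremum over finite subfamilies (every finite partial sum vanishes).

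There is no real obstacle here; the only delicate point is the reminder that for an uncountable $\Omega$, the arbitrary union $N = \bigcup_{\omega}N^\omega$ is not \emph{a priori} measurable, which is precisely the role played by axiom (b) of decomposability. The proposition is essentially a demonstration that this axiom is strong enough to close the $\sigma$-algebra under uncountable unions provided each slice is supported in its own cell of the partition.
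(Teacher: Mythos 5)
Your proof is correct and follows exactly the paper's argument: identify $N\cap X_\omega = N^\omega$ via disjointness of the partition, invoke property (b) for measurability of $N$, then property (c) to sum the zero measures. No differences worth noting.
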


\begin{proof}
As $\om$ forms a partition of $X$, $N\cap X_{\omega}=N^{\omega}\in\cM$ for any $\omega\in\Omega$. From $(b)$ of the definition of decomposable measure space, it follows that $N\in\cM$. Then, $(c)$ implies that 
$$
\mu(N)= \sum\limits_{\omega \in \Omega} \mu (N \cap X_{\omega} ) = \sum\limits_{\omega \in \Omega} \mu(N^{\omega}) = 0\,.
$$
\end{proof}

\begin{corollary}\label{decomposable2}
Let $\X$ be decomposable with decomposition $\om$. Writing $\Omega_E = \{ \omega \in \Omega : \mu (E \cap X_{\omega}) > 0 \}$ for $E\in\cM$, we can redefine property $(c)$ in Definition \ref{decomposable} as follows:
$$
\mu(E) = \sum\limits_{\omega \in \Omega_E} \mu(E \cap X_{\omega})\,.
$$
As a consequence, if there are at most countably many $\{ X_k \}_{k=1}^{\infty} \subset \{ X_{\omega} \}_{\omega \in \Omega}$ such that $ \mu(E \cap X_k) > 0$, then 
$$
\mu(E) =  \sum\limits_{k=1}^{\infty} \mu(E \cap X_k)\,.
$$
\end{corollary}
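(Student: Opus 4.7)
The plan is to split the proof into two parts: first the redefinition of (c), which is just a matter of discarding zero terms from the generalized sum, and then the countable consequence, which will use Proposition~\ref{nullsets} to control the ``leftover'' of $E$ outside a countable subcollection.

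For the redefinition, note that for every $\omega \in \Omega \setminus \Omega_E$ we have $\mu(E \cap X_\omega) = 0$ by definition of $\Omega_E$. Since the sum in property~(c) is interpreted as the supremum over finite subfamilies of $\Omega$, adjoining zero-valued indices contributes nothing, so
\[
\mu(E) = \sum_{\omega \in \Omega}\mu(E \cap X_\omega) = \sum_{\omega \in \Omega_E}\mu(E \cap X_\omega).
\]

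For the consequence, I would enumerate the at most countably many relevant pieces as $\{X_k\}_{k \in I} \subset \{X_\omega\}_{\omega\in\Omega}$ with $I \subset \N$, and set $Y := \bigcup_{k \in I} X_k$; this lies in $\cM$ as a countable union. To see that $E \setminus Y$ is null, for each $\omega \in \Omega$ let $N^\omega := E \cap X_\omega$ if $\omega \notin \Omega_E$, and $N^\omega := \emptyset$ otherwise. Each $N^\omega$ is then a null subset of $X_\omega$, so Proposition~\ref{nullsets} applies and yields that $N := \bigcup_{\omega\in\Omega} N^\omega$ is measurable with $\mu(N) = 0$. Since $N = E \setminus Y$, this gives $\mu(E) = \mu(E\cap Y)$, and by pairwise disjointness of the $X_k$ together with countable additivity of $\mu$ we conclude $\mu(E\cap Y) = \sum_{k \in I} \mu(E\cap X_k)$, which is the desired identity.

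The only mildly delicate step is ensuring $E\setminus Y$ is measurable and null; invoking Proposition~\ref{nullsets} — itself an immediate consequence of property~(b) in the definition of decomposability — sidesteps any need to wrestle with uncountable unions directly.
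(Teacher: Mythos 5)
Your proof is correct. The paper states this as an unproved corollary, and your first paragraph gives exactly the intended argument: since the uncountable sum in property (c) is a supremum over finite subsums, the indices $\omega \notin \Omega_E$ contribute nothing and may be discarded. For the second assertion you take a slightly longer route than necessary — once the first display is established, the hypothesis says precisely that $\Omega_E$ is countable and is enumerated by the $X_k$, so the second display is just a re-indexing of the first; your detour through Proposition~\ref{nullsets} (showing $E \setminus \bigcup_k X_k$ is null and then invoking countable additivity) is nevertheless valid and consistent with the paper's placement of this statement as a corollary of that proposition.
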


Another useful fact is that $\Omega_{E}$ is countable whenever $E$ has finite measure. This follows from $(c)$ and the definition of an uncountable sum. We record the statement below for later reference.

\begin{proposition}\label{finitedecomposition}
Let $\X$ be decomposable with decomposition $\om$. If $E \in \cM$ has finite measure, then there exist at most countably many $\omega \in \Omega$ such that $\mu(E \cap X_{\omega})> 0$.
\end{proposition}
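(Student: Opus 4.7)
The plan is to exploit property (c) of the decomposition together with the finiteness of $\mu(E)$, using the standard ``countable support of a summable uncountable family'' argument.

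First I would recall that, by property (c) of Definition \ref{decomposable}, we have
$$
\mu(E) = \sum_{\omega \in \Omega} \mu(E \cap X_\omega),
$$
where, per the remark following the definition, the right-hand side is interpreted as the supremum of $\sum_{\omega \in F} \mu(E \cap X_\omega)$ over finite $F \subset \Omega$. Since $\mu(E) < \infty$, this supremum is finite.

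Next I would stratify the set of interest by the size of the contribution. For each positive integer $n$, set
$$
\Omega_n := \{\omega \in \Omega : \mu(E \cap X_\omega) > 1/n\}.
$$
The key observation is that $\Omega_n$ must be finite: if it contained $N$ distinct indices $\omega_1, \dots, \omega_N$, then picking $F = \{\omega_1, \dots, \omega_N\}$ would give a finite partial sum at least $N/n$, so the supremum would be at least $N/n$ for every $N$, contradicting $\mu(E) < \infty$. Hence $\#\Omega_n \le n \mu(E)$.

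Finally, I would conclude by writing $\Omega_E = \bigcup_{n=1}^{\infty} \Omega_n$, which is a countable union of finite sets and therefore countable. I do not foresee any real obstacle here; the only subtlety is being careful that the uncountable sum in (c) is genuinely defined as the supremum over finite subfamilies, which the paper has already spelled out, so the estimate $\#\Omega_n \le n\mu(E)$ is immediate rather than requiring any further measure-theoretic input.
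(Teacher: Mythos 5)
Your proof is correct and is exactly the argument the paper intends: the paper simply asserts that the claim ``follows from $(c)$ and the definition of an uncountable sum,'' and your stratification into the finite sets $\Omega_n = \{\omega : \mu(E\cap X_\omega) > 1/n\}$ is the standard way to fill in that one-line justification.
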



\section{$\BMOp$ as a Banach Space}

Consider a pair $(\X,\cG)$ comprised of a decomposable measure space $\X$ and a fixed collection of generators $\cG$. We assume that $\X$ is complete; otherwise, one may replace $\X$ with its completion. In this section, we give necessary and sufficient conditions on $(\X,\cG)$ to guarantee that $\|\cdot\|_{\BMOp}$ is a norm modulo constants, and that the resulting normed space $\BMOp$ is a Banach space.

\subsection{$\mathbf{\sigma}$-decomposability and the finite chain property}

The first property we consider is related to the notion of an {\em essential cover}, by which we mean a cover ignoring sets of measure zero. 

\begin{definition}\label{sigmadecomposable}
We say that $(\X,\cG)$ is $\sigma$-decomposable if there exists a decomposition $\om$ such that for every $\omega \in \Omega$, $\cG$ contains a countable essential subcover $\cG_{\omega} \subset \cG$ of $X_{\omega}$. The collection
$$
\bigcup_{\omega\in\Omega}\cG_{\omega}
$$
is called a $\sigma$-decomposition of $\X$.
\end{definition}

\begin{remark}\label{transfer}
Note that, by $(c)$ of Definition \ref{decomposable} and Proposition \ref{finitedecomposition}, if $(\X,\cG)$ is $\sigma$-decomposable with respect to some decomposition $\om$, then it will be $\sigma$-decomposable with respect to any other decomposition $\{X_\alpha\}_{\alpha\in\mathcal{A}}$.
\end{remark}

It follows that if $(\X,\cG)$ is $\sigma$-decomposable and, in addition, that $\X$ is $\sigma$-finite, then $\cG$ contains a countable essential subcover of $X$, and conversely. Thus, the notion of $\sigma$-decomposability is an analogue of the notion of Lindel\"{o}f from the topological setting (see Section 4.1) as it allows us to cover partitions of the space (or the whole space in the $\sigma$-finite case), up to a null set, with a countable subcollection of $\cG$.

\begin{proposition}\label{eomega}
If $(\X,\cG)$ is $\sigma$-decomposable, then there exists a decomposition $\cE=\{E_0\}\cup\{ E_{\omega} \}_{\omega \in \Omega}$ of $\X$ with the following properties:
\begin{enumerate}
\item[(i)] $E_0$ is a null set;
\item[(ii)] $E_{\omega}\in\cM^*$ for all $\omega\in{\Omega}$; and,
\item[(iii)] for every $\omega \in \Omega$, there exists a $G_{\omega} \in \cG$ such that $E_{\omega} \subset G_{\omega}$.
\end{enumerate}
We say that this decomposition is subordinate to $\cG$.
\end{proposition}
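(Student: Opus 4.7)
The plan is to refine the given decomposition $\{X_\omega\}_{\omega \in \Omega}$ using the countable essential subcovers $\cG_\omega \subset \cG$ guaranteed by $\sigma$-decomposability, disjointifying each subcover to produce countably many pieces of $X_\omega$ each sitting inside a single generator, and then collecting all the uncovered remainders into a single null set $E_0$.

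First I fix $\omega \in \Omega$, enumerate $\cG_\omega = \{G_{\omega,k}\}_{k=1}^\infty$, and form the disjointified sets
$$
F_{\omega,k} := (X_\omega \cap G_{\omega,k}) \setminus \bigcup_{j=1}^{k-1} G_{\omega,j}.
$$
By construction these are pairwise disjoint measurable sets of finite measure, each contained in the generator $G_{\omega,k}$. Since $\cG_\omega$ is an essential cover of $X_\omega$, the complement $N_\omega := X_\omega \setminus \bigcup_k F_{\omega,k}$ is a null set. I then discard the pieces of measure zero: let $K_\omega = \{k : \mu(F_{\omega,k}) > 0\}$ and set $N'_\omega := N_\omega \cup \bigcup_{k \notin K_\omega} F_{\omega,k}$, which remains a null subset of $X_\omega$.

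Next I assemble the global partition. Define $E_0 := \bigcup_{\omega \in \Omega} N'_\omega$, which Proposition \ref{nullsets} identifies as a null set in $\cM$, and reindex the retained pieces $\{F_{\omega,k} : \omega \in \Omega,\, k \in K_\omega\}$ as $\{E_\alpha\}_{\alpha \in \Omega'}$ for a suitable index set $\Omega'$. By construction these sets together with $E_0$ are pairwise disjoint with union $X$, each $E_\alpha$ has positive finite measure, and each is contained in a generator, so properties (i)--(iii) are in place.

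Finally I verify the two remaining axioms of a decomposition for the family $\cE$. For axiom (b), given $A \subset X$ with $A \cap E_0 \in \cM$ and $A \cap E_\alpha \in \cM$ for every $\alpha$, I decompose, for each $\omega$,
$$
A \cap X_\omega = (A \cap E_0 \cap X_\omega) \cup \bigcup_{k \in K_\omega} (A \cap F_{\omega,k}),
$$
a countable union of measurable sets, and conclude $A \in \cM$ by axiom (b) applied to the original decomposition $\{X_\omega\}$. For axiom (c), I apply countable additivity of $\mu$ inside each $X_\omega$ (noting $\mu(N'_\omega) = 0$) and then sum over $\omega$ against the original decomposition. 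The main subtlety I anticipate is rearranging the resulting double sum into a single sum indexed by $\Omega'$; this is the step I expect to require the most care, and I would handle it by invoking Proposition \ref{finitedecomposition} when $\mu(A)$ is finite (reducing matters to a countable double series) and otherwise appealing to the supremum-over-finite-subfamilies definition of an uncountable sum.
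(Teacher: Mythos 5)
Your proposal is correct and follows essentially the same route as the paper's proof: disjointify each countable essential subcover $\cG_\omega$ to partition $X_\omega$ into pieces each contained in a single generator, sweep the uncovered remainders (and the measure-zero pieces) into one null set $E_0$ via Proposition \ref{nullsets}, and verify axioms (b) and (c) of Definition \ref{decomposable} by reducing to countable unions inside each $X_\omega$ and invoking Proposition \ref{finitedecomposition} for the rearrangement of sums. Your explicit absorption of the null pieces $F_{\omega,k}$, $k\notin K_\omega$, into $E_0$ is a slightly more careful rendering of the paper's ``we may assume $\mu(E^\alpha_n)>0$,'' but the argument is the same.
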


\begin{proof}
Let $\{ X_{\alpha} \}_{\alpha \in \mathcal{A}}$ be a decomposition of $\X$. By Remark \ref{transfer}, for every $\alpha \in \mathcal{A}$, there is a countable essential subcover $\cG_{\alpha}\subset\cG$ of $X_\alpha$. By the definition of essential cover, we can assume that $\mu (G \cap X_{\alpha}) > 0$ for every $G \in \cG_{\alpha}$.

First, we construct the set
$$
E_0 = \bigcup\limits_{\alpha \in \mathcal{A}} \left[ X_{\alpha} \backslash \left( \bigcup\limits_{G \in \cG_{\alpha}} G \right) \right]\,.
$$
From the definition of essential cover and Proposition \ref{nullsets}, it follows that $E_0$ is a null set, which is (i).

Fixing $\alpha \in \mathcal{A}$, write $\cG_{\alpha} = \{ G_n^{\alpha} \}$ and
$$
E^{\alpha}_n = X_{\alpha} \cap \left( G_n^{\alpha} \backslash \bigcup\limits_{k=1}^{n-1} G_k^{\alpha} \right)\,.
$$
By construction, $\{E^{\alpha}_n\}$ forms a partition of $X_\alpha\setminus E_0$ and $E^{\alpha}_n \subset G^{\alpha}_n$ for every $n\in\N$. This implies, in particular, that $\mu(E^{\alpha}_n)\leq \mu(G^{\alpha}_n)<\infty$ for every $n\in\N$. Furthermore, since the collection $\cG_{\alpha}$ is countable, we may assume that $\mu(E^{\alpha}_n)>0$ for every $n\in\N$. 

Doing this for every $\alpha\in\mathcal{A}$, we obtain a partition $\{E_n^\alpha\}_{n\in\N,\alpha\in\mathcal{A}}$ of $X\setminus E_0$. Renaming it $\{E_\omega\}_{\omega\in\Omega}$, the previous paragraph implies that (ii) and (iii) hold. It remains to show that $\cE=\{E_0\}\cup\{E_\omega\}_{\omega\in\Omega}$ is a decomposition of $\X$. From the construction, condition $(a)$ of Definition \ref{decomposable} is satisfied. 

Let $A\subset{X}$ be such that $A\cap E_\omega\in\cM$ for all $\omega\in\Omega$. Note that $A\cap E_0\in\cM$ is always true by completeness of $\X$. Fix an $\alpha\in\mathcal{A}$ and a countable subcollection $\{E_n^\alpha\}\subset\{E_\omega\}_{\omega\in\Omega}$ such that
$$
X_\alpha = (E_0\cap X_\alpha)\cup\left(\bigcup_n E_n^\alpha \right)\,.
$$
Hence,
$$
A\cap X_\alpha = (A \cap E_0\cap X_\alpha)\cup\left(\bigcup_n (A\cap E_n^\alpha) \right)\,.
$$
The set $A \cap E_0\cap X_\alpha$ is measurable by completeness and each $A\cap E_n^\alpha$ is measurable by assumption. As $\sigma$-algebras are closed under countable unions, $A\cap X_\alpha$ is measurable. Since this holds for every $\alpha\in\mathcal{A}$ and $\{ X_{\alpha} \}_{\alpha \in \mathcal{A}}$ is a decomposition of $\X$, it follows that $A$ is measurable. This is condition $(b)$ of Definition \ref{decomposable}.

Let $E \in \cM$. By Corollary \ref{decomposable2} and Proposition \ref{finitedecomposition}, if $\mu(E) < \infty$, there exists a countable subset $\{ X_k \} \subset \{ X_{\alpha} \}_{\alpha \in A}$ such that
$$
\mu(E) = \sum_{k} \mu(E \cap X_k )\,.
$$
By construction, for each $k$ there exists a countable subset $\{ E^k_{n} \} \subset \{ E_{\omega} \}_{\omega \in \Omega}$ that essentially covers $X_k$, and so 
$$
\mu(E)=\sum_{k} \mu(E \cap X_k) = \sum_{k,n} \mu(E \cap E^k_{n}) =  \sum\limits_{\omega \in \Omega} \mu(E \cap E_{\omega} )\,,
$$
where the last equality follows from the fact that $\mu(E\cap E_{\omega}) =0$ for every $E_{\omega} \notin \{E^k_{n}\}$.

If $\mu ( E ) = \infty$, then we have two subcases: If $E$ intersects at most countably many $X_{\alpha}$ of strictly positive measure, then the proof is similar to the finite case because of Corollary \ref{decomposable2}. If $E$ intersects uncountably many $X_{\alpha}$ of strictly positive measure, then it intersects uncountably many $E_{\omega}$ with measure greater than zero. So we get that $\sum_{\omega \in \Omega} \mu ( E \cap E_{\omega} )$ is a sum of uncountably many strictly positive numbers, and therefore equals infinity.

Combining all cases together, we get that for every $E \in \cM$,
$$
\mu ( E ) = \sum\limits_{\omega \in \Omega} \mu ( E \cap E_{\omega} )\,,
$$
which is $(c)$ of Definition \ref{decomposable}.
\end{proof}

From here on, when we write $\cE$ we mean a decomposition $\{E_0\}\cup\{ E_{\omega} \}_{\omega \in \Omega}$ of $\X$ subordinate to $\cG$. For a fixed $\omega \in \Omega $, we will reserve the notation $G_{\omega}$ as a fixed generator in $\cG$ such that $E_{\omega} \subset G_{\omega}$.

\begin{definition}\label{defFCP}
A finite chain is a finite subset of $\cG$, $\{G_n\}_{n=0}^{N}$, such that $\mu \left( G_{n-1} \cap G_n \right) > 0$ for all $1\leq{n}\leq{N}$. 

We say that $(\X,\cG)$ has the finite chain property (FCP) if for every $G$ and $G'$ in $\cG$, there exists a finite chain $\{G_n\}_{n=0}^{N}$ with $G=G_0$ and $G' = G_N$.
\end{definition}

Note that the connection of two sets by a finite chain is a reflexive, symmetric, and transitive property.

In \cite{dafni1}, the proof of completeness of BMO with respect to an open cover of a domain in $\Rn$ crucially makes use of path-connectedness. But in the end, it was because it implied the finite chain property that the result was obtained. Hence, in our general space, where we do not assume a topology, the FCP replaces path-connectedness and as we will see, along with $\sigma$-decomposability, not only plays a key role in the proof of completeness, but is actually a necessary condition.

\subsection{Proof of Theorem \ref{completeness}}

As pointed out in the preliminaries, the seminorm $\|\cdot\|_{\BMOp}$ cannot be a norm on $\BMOp$. It is, however, invariant under the map $f\mapsto f+K$ for any constant $K$, and so we consider $\BMOp$ modulo constants. As the kernel of $\|\cdot\|_{\BMOp}$ contains those functions that are constant almost everywhere, to show that $\|\cdot\|_{\BMOp}$ is a norm modulo constants, it suffices to show the following property:
\begin{equation}\label{normhomogeneity}
\|f\|_{\BMOp} = 0 \implies f = \mathrm{constant \; a.e. \; on} \; X\,.
\end{equation}

The goal of this subsection is to prove Theorem \ref{completeness}. We begin by presenting some lemmas that will be needed. 

\begin{lemma}\label{FCPconstant}
Let $f \in \BMOp$ satisfy $\|f\|_{\BMOp} = 0$ and $f = K$ a.e. on $G_0\in\cG$. If $\{ G_n \}_{n=0}^N$ is a finite chain, then $f = K$ a.e. on $G_N$.
\end{lemma}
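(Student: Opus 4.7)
The plan is a straightforward induction on the length $N$ of the finite chain, with the key observation being that the hypothesis $\|f\|_{\BMOp}=0$ forces $f$ to coincide almost everywhere with $f_G$ on each generator $G\in\cG$.

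First I would unpack the hypothesis $\|f\|_{\BMOp}=0$: by the definition of the seminorm as a supremum, this gives $\fint_G|f-f_G|^p\,d\mu=0$ for every $G\in\cG$. Since $|f-f_G|^p$ is a nonnegative measurable function with vanishing integral, it is zero $\mu$-a.e. on $G$, so $f=f_G$ a.e.\ on $G$ for every generator.

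Now I would set up the induction. For the base case $n=0$, the hypothesis $f=K$ a.e.\ on $G_0$ combined with $f=f_{G_0}$ a.e.\ on $G_0$ forces $f_{G_0}=K$ (both constants describe $f$ a.e.\ on a set of positive measure $\mu(G_0)>0$). For the inductive step, assume $f=K$ a.e.\ on $G_{n-1}$. Applying the opening observation to $G_n$, we have $f=f_{G_n}$ a.e.\ on $G_n$, hence also a.e.\ on the overlap $G_{n-1}\cap G_n$. On the same overlap, the induction hypothesis yields $f=K$ a.e. Since $\mu(G_{n-1}\cap G_n)>0$ by the definition of a finite chain, the two a.e.\ constant descriptions of $f$ on a set of positive measure must agree, giving $f_{G_n}=K$. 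Therefore $f=K$ a.e.\ on $G_n$, which closes the induction and yields the conclusion at $n=N$.

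There is no real obstacle here; the only subtle point is ensuring the two null sets (the one where $f\neq K$ on $G_{n-1}$ and the one where $f\neq f_{G_n}$ on $G_n$) are handled correctly inside the positive-measure overlap, which is immediate because the union of two null sets cannot cover a set of positive measure. This is precisely the role of the finite chain property: the positivity of $\mu(G_{n-1}\cap G_n)$ at each link is what lets the constant value propagate from $G_0$ to $G_N$.
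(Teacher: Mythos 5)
Your proof is correct and follows essentially the same argument as the paper's: both extract from $\|f\|_{\BMOp}=0$ that $f=f_G$ a.e.\ on each generator $G$, then use the positive-measure overlaps along the chain to force the consecutive means to agree, the only cosmetic difference being that you package the propagation as an explicit induction while the paper chains the equalities $f_{G_N}=f_{G_{N-1}}=\cdots=f_{G_0}=K$ directly. No gaps.
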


\begin{proof}
For any $G\in\cG$, the assumption that $\|f\|_{\BMOp} = 0$ implies that
$$
\fint_G\! |f-f_G|^p \, d\mu = 0\,,
$$
which implies that $f = f_G$ almost everywhere on $G$. Applying this to $G_0$, it follows that $f=f_{G_0}$ almost everywhere on $G_0$. As the same is true with $K$ in place of $f_{G_0}$, by assumption, and $\mu(G_0)>0$, it must be that $f_{G_0}=K$. 

Applying this to $G_{k-1}$ and $G_{k}$, $1\leq k\leq N$, it follows that $f=f_{G_{k-1}}$ almost everywhere on $G_{k-1}$ and $f=f_{G_{k}}$ almost everywhere on $G_{k}$. On the intersection $G_{k-1}\cap G_{k}$, which has strictly positive measure by the definition of a finite chain, both $f=f_{G_{k-1}}$ and $f=f_{G_{k}}$ must hold almost everywhere. Hence, $f_{G_{k-1}}=f_{G_k}$. 

As this is true for all $1\leq k\leq N$, it follows that $f_{G_N}=f_{G_{N-1}}=\ldots=f_{G_0}=K$, and so $f=K$ almost everywhere on $G_N$. 
\end{proof}

\begin{definition}\label{deffinitelyconnected}
Let $\om$ be a decomposition of $\X$. We say that a pair $(X, \widetilde{X})$ from $\om$ is finitely connected if there exists a finite subset $\{ X_n \}_{n=0}^N \subset \om$ such that
\begin{itemize}
\item $X_0=X$, $X_N = \widetilde{X}$, and
\item for all $1\leq n\leq N$, there exists a $G_n \in \cG$ such that $\mu ( G_n \cap X_n ) > 0$ and $\mu ( G_n \cap X_{n-1} ) > 0$.
\end{itemize}
We say that $\om$ is pairwise finitely connected if every pair $(X, \widetilde{X})$ from $\om$ such that $\mu (X) > 0$ and $\mu (\widetilde{X}) > 0$ is finitely connected.
\end{definition}

This definition is analogous to the FCP, but at the level of the decomposition $\om$. This property is also symmetric and transitive, and it tells us that for each $(X, \widetilde{X})$ from $\om$ with $\mu (X) > 0$ and $\mu (\widetilde{X}) > 0$, one can build a finite chain between them, where each $X_n$ is ``attached'' to the next by a generator $G_n$. One can imagine the sets $\{ X_n \}_{n=0}^N$ as sheets of paper and the sets $\{ G_n \}_{n=1}^N$ as pieces of tape. When two sheets of paper are finitely connected, it means that we can tape the first to the second, the second to the third, and so on until we get to the last one.

\begin{lemma}\label{lemmanotnorm}
If there exists a set $E \in \mathcal{M}$ such that $\mu ( E ) > 0$ and $\mu (G \cap E ) = 0$ for every $G \in \cG$, then $\|\cdot\|_{\BMOp}$ is not a norm modulo constants on $\BMOp$.
\end{lemma}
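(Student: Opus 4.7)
The natural candidate for a witness is the indicator function $f=\chi_E$. The plan is to verify that this $f$ lies in $\BMOp$ with $\|f\|_{\BMOp}=0$, yet is not almost-everywhere constant on $X$; this directly negates \eqref{normhomogeneity}.

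First, I would check that $f\in\Lploc(\X,\cG)$: for any $G\in\cG$, the hypothesis $\mu(G\cap E)=0$ gives
$$
\int_G |f|^p\,d\mu = \mu(G\cap E) = 0 < \infty.
$$
Next, I would compute the mean oscillation on any $G\in\cG$. Since $f=\chi_E$ vanishes outside $E$ and $\mu(G\cap E)=0$, we have $f=0$ almost everywhere on $G$, so $f_G=0$ and
$$
\fint_G|f-f_G|^p\,d\mu = \fint_G|\chi_E|^p\,d\mu = \frac{\mu(G\cap E)}{\mu(G)}=0.
$$
Taking the supremum over $\cG$ yields $\|f\|_{\BMOp}=0$. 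So $f\in\BMOp$.

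The main remaining (and actually the only subtle) point is to show $f$ is not constant almost everywhere on $X$. Since $f=1$ on $E$ and $\mu(E)>0$, a constant-a.e. representative would have to equal $1$ a.e., i.e.\ we would need $\mu(X\setminus E)=0$. Here is where the assumption that each $G\in\cG$ has positive measure enters: if $\mu(X\setminus E)=0$, then for any fixed $G\in\cG$,
$$
\mu(G) = \mu(G\cap E) + \mu(G\setminus E) \leq 0 + \mu(X\setminus E) = 0,
$$
contradicting $G\in\cM^{\ast}$. Therefore $\mu(X\setminus E)>0$ as well, and $f$ takes the values $0$ and $1$ each on a set of positive measure, so it is not equivalent to any constant modulo null sets. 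This exhibits a nonzero element of $\BMOp$ (modulo constants) on which the seminorm vanishes, proving that $\|\cdot\|_{\BMOp}$ fails to be a norm modulo constants.

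The argument is short and the only step that requires any thought is ruling out $\mu(X\setminus E)=0$; this step is exactly where the assumption that generators have strictly positive measure (together with the fact that $\cG$ covers $X$) does the work. No use of $\sigma$-decomposability, the finite chain property, or the Banach space structure is needed for this lemma.
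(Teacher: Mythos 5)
Your proof is correct and follows essentially the same route as the paper: both take $f=\chi_E$, observe that $\mu(E^c)>0$ (since otherwise every generator would be null), and conclude $\|f\|_{\BMOp}=0$ while $f$ is not a.e.\ constant. The only cosmetic difference is that you compute the oscillation by noting $f=0$ a.e.\ on each $G$, whereas the paper writes out the general two-term formula for the $p$-oscillation of an indicator; both evaluate to zero under the hypothesis $\mu(G\cap E)=0$.
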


\begin{proof}
Fix such a measurable set $E$ and let $f = \chi_E$. Note that $\mu (E^c) > 0$; otherwise, $\mu (G \cap E ) = 0$ would imply that $\mu (G ) = 0$ for every $G \in \cG$. As such, $f$ does not equal a constant almost everywhere on $X$. However, for every $G\in\cG$,
$$
\fint_G \! |f-f_G|^p \, d\mu =  \frac{\mu (G \cap E )\mu (G \cap E^c )^p+\mu(G\cap E)^p\mu(G\cap E^c)}{\mu(G)^{p+1}}=0\,,
$$
implying that $\|f\|_{\BMOp} = 0$. This shows that \eqref{normhomogeneity} does not hold and so $\|\cdot\|_{\BMOp}$ is not a norm modulo constants on $\BMOp$.
\end{proof}

\begin{lemma}\label{decompositionsfinitelyconn}
If $\|\cdot\|_{\BMOp}$ is a norm modulo constants on $\BMOp$, then every decomposition of $\X$ into sets of strictly positive measure is pairwise finitely connected.
\end{lemma}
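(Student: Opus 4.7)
The plan is to proceed by contrapositive: supposing a decomposition $\om$ of $\X$ into sets of strictly positive measure which is \emph{not} pairwise finitely connected, I will produce a function $f \in \BMOp$ with $\|f\|_{\BMOp}=0$ that is not a.e.\ constant on $X$, thereby violating \eqref{normhomogeneity}.

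First, I would introduce the relation $\omega \sim \omega'$ on $\Omega$ meaning that $X_\omega$ and $X_{\omega'}$ are finitely connected in the sense of Definition \ref{deffinitelyconnected}. This is an equivalence relation: reflexivity uses the trivial chain of length zero; symmetry is obtained by reversing a chain together with the order of its generators; transitivity by concatenation. The failure of pairwise finite connectedness supplies $\omega_0, \omega_0' \in \Omega$ with $\omega_0 \not\sim \omega_0'$, so there are at least two equivalence classes. Fix one, let $\Omega_0 \subsetneq \Omega$ denote its index set, and set
$$E = \bigcup_{\omega \in \Omega_0} X_\omega.$$
Condition (b) of Definition \ref{decomposable} shows $E \in \cM$ since $E \cap X_\omega$ is either $X_\omega$ or $\emptyset$. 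Both $E$ and $E^c$ have strictly positive measure because each contains some $X_\omega$ from a nonempty equivalence class, and every $X_\omega$ has positive measure by hypothesis.

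The main step is a dichotomy: I claim that for every $G \in \cG$, the set $\Omega_G := \{\omega \in \Omega : \mu(G \cap X_\omega) > 0\}$ lies within a single equivalence class. Indeed, any two $\omega, \omega' \in \Omega_G$ are linked by the chain $X_0 = X_\omega$, $X_1 = X_{\omega'}$ with connecting generator $G_1 = G$, directly matching Definition \ref{deffinitelyconnected}. Proposition \ref{finitedecomposition} ensures $\Omega_G$ is countable, and condition (c) of Definition \ref{decomposable} then gives
$$\mu(G \cap E) = \sum_{\omega \in \Omega_G \cap \Omega_0} \mu(G \cap X_\omega), \qquad \mu(G \cap E^c) = \sum_{\omega \in \Omega_G \setminus \Omega_0} \mu(G \cap X_\omega),$$
exactly one of which vanishes according as $\Omega_G \subseteq \Omega_0$ or $\Omega_G \cap \Omega_0 = \emptyset$.

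Taking $f = \chi_E$, this dichotomy forces $f$ to be a.e.\ constant (equal to $0$ or $1$) on every $G \in \cG$, so $\fint_G|f-f_G|^p\, d\mu = 0$ for all $G$ and hence $\|f\|_{\BMOp} = 0$; yet $f$ is not a.e.\ constant on $X$ since $\mu(E), \mu(E^c) > 0$, contradicting that $\|\cdot\|_{\BMOp}$ is a norm modulo constants. The most delicate point is the handling of the potentially uncountable union defining $E$: its measurability must be secured via axiom (b), and the measure computations of $G \cap E$ and $G \cap E^c$ must be expressed as countable sums through Proposition \ref{finitedecomposition}, rather than via a naive union-of-classes argument.
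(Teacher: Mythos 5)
Your proposal is correct and follows essentially the same route as the paper's proof: form the union $E$ of one finite-connectivity class of the decomposition, show that every generator $G$ satisfies $\mu(G\cap E)=0$ or $\mu(G\cap E^c)=0$ because a single $G$ meeting two pieces in positive measure directly connects them, and then use $\chi_E$ to violate \eqref{normhomogeneity}. The only (cosmetic) differences are that you justify the dichotomy via Proposition \ref{finitedecomposition} and countable sums where the paper invokes Proposition \ref{nullsets}, and you get reflexivity from a trivial chain rather than from Lemma \ref{lemmanotnorm}.
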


\begin{proof}
Assume that there exists a decomposition $\{ X_{\omega} \}_{\omega \in \Omega}$ of $\X$ into sets of strictly positive measure for which there is a pair $(X_a, X_b)$ that is not finitely connected.

Let $\Omega_{a} = \{ \omega \in \Omega : X_{\omega} \; \mathrm{is \; finitely \; connected \; to} \; X_a \}$ and write 
$$
A = \bigcup\limits_{\omega \in \Omega_a} X_{\omega}\,.
$$
Consider also $\Omega_a^c=\Omega\setminus \Omega_a$ and $A^c=X\setminus A$. 

By Lemma \ref{lemmanotnorm}, since $\mu(X_a)>0$, there exists a $G\in\cG$ such that $\mu(G\cap X_a)>0$. This shows that, under these conditions, finite connectivity is reflexive; i.e.\ $X_a$ is finitely connected to itself. It follows that $\Omega_a$ is nonempty and so $\mu(A)\geq \mu(X_a)>0$. On the other hand, our assumption that $X_b$ cannot be finitely connected to $X_a$ implies that $\Omega_a^c$ is nonempty, as well, and so $\mu(A^c)\geq \mu(X_b)>0$. 

Assume now that there exists a $G \in \cG$ such that $\mu ( G \cap A ) > 0$ and $\mu ( G \cap A^c ) > 0$. Then, by Proposition \ref{nullsets}, there must exist some $\alpha \in \Omega_a$ and some $\beta \in \Omega_a^c$ such that $\mu (G \cap X_{\alpha} ) >0$ and $\mu ( G \cap X_{\beta} ) > 0$. This implies that $\beta \in \Omega_a$, which is a contradiction.

Thus, either $\mu (G \cap A ) = 0$ or $\mu ( G \cap A^c ) = 0$ for all $G \in \cG$. Consider $f=\chi_A$. As $\mu(A)>0$ and $\mu(A^c)>0$, this function is not equal to a constant almost everywhere. However, the proof of Lemma \ref{lemmanotnorm} gives $\|f\|_{\BMOp} = 0$. Thus, this function violates \eqref{normhomogeneity}, showing that $\|\cdot\|_{\BMOp}$ is not a norm modulo constants.
\end{proof}

\begin{lemma}\label{norm+sigma=FCP}
If $\|\cdot\|_{\BMOp}$ is a norm modulo constants on $\BMOp$ and $(\X,\cG)$ is $\sigma$-decomposable, then $(\X,\cG)$ has the FCP.
\end{lemma}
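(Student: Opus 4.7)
The plan is to combine the two hypotheses: $\sigma$-decomposability, via Proposition \ref{eomega}, provides a decomposition subordinate to $\cG$, while the norm hypothesis, via Lemma \ref{decompositionsfinitelyconn}, guarantees that any decomposition into strictly positive measure pieces is pairwise finitely connected. The idea is then to lift this partition-level connectivity to a chain at the level of $\cG$ itself.

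First I would invoke Proposition \ref{eomega} to obtain a subordinate decomposition $\cE = \{E_0\} \cup \{E_\omega\}_{\omega\in\Omega}$ in which $E_0$ is null and each $E_\omega \in \cM^*$ is contained in some fixed $G_\omega \in \cG$. Since Lemma \ref{decompositionsfinitelyconn} requires all pieces of the decomposition to have strictly positive measure, I would absorb the null set $E_0$ into a single $E_{\omega_0}$, producing a modified decomposition $\cE' = \{E'_\omega\}_{\omega\in\Omega}$ whose elements all have strictly positive finite measure and each of which differs from the corresponding $E_\omega$ by at most a null set. Lemma \ref{decompositionsfinitelyconn} applied to $\cE'$ then yields pairwise finite connectedness.

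Next, I would fix arbitrary $G, G' \in \cG$ and, using $(c)$ of Definition \ref{decomposable} together with $\mu(E_0) = 0$ and $\mu(G), \mu(G') > 0$, extract indices $a, b \in \Omega$ with $\mu(G \cap E_a) > 0$ and $\mu(G' \cap E_b) > 0$. Applying pairwise finite connectedness to the pair $(E'_a, E'_b)$ in $\cE'$ gives a chain $Y_0, \ldots, Y_M \in \cE'$ with $Y_0 = E'_a$, $Y_M = E'_b$, and generators $H_1, \ldots, H_M \in \cG$ satisfying $\mu(H_n \cap Y_{n-1}) > 0$ and $\mu(H_n \cap Y_n) > 0$ for every $1 \leq n \leq M$. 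Since each $Y_n$ coincides with its core $E_{\omega_n}$ modulo a null set, and $E_{\omega_n} \subseteq G_{\omega_n}$, I obtain $\mu(H_n \cap G_{\omega_n}) \geq \mu(H_n \cap E_{\omega_n}) = \mu(H_n \cap Y_n) > 0$, and analogously for the other side. The same reasoning yields $\mu(G \cap G_{\omega_0}) > 0$ and $\mu(G' \cap G_{\omega_M}) > 0$, so
$$
G,\ G_{\omega_0},\ H_1,\ G_{\omega_1},\ H_2,\ \ldots,\ H_M,\ G_{\omega_M},\ G'
$$
is the desired finite chain in $\cG$ from $G$ to $G'$.

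The main subtlety I anticipate is the bookkeeping around the null set $E_0$ produced by Proposition \ref{eomega}: a strictly-positive-measure decomposition is required to invoke Lemma \ref{decompositionsfinitelyconn}, yet the subordination property $E_\omega \subseteq G_\omega$ must be preserved in order to translate a chain between partition pieces into a chain between generators. The absorption trick addresses this cleanly because altering one partition element by a null set has no effect on any of the measure computations that drive the argument.
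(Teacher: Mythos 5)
Your proposal is correct and follows essentially the same route as the paper: obtain a subordinate decomposition from Proposition \ref{eomega}, apply Lemma \ref{decompositionsfinitelyconn} to get pairwise finite connectedness of the partition pieces, and use the inclusions $E_\omega \subset G_\omega$ together with transitivity to lift the partition-level chain to a finite chain in $\cG$. Your explicit handling of the null piece $E_0$ and the written-out chain $G, G_{\omega_0}, H_1, \ldots, G_{\omega_M}, G'$ are just a more careful rendering of details the paper leaves implicit.
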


\begin{proof}
Consider a decomposition $\cE$ of $\X$ subordinate to $\cG$, which exists by Proposition \ref{eomega}. It follows from Lemma \ref{decompositionsfinitelyconn} that if $\|\cdot\|_{\BMO}$ is a norm on $\BMO$, then for every pair $( E , \widetilde{E} )$ from $\{ E_{\omega} \}_{\omega \in \Omega}$, there exists a finite subset $\{ E_{{\omega}_n} \}_{n=0}^N \subset \{ E_{\omega} \}_{\omega \in \Omega}$ such that $E_{\omega_0}=E$ and $E_{\omega_{N}}=\widetilde{E}$ and for every $1\leq n\leq N$ there exists a $G_{n} \in \cG$ such that $\mu (G_{n} \cap E_{{\omega}_n} ) > 0$ and $\mu (G_{n} \cap E_{{\omega}_{n-1}} ) > 0$. 

Since $\cE$ is subordinate to $\cG$, every $E_{\omega_n}$ is contained in some $G_{\omega_n}\in\cG$. This gives us a finite chain between any pair of sets in $\{ G_{\omega} \}_{\omega \in \Omega}$. Since $\{ E_{\omega} \}_{\omega \in \Omega}$ is a decomposition, then we get that for any set $G$, there exists an $E_{\omega}$ such that $\mu ( G \cap E_{\omega} )>0$, and so $\mu ( G \cap G_{\omega} ) \geq \mu ( G \cap E_{\omega} ) > 0$. Therefore by transitivity, it follows that any pair of sets in $\cG$ is connected by a finite chain, and so $\X$ has the FCP.
\end{proof}

The following two propositions, when combined, show that (i) implies (iii) in Theorem \ref{completeness}. 

\begin{proposition}\label{normFCPsigma1}
If $(\X,\cG)$ has the FCP and is $\sigma$-decomposable, then $\|\cdot\|_{\BMOp}$ is a norm modulo constants on $\BMOp$. 
\end{proposition}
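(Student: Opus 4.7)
The plan is to show \eqref{normhomogeneity}: starting from $f \in \BMOp$ with $\|f\|_{\BMOp}=0$, I want to conclude that $f$ equals a single constant almost everywhere on all of $X$. The strategy is to first pin down the value of $f$ on one generator, then propagate this value to every generator via the FCP and Lemma \ref{FCPconstant}, and finally assemble these local conclusions into a global one using the subordinate decomposition from Proposition \ref{eomega}.

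First I would invoke $\sigma$-decomposability together with Proposition \ref{eomega} to fix a decomposition $\cE=\{E_0\}\cup\{E_\omega\}_{\omega\in\Omega}$ subordinate to $\cG$: $E_0$ is null, each $E_\omega\in\cM^*$, and for each $\omega$ there is a generator $G_\omega\in\cG$ with $E_\omega\subset G_\omega$. Since $\|f\|_{\BMOp}=0$ forces $\fint_G|f-f_G|^p\,d\mu=0$ for every $G\in\cG$, one has $f=f_G$ almost everywhere on each $G\in\cG$ (the standard fact used in Lemma \ref{FCPconstant}). Pick once and for all a base generator $G_\ast\in\cG$, set $K=f_{G_\ast}$, so $f=K$ a.e.\ on $G_\ast$.

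Next I would fix an arbitrary $\omega\in\Omega$ and use the FCP to produce a finite chain $\{G_n\}_{n=0}^N$ with $G_0=G_\ast$ and $G_N=G_\omega$. Lemma \ref{FCPconstant} then gives $f=K$ a.e.\ on $G_\omega$, and since $E_\omega\subset G_\omega$, also $f=K$ a.e.\ on $E_\omega$. Let $N^\omega=\{x\in E_\omega:f(x)\neq K\}$; this is a null subset of $E_\omega$.

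Finally I would glue these local null sets together. Since $\cE$ is a decomposition of $\X$, the set
\[
N=\{x\in X:f(x)\neq K\}=(N\cap E_0)\cup\bigcup_{\omega\in\Omega}N^\omega
\]
has $N\cap E_0$ null by completeness of $\X$, and Proposition \ref{nullsets} applied to the family $\{N^\omega\}_{\omega\in\Omega}$ shows that the remaining union is measurable of measure zero. Hence $\mu(N)=0$, i.e.\ $f=K$ a.e.\ on $X$, establishing \eqref{normhomogeneity} and thus that $\|\cdot\|_{\BMOp}$ is a norm modulo constants. The only step requiring any care is the last one: without $\sigma$-decomposability the uncountable union of null sets $N^\omega$ need not be null or even measurable, which is precisely why Proposition \ref{nullsets} (and, underlying it, the subordinate decomposition from Proposition \ref{eomega}) is indispensable here.
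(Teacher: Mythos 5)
Your proposal is correct and follows essentially the same route as the paper: fix a subordinate decomposition via Proposition \ref{eomega}, observe that $\|f\|_{\BMOp}=0$ forces $f=f_G$ a.e.\ on each generator, propagate the constant along finite chains using Lemma \ref{FCPconstant}, and conclude via the $\{E_\omega\}$ covering $X$ up to a null set. Your explicit appeal to Proposition \ref{nullsets} in the final gluing step is a point the paper leaves implicit, but it is the same argument.
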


\begin{proof}
By $\sigma$-decomposability, we may consider a decomposition $\cE$ subordinate to $\cG$ and a collection $\{ G_{\omega} \}_{\omega \in \Omega}\subset\cG$ satisfying $E_\omega\subset G_\omega$ for all $\omega\in\Omega$. Let $f\in\BMOp$ satisfy $\|f\|_{\BMOp} = 0$. Then, for any $G\in\cG$, $f$ is constant almost everywhere on $G$. Applying this to some $G_0 \in \{ G_{\omega} \}_{\omega \in \Omega}$, we have that $f=f_{G_0}$ almost everywhere on $G_0$. From Proposition \ref{FCPconstant}, since $(\X,\cG)$ has the FCP, $f = f_{G_0}$ a.e. on every $G_{\omega}$, $\omega \in \Omega$. Since every $E_{\omega}$ is contained in some $G_{\omega}$, and the $\{E_\omega\}$ cover $X$ except for a set of measure zero, it follows that $f=f_{G_0}$ a.e. on $X$. As this shows \eqref{normhomogeneity}, $\|\cdot\|_{\BMOp}$ is a norm modulo constants on $\BMOp$.
\end{proof}

\begin{proposition}\label{completeFCPsigma}
If $(\X,\cG)$ has the FCP and is $\sigma$-decomposable, then $\BMOp$ is complete with respect to $\|\cdot\|_{\BMOp}$.
\end{proposition}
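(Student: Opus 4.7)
The plan is to take a Cauchy sequence $\{f_k\} \subset \BMOp$ and construct a limit by piecing together local $L^p$ limits on each generator, using the FCP to control local means and $\sigma$-decomposability to assemble them into a function defined almost everywhere on $X$.

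First, I would fix a reference generator $G^* \in \cG$ and, using the invariance of $\|\cdot\|_{\BMOp}$ under addition of constants, replace each $f_k$ by $f_k - (f_k)_{G^*}$ so that $(f_k)_{G^*}=0$ for every $k$. The key observation is the subset-average estimate: if $E\subset G$ with $\mu(E)>0$, then H\"older's inequality yields
$$
|f_G - f_E| \leq \left(\frac{\mu(G)}{\mu(E)}\right)^{1/p} \|f\|_{\BMOp}.
$$
Applying this along a finite chain $G^* = G_0, G_1, \ldots, G_N = G$ (furnished by the FCP), with $E_n = G_{n-1} \cap G_n$ of strictly positive measure, yields a constant $C_G$ depending only on the chosen chain such that $|f_G - f_{G^*}| \leq C_G \|f\|_{\BMOp}$ for every $f \in \BMOp$. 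Specializing to $f = f_k - f_j$ shows that $\{(f_k)_G\}_k$ is Cauchy in $\R$, and combining this with the BMO Cauchy condition then shows $\{f_k\}$ is Cauchy in $L^p(G)$; denote its limit by $f^G$.

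To produce a single global function, I would invoke $\sigma$-decomposability via Proposition \ref{eomega} to fix a decomposition $\{E_0\} \cup \{E_\omega\}_{\omega \in \Omega}$ subordinate to $\cG$ with $E_0$ null and each $E_\omega \subset G_\omega \in \cG$. Setting $f := f^{G_\omega}$ on $E_\omega$ (and arbitrarily on $E_0$) defines $f$ on all of $X$. For a given $G \in \cG$, Proposition \ref{finitedecomposition} ensures that only countably many $E_\omega$ meet $G$ in positive measure, and for each such $\omega$ the uniqueness of $L^p$ limits on $G \cap G_\omega$ forces $f^G = f^{G_\omega}$ almost everywhere on $G \cap E_\omega$. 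Hence $f = f^G$ almost everywhere on each $G$, so in particular $f \in \Lploc(\X,\cG)$.

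Finally, to verify $f \in \BMOp$ and $\|f_k - f\|_{\BMOp} \to 0$, fix $G$ and $k$; since $f_j \to f$ in $L^p(G)$, both the means and the mean-subtracted functions converge in $L^p(G)$, giving
$$
\fint_G |(f_k - f) - (f_k - f)_G|^p \, d\mu = \lim_{j \to \infty} \fint_G |(f_k - f_j) - (f_k - f_j)_G|^p \, d\mu \leq \liminf_{j \to \infty} \|f_k - f_j\|_{\BMOp}^p.
$$
Taking the supremum over $G \in \cG$ and invoking the Cauchy property yields $\|f_k - f\|_{\BMOp} \to 0$, and the triangle inequality gives $f \in \BMOp$. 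The main obstacle I anticipate is the joint bookkeeping of the two hypotheses: the FCP is what upgrades the $L^p$-Cauchy behavior modulo constants to genuine $L^p(G)$-Cauchy behavior on each generator, while $\sigma$-decomposability is what permits passing from the potentially uncountable family of local limits $\{f^G\}$ to a single measurable function on $X$, and ensuring pointwise compatibility of $f^G$ across generators that need not directly intersect is the subtle point bridging the two.
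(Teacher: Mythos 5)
Your proof is correct, and while it follows the same overall skeleton as the paper's argument (local $L^p$ limits on each generator, the FCP to align means, $\sigma$-decomposability to glue the local limits via a subordinate decomposition), the way you handle the constants is genuinely different and arguably cleaner. The paper keeps the mean-zero limits $f^G = \lim_i (f_i - (f_i)_G)$ and then must introduce correction constants $C_G$ defined as telescoping sums $\sum_n C(G_n,G_{n-1})$ along a chain from a fixed $G_{\omega_0}$, verify that $C_G$ is independent of the chosen chain, and prove the overlap compatibility $f^G + C_G = f^{G'} + C_{G'}$ before assembling $f = \sum_\omega (f^{G_\omega}+C_{G_\omega})\chi_{E_\omega}$. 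You instead normalize once, setting $(f_k)_{G^*}=0$, and use the quantitative H\"older estimate $|f_G - f_E| \leq (\mu(G)/\mu(E))^{1/p}\|f\|_{\BMOp}$ along a chain to show that the means $(f_k)_G$ themselves converge, so that $f_k$ converges in $\Lp(G)$ outright; the chain-independence and overlap-compatibility issues then evaporate, being subsumed by uniqueness of $\Lp$ limits. What the paper's version buys is that it never needs a quantitative bound on $|f_G - f_{G^*}|$ (only qualitative convergence of differences of means on overlaps), but your estimate is valid and the trade is favorable: the resulting proof is shorter and the gluing step is automatic. The one point worth making explicit in a final write-up is the measurability of the glued function $f$, which follows from property (b) of Definition \ref{decomposable} applied to the sets $\{f>t\}$; this is implicit in both arguments.
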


\begin{proof}
Let $\{f_i\}\subset\BMOp$ be Cauchy with respect to $\|\cdot\|_{\BMOp}$. Then, for every $G \in \cG$,
$$
\fint_G\! |(f_i - f_j ) - ( f_i - f_j )_G|^p \, d\mu = \fint_G\! | ( f_i - ( f_i )_G ) - ( f_j - ( f_j )_G )|^p \, d\mu \rightarrow 0\,,
$$  
as $i,j\rightarrow\infty$. This implies that, for a fixed $G$, $\{ f_i - ( f_i )_G\}$ is Cauchy in $\Lp(G)$, and so converges to a function $f^G\in\Lp(G)$ by completeness. Moreover, this limit function has mean 0.

Now, if we have two generators $G,G'\in\cG$ such that $\mu ( G \cap G' ) > 0$, observe that
\begin{equation}\label{cauchytoconv}
\lim\limits_{i \rightarrow \infty} \left[ ( f_i )_{G} - ( f_i )_{G'} \right ]
= \lim\limits_{i \rightarrow \infty} \left[ ( f_i - ( f_i )_{G'} ) - ( f_i - ( f_i )_{G} ) \right] = f^{G'} - f^{G}
\end{equation}
in $\Lp(G\cap G')$. As $( f_i )_{G} - ( f_i )_{G'}$ is a sequence of constants that is Cauchy in $\Lp(G\cap G' )$, we have that $f^{G} - f^{G'}$ is equal to a constant almost everywhere on $G\cap G'$. We will denote this constant as $C (G, G' )$. Observe that there is an antisymmetry property to these constants: $C(G,G') = - C(G',G)$.

By $\sigma$-decomposability, we may consider a decomposition $\cE$ that is subordinate to $\cG$ and a collection $\{G_{\omega} \}_{\omega \in \Omega}\subset\cG$ satisfying $E_\omega\subset G_\omega$ for all $\omega\in\Omega$. Fix some $\omega_0 \in \Omega$. We use the transitive property of the FCP in order to associate a constant $C_G$ to each $G \in \cG$ that depends only on $G$ and the choice of $\omega_0$. For $G \in \cG$, the FCP gives a finite chain $\{ G_n \}_{n=0}^N$ connecting $G_{\omega_0}$ to $G$, where $G_{\omega_0} = G_0$ and $G = G_N$. Write 
$$
C_G = \sum\limits_{n=0}^N C(G_n , G_{n-1})\,.
$$

First, we show that $C_G$ is well defined, regardless of the chain chosen. Let $\{ G_k \}_{k=0}^K$ be another finite chain connecting $G$ and $G_{\omega_0}$, and write 
$$
C_G^{\ast} = \sum\limits_{k=0}^K C(G_k , G_{k-1})\,.
$$
Since $C_G$ and $C_G^{\ast}$ are limits of telescoping sums, we have that
$$
\sum\limits_{k=0}^K \lim\limits_{i \rightarrow \infty} [(f_i)_{G_k} - (f_i)_{G_{k-1}}] = \lim\limits_{i \rightarrow \infty} [(f_i)_G - (f_i)_{G_{\omega_0}}] = \sum\limits_{n=0}^N \lim\limits_{i \to \infty} [(f_i)_{G_n} - (f_i)_{G_{n-1}}] \,,
$$ 
and so $C_G = C_G^{\ast}$.

Next, we want to show that if $\mu(G \cap G') > 0$, then on $G \cap G'$, 
\begin{equation}\label{limitequality}
f^G + C_G = f^{G'} + C_{G'}
\end{equation}
We show this by using \eqref{cauchytoconv} and introducing a telescoping sum with respect to the finite chains $\{ G_n \}_{n=0}^N$ and $\{ G_n \}_{j=0}^J$, where $G_0 = G_{\omega_0}$ in both chains, $G_N = G$ and $G_J = G'$. On $G \cap G'$,
\[
\begin{split}
f^{G'} - f^G
&= \lim\limits_{i \to \infty} [(f_i)_G - (f_i)_{G'}]
= \sum\limits_{n=0}^N \lim\limits_{i \to \infty} [(f_i)_{G_n} - (f_i)_{G_{n-1}}] + \sum\limits_{j=0}^J \lim\limits_{i \to \infty} [(f_i)_{G_{j-1}} - (f_i)_{G_j}]\\
&= \sum\limits_{n=0}^N \lim\limits_{i \to \infty} [(f_i)_{G_n} - (f_i)_{G_{n-1}}] - \sum\limits_{j=0}^J \lim\limits_{i \to \infty} [(f_i)_{G_j} - (f_i)_{G_{j-1}}]
= C_G - C_{G'}.
\end{split}
\]
This shows \eqref{limitequality}.

Taking $G'=G_{\omega}$ for some $\omega\in\Omega$, we can at last build the limit function:
$$
f = \sum\limits_{\omega \in \Omega} (f^{G_{\omega}} + C_{G_{\omega}}) \chi_{E_{\omega}}\,.
$$
Note that since the sets $\{ E_{\omega} \}_{\omega \in \Omega}$ are disjoint, this uncountable sum is in fact only a finite sum of one element, for any $x \in X$, hence it is well defined.

Next, we show that for $G \in \cG$, for almost every $x \in G$,
\begin{equation}\label{limitfunction}
f(x)-f_G = f^G(x).
\end{equation}
First, since $\mu(G) < \infty$, by Proposition \ref{finitedecomposition}, there exists a countable $\{E_{\omega_k} \}_{k=1}^{\infty}$ such that $\mu (G) = \sum\limits_{k=1}^{\infty} \mu (G \cap E_{\omega_k})$. Therefore, using \eqref{limitequality}, for almost every $x \in G$, we have
\[
\begin{split}
f(x)-f_G
&= \sum\limits_{k=1}^{\infty} (f^{G_{\omega_k}}(x) + C_{G_{\omega_k}}) \chi_{E_{\omega_k}} - \frac{1}{\mu(G)} \sum\limits_{k=1}^{\infty}\left[ \int_{G \cap E_{\omega_k}}\! (f^{G_{\omega_k}}(x) + C_{G_{\omega_k}})\right] d\mu \\
&= \sum\limits_{k=1}^{\infty} (f^G(x) + C_G) \chi_{E_{\omega_k}} - \frac{1}{\mu(G)} \sum\limits_{k=1}^{\infty}\left[ \int_{G \cap E_{\omega_k}}\! (f^G(x) + C_G) \right] d\mu= f^G(x)\,.
\end{split}
\]

Finally, by \eqref{limitfunction} and by the definition of $f^G$ as the limit of $f_i - (f_i)_G$ in $\Lp(G)$, we get that for every $G \in \cG$
\[
\begin{split}
\int_G\! |(f_i-f) - (f_i-f)_G|^p \, d\mu
&= \int_G\! |(f_i - (f_i)_G) - (f - f_G)|^p \, d\mu \\
&= \int_G\! |(f_i - (f_i)_G) - f^G|^p \, d\mu \rightarrow 0 \quad\text{as}\quad i\rightarrow\infty\,,
\end{split}
\]
from where the result follows.
\end{proof}

To finish the proof of Theorem \ref{completeness}, it remains to show that (ii) implies (i). This is addressed in the following proposition.

\begin{proposition}\label{normFCPsigma2}
If $\|\cdot\|_{\BMOp}$ is a norm on $\BMOp$, then $(\X,\cG)$ has the FCP and is $\sigma$-decomposable.
\end{proposition}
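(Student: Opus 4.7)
The plan is to reduce the proposition to proving $\sigma$-decomposability, since once $\sigma$-decomposability is in hand, Lemma \ref{norm+sigma=FCP} immediately yields the FCP. The strategy for showing $\sigma$-decomposability is to start with an arbitrary decomposition $\{X_\alpha\}_{\alpha\in\mathcal{A}}$ of $\X$ and, within each finite-measure cell $X_\alpha$, to extract a maximal measurable subset that \emph{is} essentially covered by countably many generators. The leftover will then be shown to have measure zero, using Lemma \ref{lemmanotnorm} to rule out a positive-measure residue.

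Concretely, for each $\alpha\in\mathcal{A}$, I would define
\[
\cH_\alpha = \Big\{ H\in\cM : H\subset X_\alpha \text{ and } \mu\Big(H\setminus \bigcup_{n} G_n\Big)=0 \text{ for some countable } \{G_n\}\subset\cG \Big\}.
\]
This family is closed under countable unions (combine the countable essential covers). Since $\mu(X_\alpha)<\infty$, the supremum $s_\alpha=\sup_{H\in\cH_\alpha}\mu(H)$ is finite, so choosing a sequence $H_n\in\cH_\alpha$ with $\mu(H_n)\to s_\alpha$ and setting $H_\alpha^\ast=\bigcup_n H_n$ produces a member of $\cH_\alpha$ of measure exactly $s_\alpha$. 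Define $Z_\alpha = X_\alpha\setminus H_\alpha^\ast$. The maximality gives the key dichotomy: for every $G\in\cG$, we must have $\mu(G\cap Z_\alpha)=0$; otherwise the set $H_\alpha^\ast\cup(G\cap Z_\alpha)$ would still lie in $\cH_\alpha$ and have measure strictly larger than $s_\alpha$, a contradiction.

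Now I would patch these local constructions together. Set $Z=\bigcup_{\alpha\in\mathcal{A}} Z_\alpha$. Since $Z\cap X_\alpha=Z_\alpha\in\cM$, property (b) of decomposability gives $Z\in\cM$. For any $G\in\cG$, $G$ has finite measure, so Proposition \ref{finitedecomposition} and Corollary \ref{decomposable2} reduce $\mu(G\cap Z)=\sum_\alpha\mu(G\cap Z_\alpha)$ to a countable sum of zeros; hence $\mu(G\cap Z)=0$ for every generator $G$. Suppose, for contradiction, that $(\X,\cG)$ is not $\sigma$-decomposable; then at least one $X_\alpha$ fails to admit a countable essential subcover, which forces $\mu(Z_\alpha)>0$, whence $\mu(Z)>0$. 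But then Lemma \ref{lemmanotnorm} applies to $Z$ and contradicts the hypothesis that $\|\cdot\|_{\BMOp}$ is a norm modulo constants. Therefore every $X_\alpha$ is essentially covered by countably many generators, giving $\sigma$-decomposability; then the FCP follows from Lemma \ref{norm+sigma=FCP}.

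The main obstacle I anticipate is the verification that the maximization producing $H_\alpha^\ast$ is legitimate without assuming global $\sigma$-finiteness of $\X$; this is resolved precisely because decomposability restricts the maximization to the individual cells $X_\alpha$, which have finite measure. A secondary subtlety is the measurability of the global residue $Z$ and the countable reduction of $\mu(G\cap Z)$, both of which rely critically on properties (b) and (c) of decomposable measure spaces together with Proposition \ref{finitedecomposition}; without these, one would only be able to control countable unions and the argument would not close.
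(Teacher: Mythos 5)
Your proposal is correct and follows essentially the same route as the paper: reduce to $\sigma$-decomposability via Lemma \ref{norm+sigma=FCP}, then within each finite-measure cell run an exhaustion argument to extract a maximal countably-covered subset and use Lemma \ref{lemmanotnorm} to kill the residue. The only cosmetic differences are that the paper maximizes over countable unions of sets $G\cap X_\omega$ rather than over all essentially countably covered measurable subsets, and applies Lemma \ref{lemmanotnorm} cell by cell rather than to the globalized residue $Z$.
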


\begin{proof}
By Lemma \ref{norm+sigma=FCP}, we only need to show $\sigma$-decomposability. Fix a decomposition $\om$ of $\X$. We can assume, without loss of generality, that $\mu ( X_{\omega} ) > 0$ for every $\omega \in \Omega$. For $\omega \in \Omega$,  let $\mathscr{A}_{\omega} = \{ G \in \cG : \mu  ( G \cap X_{\omega} ) > 0 \}$. By Lemma \ref{lemmanotnorm}, $\mathscr{A}_{\omega}\neq \emptyset$. 

Fix some $\omega \in \Omega$ and let $\widetilde{Q}$ denote a countable union of intersections of elements of $\mathscr{A}_{\omega}$ with $X_{\omega}$. Since $\mu ( X_{\omega} ) < \infty$, we know that  $K:=\sup \{ \mu ( \widetilde{Q} ):\widetilde{Q} \subset X_{\omega} \}$ exists. We aim to show that there exists a possibly non-unique maximal set $Q$; that is, a countable union of intersections of elements of $\mathscr{A}_{\omega}$ with $X_{\omega}$ and such that $\mu ( Q ) = K$.

We construct $Q$ as follows. For $i \in \N$, take $\widetilde{Q}_i$ such that $\mu (\widetilde{Q}_i ) > K - \frac{1}{i}$. Then, write 
$$
Q_n = \bigcup\limits_{i=1}^n \widetilde{Q}_i\quad \text{and}\quad Q = \bigcup\limits_{n=1}^{\infty} Q_n\,.
$$
As $Q$ is a countable union of intersections of elements of $\mathscr{A}_{\omega}$ with $X_{\omega}$, $\mu(Q)\leq K$. On the other hand, as the $\{Q_n\}$ form an increasing sequence of measurable sets satisfying $ K - \frac{1}{n}\leq \mu(Q_n)\leq \mu(X_\omega)<\infty$, we know that
$$
\mu(Q)= \lim_{n\rightarrow\infty}\mu(Q_n)\geq \lim_{n\rightarrow\infty}\left(K - \frac{1}{n}\right)=K\,.
$$
Therefore, $\mu(Q)=K$. 

Let $E = X_{\omega} \setminus Q$. If there exists a $G\in\cG$ such that $\mu ( G \cap E) >0$, then $\mu ( ( G \cap X_{\omega} ) \cup Q ) = \mu ((G \cap E) \cup Q) = \mu (G \cap E) + \mu (Q) > K$, which contradicts the maximality of $Q$, and so $\mu ( G \cap E ) = 0$ for every $G \in \cG$.

By Lemma \ref{lemmanotnorm}, if $\mu ( E ) > 0$ then $\|\cdot\|_{\BMOp}$ is not a norm modulo constants on $\BMOp$, which contradicts the hypothesis of the proposition. Thus, $\mu(Q)=\mu(X_{\omega})$. As this is true for any $\omega\in\Omega$, we have that $\cG$ admits a countable essential subcover of $X_{\omega}$, which means that $(\X,\cG)$ is $\sigma$-decomposable.
\end{proof}

Now we present an immediate corollary of Theorem \ref{completeness}.

\begin{corollary}
Let $p\in(1,\infty)$. Then $\BMO$ is a Banach space if and only if $\BMOp$ is a Banach space. 
\end{corollary}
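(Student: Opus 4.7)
The plan is essentially to observe that the corollary is a direct consequence of Theorem \ref{completeness}, since condition (i) of that theorem—namely, that $(\X,\cG)$ has the finite chain property and is $\sigma$-decomposable—depends only on the pair $(\X,\cG)$ and makes no reference to the integrability parameter $p$.

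More precisely, Theorem \ref{completeness} is stated for arbitrary $1 \leq p < \infty$, so I would apply it twice: once with $p=1$, giving the chain of equivalences
\[
\BMO \text{ is a Banach space (mod constants)} \iff (\X,\cG) \text{ has the FCP and is } \sigma\text{-decomposable},
\]
and once with the given $p \in (1,\infty)$, giving
\[
(\X,\cG) \text{ has the FCP and is } \sigma\text{-decomposable} \iff \BMOp \text{ is a Banach space (mod constants)}.
\]
Chaining these two equivalences through the common measure-theoretic condition on $(\X,\cG)$ yields the stated result. Since the corollary is phrased symmetrically in $\BMO$ and $\BMOp$, both directions follow at once.

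There is no real obstacle to overcome here: Theorem \ref{completeness} does all the work, and the point of the corollary is simply to highlight the $p$-independence built into condition (i). For this reason I would keep the proof to a single short paragraph citing Theorem \ref{completeness} and noting that the characterization via the FCP and $\sigma$-decomposability does not involve $p$.
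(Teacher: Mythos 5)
Your proposal is correct and is exactly the paper's argument: the paper's proof also simply notes that $\sigma$-decomposability and the FCP do not depend on $p$ for $p\in[1,\infty)$ and invokes Theorem \ref{completeness}. Nothing further is needed.
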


\begin{proof}
This follows from the fact that $\sigma$-decomposability and the FCP do not depend on $p$, so long as $p \in [1,\infty)$.
\end{proof}

\subsection{Connected topological measure spaces}

This section deviates from the assumptions on the pair $(\X,\cG)$. We consider measure spaces that are compatible with a topology on the set $X$:

\begin{definition}\label{toplocint}
We say that $(\X,\tau)$ is a topological measure space if $\X$ is a measure space and $\tau$ is a topology such that $\cM$ contains the associated Borel $\sigma$-algebra.
\end{definition}

In this setting, it makes sense to require that $\cG$ is comprised of open sets. We denote by $\mathrm{BMO}_{\cG}^p(\X,\tau)$ the BMO space corresponding to the topological measure space $(\X,\tau)$ and the open cover $\cG$.

\begin{theorem}\label{topologytheorem}
Let $(\X,\tau)$ be a topological measure space and $\cG$ an open cover of $X$ that satisfy the following:
\begin{itemize}
\item[(i)] $X$ is connected,
\item[(ii)] $\mu(G_1 \cap G_2) > 0$ for every $G_1,G_2 \in \cG$ such that $G_1 \cap G_2 \neq \emptyset$, and
\item[(iii)] $(X,\tau)$ is Lindel\"{o}f.
\end{itemize}
Then $\mathrm{BMO}_{\cG}^p(\X,\tau)$ is a Banach space.
\end{theorem}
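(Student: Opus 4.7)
The strategy is to reduce to Theorem \ref{completeness}: it suffices to show that, after possibly replacing $\X$ by its completion (which preserves all three hypotheses (i)--(iii)), the pair $(\X,\cG)$ is $\sigma$-decomposable and has the finite chain property. Theorem \ref{completeness} will then deliver both the norm property and completeness of $\mathrm{BMO}^p_{\cG}(\X,\tau)$ modulo constants.

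For $\sigma$-decomposability, I would exploit the Lindel\"of hypothesis directly. Since $\cG$ is an open cover of $X$, there is a countable subcover $\{G_k\}_{k=1}^{\infty}\subset\cG$. Setting $X_1=G_1$ and $X_k=G_k\setminus\bigcup_{j<k}G_j$ for $k\geq 2$ gives a countable measurable partition of $X$ with $\mu(X_k)\leq\mu(G_k)<\infty$, so $\X$ is $\sigma$-finite (hence decomposable, with conditions (a)--(c) of Definition~\ref{decomposable} trivial for a countable partition). Each $X_k$ is contained in the single generator $G_k\in\cG$, so $\{G_k\}$ itself is a countable essential subcover of $X_k$, establishing $\sigma$-decomposability.

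The main point is the finite chain property, which is where I would use connectedness together with hypothesis (ii). Fix $G_0\in\cG$ and define an equivalence relation on $\cG$ by declaring $G\sim G'$ when they can be joined by a finite chain; note reflexivity holds since $\{G\}$ is itself a chain (as $\mu(G\cap G)=\mu(G)>0$). Let
\[
A=\bigcup_{G\sim G_0} G,\qquad B=\bigcup_{G\not\sim G_0} G.
\]
Both $A$ and $B$ are open, and since $\cG$ covers $X$ we have $A\cup B=X$. If there were a point in $A\cap B$, it would lie in some $G_1\sim G_0$ and some $G_2\not\sim G_0$ with $G_1\cap G_2\neq\emptyset$; hypothesis (ii) then forces $\mu(G_1\cap G_2)>0$, so concatenating a chain from $G_0$ to $G_1$ with $G_2$ produces a chain from $G_0$ to $G_2$, contradicting $G_2\not\sim G_0$. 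Hence $A\cap B=\emptyset$. Since $G_0\in A$, connectedness of $X$ forces $B=\emptyset$, meaning every generator is finitely chained to $G_0$; by transitivity (noted after Definition~\ref{defFCP}), this yields the FCP.

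I expect the $\sigma$-decomposability step to be essentially immediate once the reduction is made, so the only subtle point is the use of hypothesis (ii): without it, two generators could be topologically disjoint yet have intersection of measure zero, breaking the translation from open covers to measure-theoretic chains and thus severing the link between topological connectedness and the FCP. With (ii) in hand, the argument above completes the proof by direct appeal to the implication (i)$\Rightarrow$(iii) of Theorem~\ref{completeness}.
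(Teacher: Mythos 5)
Your proposal is correct and follows essentially the same route as the paper: a countable subcover from the Lindel\"of property gives $\sigma$-finiteness and $\sigma$-decomposability, and the connectedness-plus-hypothesis-(ii) argument with the open sets $A$ and $B$ is exactly the paper's proof of the FCP, after which Theorem \ref{completeness} finishes the job. Your version merely spells out the disjointification of the countable subcover and the completeness caveat, which the paper leaves implicit.
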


\begin{proof}
Recall that the Lindel\"{o}f property means that any open cover has a countable subcover. Therefore, $\cG$ has a countable subcover of $X$. From the fact that generators in $\cG$ have positive and finite measure, we get that $\X$ is $\sigma$-finite, hence decomposable, and that $(\X,\cG)$ is $\sigma$-decomposable. 

To show the FCP, fix a $G_0 \in \cG$ and let $\mathscr{A} = \{ G \in \cG: G \; \text{is connected by a finite chain to} \; G_0 \}$, $A = \bigcup\limits_{G \in \mathscr{A}} G$, $\mathscr{B} = \cG \setminus \mathscr{A}$, and $B = \bigcup\limits_{G \in \mathscr{B}} G$. Assume $\mathscr{B} \neq \emptyset$, then $A \cap B \neq \emptyset$, since $X$ is connected. This implies that there exists a set $G_a \in \mathscr{A}$ and a set $G_b \in \mathscr{B}$ such that $G_a \cap G_b \neq \emptyset$. So by (ii), $\mu(G_a \cap G_b) > 0$, which is a contradiction, because this would imply that $G_b$ is connected to $G_0$ by a finite chain, hence that $G_b \in \mathscr{A}$. So we get that $\mathscr{A} = \cG$, hence by the transitivity property of finite chains, we have that $\cG$ has the FCP.

Finally, by Theorem \ref{completeness}, $\mathrm{BMO}_{\cG}^p(\X,\tau)$ is a Banach space.
\end{proof}

Note that if $(\X,\tau)$ is such that $\mu(U)>0$ for every open $U \subset X$, then (ii) is implied, since $G_1 \cap G_2$ is open. Recall that a topology $\tau$ is metrizable if there exists a metric such that the topology $\tau_B$ generated by the open balls of the metric is equivalent to $\tau$, i.e. $U$ is open in $\tau$ if and only if it is open in $\tau_B$. We will use the notation $B(x,r)$ to denote an open ball centred at $x\in{X}$ of radius $r>0$.

\begin{definition}\label{defccc}
We say that a topological space has the countable chain condition (CCC) if any collection of disjoint open sets is at most countable.
\end{definition}

It is a well known fact that in a metric space, CCC, separability, and the Lindel\"{o}f properties are all equivalent, and that any subset also inherits these properties.

\begin{corollary}\label{metriccorollary}
Let $(\X,\tau)$ be a metrizable topological measure space and $\cG$ an open cover of $X$ that satisfy the following:
\begin{enumerate}
\item[(i)] $X$ is connected and
\item[(ii)] $\mu(B) > 0$ for every ball $B \subset X$ in $\tau_B$.
\end{enumerate}
Then $\mathrm{BMO}^p_{\cG}(\X,\tau)$ is a Banach space.
\end{corollary}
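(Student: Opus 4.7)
The plan is to verify the three hypotheses of Theorem \ref{topologytheorem} and then apply it directly. Hypothesis (i) (connectedness of $X$) is immediate from assumption (i) of the corollary. Hypothesis (ii) follows from metrizability together with assumption (ii) of the corollary: if $G_1, G_2 \in \cG$ satisfy $G_1 \cap G_2 \neq \emptyset$, pick $x \in G_1 \cap G_2$; since $\tau = \tau_B$ and $G_1 \cap G_2$ is open, there exists $r > 0$ with $B(x,r) \subset G_1 \cap G_2$, and therefore $\mu(G_1 \cap G_2) \geq \mu(B(x,r)) > 0$.

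The substantive step is verifying hypothesis (iii), that $(X,\tau)$ is Lindel\"{o}f. By the equivalence of CCC, separability, and the Lindel\"{o}f property for metric spaces recalled just before the corollary, it suffices to show $X$ is separable. First I would show that each generator $G \in \cG$ is separable by verifying CCC: given a disjoint family $\{V_\alpha\}$ of nonempty open subsets of $G$, for each $\alpha$ pick $x_\alpha \in V_\alpha$ and $r_\alpha > 0$ with $B(x_\alpha, r_\alpha) \subset V_\alpha$; assumption (ii) gives $\mu(B(x_\alpha, r_\alpha)) > 0$, and since the balls are disjoint with $\mu(G) < \infty$, only countably many such $\alpha$ can occur. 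Hence each $G$ is separable, and every point of $X$ lies in a separable open generator, making $X$ locally separable.

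Invoking the classical result that every connected, locally separable metric space is separable, we conclude that $X$ is separable, hence Lindel\"{o}f, so that Theorem \ref{topologytheorem} applies to yield the Banach space conclusion. The main obstacle is this final passage from local to global separability via connectedness; a self-contained argument builds, starting from a chosen $G_0 \in \cG$, an increasing chain $U_0 \subset U_1 \subset \cdots$ of separable open subsets of $X$, where $U_{n+1}$ is defined as the union of $U_n$ with a countable subcover (extracted from $\cG$) of the separable (hence Lindel\"{o}f) set $\overline{U_n}$. If the chain stabilizes at some stage, the common set contains its closure, is therefore clopen, and equals $X$ by connectedness; in the general case the recursion is extended over countable ordinals, and an analogous clopen conclusion at the stabilization stage furnishes the required countable essential subcover of $X$ by generators.
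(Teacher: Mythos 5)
Your reduction to Theorem \ref{topologytheorem} is the same as the paper's, and the two verifications carrying real content are handled in essentially the same way up to cosmetic differences: hypothesis (ii) via a ball inside the nonempty open intersection, and local separability via the CCC/finite-measure argument (the paper phrases this as ``a non-separable ball has infinite measure, and every point has a small ball inside a generator,'' whereas you show each generator itself is CCC, hence separable; these are interchangeable). The genuine point of divergence is the passage from local to global separability. The paper gives a self-contained construction that terminates after countably many steps: it enlarges a separable ball by adjoining, around each point of a countable dense set, the \emph{maximal} separable ball $B(q,r_q)$, and then uses the dichotomy on $\mathrm{dist}(x,S)$ together with connectedness and the maximality of the radii $r_q$ to contradict local separability. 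You instead invoke the Sierpi\'{n}ski--Alexandroff theorem that a connected, locally separable metric space is separable; that theorem is true and the citation is legitimate (its standard proof uses paracompactness of metric spaces and the countability of the components of the intersection graph of a locally finite refinement by separable open sets), so your main line of argument is sound, and arguably cleaner if one is willing to import that external result.

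Your backup self-contained sketch, however, has a gap precisely where the paper's maximality trick does its work. In the recursion $U_{n+1} = U_n \cup (\text{countable subcover of } \overline{U_n})$ there is no justification that the chain stabilizes at a finite, or even countable, ordinal: at a limit stage $\lambda$ the closure $\overline{U_\lambda}$ can strictly contain $\bigcup_{\alpha<\lambda}\overline{U_\alpha}$, so new points can keep appearing, and if the recursion runs all the way to $\omega_1$ the set $U_{\omega_1}$ is a union of $\aleph_1$ separable sets, which need not be separable; the inductive hypothesis collapses and no clopen set is produced. (Failure to stabilize only yields an uncountable disjoint family of nonempty open sets $U_{\alpha+1}\setminus\overline{U_\alpha}$, i.e.\ a failure of CCC for $X$ --- which is exactly the situation you are trying to rule out, so no contradiction is reached.) To close this you need either the paper's maximal-ball device, which forces termination after $\omega$ steps, or the paracompactness argument underlying the classical theorem you cite.
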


\begin{proof}
First, we claim that if $B_0$ is a non-separable ball, then $\mu(B_0) = \infty$. If $B_0$ is not separable, then it does not possess the CCC, so there exists an uncountable collection of disjoint open sets contained in $B_0$. By (ii), these open sets must have strictly positive measure, and so $\mu(B_0) = \infty$.

Second, we claim that for every $x \in X$, there exists $r>0$ such that $B(x,r)$ is separable. Assume the opposite, that there exists an $x \in X$ such that $B(x,\frac{1}{n})$ is non-separable for every $n \in \N$. Since $\cG$ is an open cover of $X$, there exists a $G \in \cG$ and an $N \in \N$ such that $B(x,\frac{1}{N}) \subset G$. Therefore, by the preceding paragraph and monotonicity, we get that $\mu(G) = \infty$, which contradicts the measure assumption on generators.

Next, we prove that $(X,\tau)$ is Lindel\"{o}f. Assume not. Then, it is not separable. By the preceding paragraph, we can fix a separable ball $B_0$ around which we will build a separable set $S$.

Let $S_0 = B_0$ and $Q_0$ be a fixed countable dense subset of $S_0$. For each $q \in Q_0$, let $r_q = \sup\{r>0: B(q,r) \, \text{is separable}\}$. If $r_q = \infty$ for some $q \in Q_0$, then $X= \bigcup\limits_{n=1}^{\infty} B(q,n)$ is separable. Hence, for every $q \in Q_0$, $r_q < \infty$. Then $B(q,r_q) = \bigcup\limits_{n=1}^{\infty} B(q,r_q-1/n)$ is separable. Now, let $S_1 = \bigcup\limits_{q \in Q_0} B(q,r_q)$. Since it is a countable union, $S_1$ is separable. Fix a countable subset $Q_1$ dense in $S_1$ such that $Q_1 \cap S_0 = Q_0$. Then, let $S_2 = \bigcup\limits_{q \in Q_1} B(q,r_q)$. Repeat this process indefinitely to obtain a separable set $S = \bigcup\limits_{n=0}^{\infty} S_n$. Since $X$ is non separable, then $T=X \setminus S \neq \emptyset$.

Assuming now that dist$(x,S) = \delta_x > 0$ for every $x \in T$, it follows that $T = \bigcup\limits_{x \in T} B(x,\delta_x)$, which is open. Since $S$ is also open, this is a contradiction, because $X$ is connected. Therefore, there exists an $x \in T$ such that dist$(x,S) = 0$, which then implies that $B(x,\frac{1}{2n}) \cap S \neq \emptyset$ for every $n \in \N$. Then, by density, there exists a $q_0 \in B(x,\frac{1}{2n}) \cap \left( \bigcup\limits_{n=0}^{\infty} Q_n \right)$. Therefore, by the construction of $S$, $B(q_0,\frac{1}{2n})$ is non separable, since it contains $x \in T$. By the triangle inequality, $B(q_0,\frac{1}{2n}) \subset B(x,\frac{1}{n})$, which implies that $B(x,\frac{1}{n})$ is non-separable. This contradicts the existence of a separable ball, and so $(X,\tau)$ is Lindel\"{o}f.

Therefore, $(\X,\tau,\cG)$ satisfies all the conditions of Theorem \ref{topologytheorem}.
\end{proof}


\section{Muckenhoupt weight Classes and JNP}

Consider a pair $(\X,\cG)$ comprised of a semifinite measure space $\X$ and a fixed collection of generators $\cG$. As mentioned in the introduction, Muckenhoupt weights with respect to general bases have been well studied. We review some basic facts about such weights before investigating their relationship to $\BMO$. 

\subsection{Muckenhoupt weight classes}

\begin{definition}\label{defap}
We say that a nonnegative measurable function $w$ is an $A_p(\X,\cG)$ weight, $p \in (1,\infty)$, if 
$$
[w]_{A_p} = \sup\limits_{G \in \cG} \left(\fint_G\! w \, d\mu \right) \left(\fint_G \!w^{\frac{-1}{p-1}} \, d\mu \right)^{p-1} < \infty\,.
$$
\end{definition}

Note that if $w\in A_p(\X,\cG)$, then both $w\in \Loneloc(\X,\cG)$ and $w^{\frac{-1}{p-1}}\in \Loneloc(\X,\cG)$. This implies that $0<w<\infty$ almost everywhere. Also, note that the $A_p$ constant $[\cdot]_{A_p}$ is not a norm: by H\"{o}lder's inequality, it is always greater than or equal to 1.

For $f\in\Loneloc(\X,\cG)$, we can consider the weight $w=e^f$. By Jensen's inequality, it satisfies
$$
[w]_{A_p} \leq \sup\limits_{G \in \cG} \left(\fint_G\! e^{f-f_G} \, d\mu \right) \sup\limits_{G \in \cG} \left(\fint_G\! e^{\frac{f_G-f}{p-1}} \, d\mu \right)^{p-1} \leq [w]_{A_p}^2\,.
$$
Applying this to $p=2$ implies the following inequality:
\begin{equation}\label{A2}
([w]_{A_2})^{1/2} \leq \sup\limits_{G \in \cG} \fint_G\! e^{|f-f_G|} \, d\mu \leq 2[w]_{A_2}\,.
\end{equation}
Therefore, the quantity
$$
[f]_{\ast}:=\sup\limits_{G \in \cG} \fint_G \!e^{|f-f_G|} \, d\mu
$$
is finite if and only if $w=e^f\in A_2(\X,\cG)$. This characterization of $\log A_2(\X,\cG)$ is useful in the study of the interaction between Muckenhoupt weights and BMO. An easy consequence is the following. 

\begin{proposition}\label{loga2bmo}
$$\Linfty(\X) \subset \log A_2(\X,\cG) \subset \BMO$$
\end{proposition}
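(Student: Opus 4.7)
\smallskip

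The plan is to establish both inclusions directly from the characterization \eqref{A2} of $\log A_2(\X,\cG)$ via the quantity $[f]_{\ast}$, combined with two elementary pointwise inequalities: $e^{|t|}\le e^{2\|f\|_\infty}$ when $|t|\le 2\|f\|_\infty$, and $s\le e^s$ for $s\ge 0$. Before doing the comparisons, I need to know that the relevant functions lie in $\Loneloc(\X,\cG)$ so that the means $f_G$ are well defined.

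For the first inclusion, suppose $f\in\Linfty(\X)$. Then $f\in\Loneloc(\X,\cG)$ (since each $G\in\cG$ has finite measure) and for any $G\in\cG$ we have $|f-f_G|\le 2\|f\|_{\Linfty}$ almost everywhere on $G$. Consequently
$$
[f]_{\ast}=\sup_{G\in\cG}\fint_G\! e^{|f-f_G|}\,d\mu\le e^{2\|f\|_{\Linfty}}<\infty,
$$
and the right-hand inequality in \eqref{A2} gives $[e^f]_{A_2}\le 2e^{2\|f\|_{\Linfty}}<\infty$, so $e^f\in A_2(\X,\cG)$, i.e.\ $f\in\log A_2(\X,\cG)$.

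For the second inclusion, let $f\in\log A_2(\X,\cG)$, so $f=\log w$ for some $w\in A_2(\X,\cG)$. As noted after Definition~\ref{defap}, $0<w<\infty$ a.e., and both $w$ and $w^{-1}$ lie in $\Loneloc(\X,\cG)$. The elementary bound $|\log x|\le x+x^{-1}$ for $x>0$ then yields $|f|\le w+w^{-1}\in\Loneloc(\X,\cG)$, so $f\in\Loneloc(\X,\cG)$ and $f_G$ is well defined for every $G\in\cG$. Thus $[f]_\ast$ is defined, and by \eqref{A2} it is finite. Applying $s\le e^s$ for $s\ge 0$ with $s=|f-f_G|$ gives
$$
\fint_G\!|f-f_G|\,d\mu\le \fint_G\! e^{|f-f_G|}\,d\mu\le [f]_\ast,
$$
and taking the supremum over $G\in\cG$ produces $\|f\|_{\BMO}\le [f]_\ast<\infty$, so $f\in\BMO$.

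Neither inclusion presents any real obstacle; the only subtlety worth flagging is the verification that $\log A_2(\X,\cG)\subset\Loneloc(\X,\cG)$, which is needed before the characterization \eqref{A2} can even be invoked on elements of $\log A_2(\X,\cG)$. Once that is in place, the proof is simply an assembly of \eqref{A2} with the two elementary pointwise inequalities above.
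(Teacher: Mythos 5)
Your proof is correct and follows essentially the same route as the paper: both inclusions are read off from the characterization \eqref{A2} via $[f]_{\ast}$, using $|f-f_G|\le 2\|f\|_{\Linfty}$ for the first (the paper gets the same bound $e^{2\|f\|_{\infty}}$ via Jensen) and $s\le e^s$ for the second, with your explicit check that $\log w\in\Loneloc(\X,\cG)$ being a welcome detail the paper only asserts. One small slip: to conclude $[e^f]_{A_2}<\infty$ from $[f]_{\ast}<\infty$ you need the \emph{left}-hand inequality of \eqref{A2}, which gives $[e^f]_{A_2}\le [f]_{\ast}^2\le e^{4\|f\|_{\Linfty}}$, not the right-hand one; this does not affect the validity of the inclusion.
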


\begin{proof}
Both inclusions follow from \eqref{A2}. If $f \in \Linfty(\X)$, then Jensen's inequality implies that
$$
\fint_G\! e^{|f-f_G|} \, d\mu \leq \left( \fint_G \!e^{|f|} \, d\mu \right)^2 \leq e^{2\|f\|_{\infty}}\,
$$
for any $G\in \cG$. Hence, $[f]_{*}\leq e^{2\|f\|_{\infty}}$ showing that $f\in \log A_2(\X,\cG)$.

If $w = e^f \in A_2(\X,\cG)$, then $f\in\Loneloc(\X,\cG)$ and
$$
\fint_G\! |f-f_G| \, d\mu \leq \fint_G\! e^{|f-f_G|} \, d\mu
$$
for any $G \in \cG$. Hence, $\|f\|_{\BMO}\leq [f]_{*}$, showing that $f\in\BMO$. 
\end{proof}

There are examples in the literature to show that these inclusions can be proper, see \cite[Section II.3]{spanish}.

We introduce the following notation when $f\in\BMO$ with $\|f\|_{\BMO}\neq{0}$ and $\alpha>0$:
$$
[f]_{\alpha}=\sup\limits_{G \in \cG} \fint_G\! \exp\left({\frac{{\alpha}}{\|f\|_{\BMO}}|f-f_G|}\right) d\mu\,.
$$
With this notation, $[f]_{\ast}$ corresponds to $[f]_{\alpha}$ when $\alpha=\|f\|_{\BMO}$.

\subsection{$A_2$ and the John-Nirenberg property}

We show that for every $f\in \BMO$, having $cf\in\log A_2$ for sufficiently small $c$ is equivalent to $f$ satisfying a John-Nirenberg inequality.

We begin by defining the John-Nirenberg property for the space $\BMO$. 

\begin{definition}\label{MRD}
For any $E \in \cM^{\ast}$ and $f$ measurable, let 
$$
\mu_f(t,E) = \frac{\mu(\{x \in E: |f(x)-f_E|>t \})}{\mu(E)}
$$ 
be called the mean relative distribution function of $f$ with respect to $E$.
\end{definition}

\begin{definition}\label{defWNJP}
We say that $\BMO$ has the weak John-Nirenberg property (WJNP) if there exists $c>0$ such that for every $f \in \BMO$ there is a $C_f>0$ for which
$$
\mu_f(t,G) \leq C_f \exp \left( \frac{-c}{\|f\|_{\BMO}}t \right)\,
$$
holds for every $G \in \cG$ and $t>0$.
\end{definition}

\begin{definition}\label{defJNP}
We say that $\BMO$ has the John-Nirenberg property (JNP) if there exist $c_1, c_2 >0$ such that for every $f \in \BMO$,
$$
\mu_f(t,G) \leq c_1 \exp \left( \frac{-c_2}{\|f\|_{\BMO}}t \right)\,
$$
for every $G \in \cG$ and $t>0$.
\end{definition}

Essentially, the WJNP states that the mean relative distribution function of any $f \in \BMO$ relative to any $G \in \cG$ is exponentially decreasing. The JNP states that this decay is independent of $f$. This decay allows us to relate $\BMOp$ for different values of $p$. Note that there is a trivial relationship available for any pair $(\X,\cG)$: by Jensen's inequality, 
\begin{equation}\label{trivialdirection}
\BMOp \subset \BMO\quad\text{with}\quad\|f\|_{\BMO} \leq \|f\|_{\BMOp}\,.
\end{equation}

\begin{proposition}\label{isomorphism}
$$\mathrm{WJNP} \implies \BMO = \BMOp, \; \text{for every} \; p \in [1,\infty)$$
$$\mathrm{JNP} \implies \BMO \cong \BMOp, \; \text{for every} \; p \in [1,\infty)$$
\end{proposition}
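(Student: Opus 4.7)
The plan is to use the layer-cake representation of the $L^p$ integral in terms of the mean relative distribution function, and then insert the exponential decay provided by WJNP or JNP. The trivial inclusion $\BMOp \subset \BMO$ with $\|f\|_{\BMO} \leq \|f\|_{\BMOp}$ is already noted in \eqref{trivialdirection}, so in both statements the work lies in the reverse direction: showing that membership in $\BMO$ forces membership in $\BMOp$, with the appropriate control on the $\BMOp$ seminorm.

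Concretely, for $f \in \BMO$, $G \in \cG$, and $p \in [1,\infty)$, I would write
\begin{equation*}
\fint_G\! |f-f_G|^p \, d\mu = \frac{p}{\mu(G)} \int_0^\infty\! t^{p-1}\, \mu(\{x \in G:|f(x)-f_G|>t\})\, dt = p\int_0^\infty\! t^{p-1}\, \mu_f(t,G)\, dt.
\end{equation*}
Assuming WJNP, plug in the bound $\mu_f(t,G) \leq C_f \exp(-ct/\|f\|_{\BMO})$ and compute the resulting gamma-type integral, obtaining
\begin{equation*}
\left(\fint_G\! |f-f_G|^p \, d\mu\right)^{1/p} \leq \bigl(C_f\, p\, \Gamma(p)\bigr)^{1/p} \frac{\|f\|_{\BMO}}{c}.
\end{equation*}
The right-hand side is independent of $G$, so taking the supremum shows $f \in \BMOp$, which (together with \eqref{trivialdirection}) yields the set-theoretic equality $\BMO = \BMOp$.

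Under JNP, the same computation applies, but now $C_f = c_1$ and $c = c_2$ are absolute constants, so the bound becomes
\begin{equation*}
\|f\|_{\BMOp} \leq \frac{(c_1 p\, \Gamma(p))^{1/p}}{c_2}\, \|f\|_{\BMO},
\end{equation*}
and combining this with \eqref{trivialdirection} gives two-sided control of $\|\cdot\|_{\BMOp}$ by $\|\cdot\|_{\BMO}$, i.e.\ an equivalence of (semi)norms, hence a topological isomorphism of the normed spaces modulo constants.

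There is no real obstacle here: the proposition is essentially a bookkeeping consequence of the layer-cake formula and the definitions. The only care needed is to distinguish the two conclusions at the level of norms versus sets: under WJNP the constant $C_f$ depends on $f$, so we only get equality of the underlying sets (different $f$'s may sit in $\BMOp$ with very different seminorm sizes), whereas under JNP the constants are uniform in $f$ and the embedding constant $(c_1 p \Gamma(p))^{1/p}/c_2$ is a genuine operator norm, giving the Banach space isomorphism.
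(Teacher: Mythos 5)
Your proposal is correct and follows essentially the same route as the paper: the layer-cake identity $\fint_G |f-f_G|^p\,d\mu = p\int_0^\infty t^{p-1}\mu_f(t,G)\,dt$, the insertion of the exponential bound to get the factor $(C_f\,p\,\Gamma(p))^{1/p}\|f\|_{\BMO}/c$, and the observation that uniformity of the constants under JNP upgrades set equality to a norm equivalence. Nothing further is needed.
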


Here $\BMO = \BMOp$ means equality in terms of sets, while $\BMO \cong \BMOp$ means that the spaces are also isomorphic.

\begin{proof}
Assume that $\BMO$ has the WJNP. Then,
$$
\left( \fint_G\! |f-f_G|^p \, d\mu \right)^{1/p}= \left( p \int_0^{\infty} t^{p-1} \mu_f(t,G) \, dt \right)^{1/p}\leq \frac{\left( C_f p \Gamma (p) \right)^{1/p}}{c} \|f\|_{\BMO}\,.
$$
Combining this with \eqref{trivialdirection}, $\BMO = \BMOp$ with $\|f\|_{\BMO}\leq \|f\|_{\BMOp} \leq K(f,p) \|f\|_{\BMO}$. The constant $K(f,p)$, however, may depend on $f$, so this does not tell us whether the spaces are isomorphic.

On the other hand, if $\BMO$ has the JNP, then we may replace $C_f$ by $c_1$ and $c$ by $c_2$, so that $\|f\|_{\BMO}\leq \|f\|_{\BMOp} \leq K(p) \|f\|_{\BMO}$ holds with constant $K(p) = \frac{\left( c_1 p \Gamma(p) \right)^{1/p}}{c_2}$, independent of $f$. Therefore, $\BMO \cong \BMOp$.
\end{proof}

\begin{corollary}\label{JNPlploc}
If $\BMO$ has the WJNP, then $\BMO \subset \Lploc(\X,\cG)$ for any $p \in (1, \infty)$.
\end{corollary}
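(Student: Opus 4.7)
The plan is to reduce the statement to what has essentially already been established in the proof of Proposition~\ref{isomorphism}. A function $f \in \BMO$ is, by Definition~\ref{defbmo}, already in $\Loneloc(\X,\cG)$, so for any fixed $G \in \cG$ the mean $f_G$ is a well-defined finite real number. The WJNP then lets us control the higher-order mean oscillations in terms of $\|f\|_{\BMO}$.

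First I would fix $G \in \cG$ and apply the layer-cake representation together with the WJNP, exactly as in Proposition~\ref{isomorphism}:
\[
\fint_G\! |f-f_G|^p \, d\mu = p \int_0^{\infty}\! t^{p-1} \mu_f(t,G) \, dt \leq C_f\, p \int_0^\infty\! t^{p-1} \exp\!\left(\tfrac{-c}{\|f\|_{\BMO}}t\right) dt = \frac{C_f\, p\,\Gamma(p)}{c^p}\|f\|_{\BMO}^p,
\]
which is finite. Multiplying through by $\mu(G)<\infty$ yields $\int_G |f-f_G|^p\, d\mu < \infty$.

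Next, I would pass from the mean-oscillation bound to an $L^p$ bound by the elementary inequality $|f|^p \leq 2^{p-1}\bigl(|f-f_G|^p + |f_G|^p\bigr)$. Integrating over $G$,
\[
\int_G\! |f|^p \, d\mu \leq 2^{p-1} \int_G\! |f-f_G|^p \, d\mu + 2^{p-1} \mu(G)\, |f_G|^p\,.
\]
The first summand was just bounded, and the second is finite since $f_G \in \R$ and $\mu(G)<\infty$. Hence $\int_G |f|^p \, d\mu < \infty$. Since $G \in \cG$ was arbitrary, $f \in \Lploc(\X,\cG)$.

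There is no real obstacle here: the only thing that needs care is that the definition of $\BMO$ guarantees $f \in \Loneloc(\X,\cG)$ so that $f_G$ is finite, and that the constant $C_f$ appearing in the WJNP is allowed to depend on $f$ (as it does in Definition~\ref{defWNJP})---this is harmless for the conclusion $f \in \Lploc(\X,\cG)$, which is a pointwise (in $G$) finiteness statement, not a uniform estimate. In particular, one does not need the full JNP for this corollary; the weak version suffices.
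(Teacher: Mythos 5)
Your proof is correct and follows essentially the same route as the paper: Corollary \ref{JNPlploc} is deduced there directly from Proposition \ref{isomorphism} (whose proof contains exactly your layer-cake computation), combined with the fact that $\BMOp\subset\Lploc(\X,\cG)$ by Definition \ref{defbmo}. Your explicit final step via $|f|^p\leq 2^{p-1}(|f-f_G|^p+|f_G|^p)$ just spells out that last inclusion, and your remarks about $C_f$ being allowed to depend on $f$ and about Tonelli justifying the distribution-function identity regardless of a priori finiteness are accurate.
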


Now, we turn to the property of $\BMO$ concerning $A_2$ weights that we will connect to the JNP in Theorem \ref{jnpshs}. 

\begin{definition}\label{defWHS}
We say that $\BMO$ has the weak Helson-Szeg\H{o} property (WHSP) if there exists $c_0 >0$ such that for every $f \in \BMO$, 
$$
\frac{{\alpha}}{\|f\|_{\BMO}}f \in \log A_2(\X,\cG)\,
$$
for every $\alpha \in (0,c_0)$.
\end{definition}

\begin{definition}\label{defSHS}
We say that $\BMO$ has the strong Helson-Szeg\H{o} property (SHSP) if
\begin{enumerate}
\item it has the WHSP with some constant $c_0>0$, and
\item for every $\alpha \in (0,c_0)$, there exists a constant $C_{\alpha} >0$ such that $[f]_{\alpha} \leq C_{\alpha}$ for every $f \in \BMO$.
\end{enumerate}
\end{definition}

Finally, we present the main theorem of the section, relating the John-Nirenberg and Helson-Szeg\H{o} properties. 

\begin{theorem}\label{jnpshs}
$$\mathrm{WHSP} \; \iff \; \mathrm{WJNP}$$
$$\mathrm{SHSP} \; \iff \; \mathrm{JNP}$$
\end{theorem}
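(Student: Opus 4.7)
The plan is to exploit the characterization of $\log A_2(\X,\cG)$ provided by \eqref{A2}, which says that $\frac{\alpha}{\|f\|_{\BMO}} f \in \log A_2(\X,\cG)$ precisely when $[f]_\alpha < \infty$. Thus WHSP amounts to the pointwise (in $f$) finiteness of $[f]_\alpha$ for every $\alpha \in (0,c_0)$, while SHSP amounts to the uniform bound $[f]_\alpha \leq C_\alpha$. I would pass between $[f]_\alpha$ estimates and mean-distribution estimates using the layer-cake formula in one direction and Chebyshev's inequality in the other, with the weak versus strong versions corresponding to whether the constants are allowed to depend on $f$.

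The bridge, obtained by Fubini from $e^u - 1 = \int_0^u e^s\,ds$ followed by a change of variables, is
\[
\fint_G \exp\!\left(\frac{\alpha}{\|f\|_{\BMO}}|f-f_G|\right) d\mu \;=\; 1 + \frac{\alpha}{\|f\|_{\BMO}} \int_0^\infty e^{\alpha t/\|f\|_{\BMO}}\, \mu_f(t,G)\, dt,
\]
valid for every $G \in \cG$, every $\alpha > 0$, and every $f \in \BMO$ with $\|f\|_{\BMO} \neq 0$. The degenerate case $\|f\|_{\BMO} = 0$ is harmless: then $f = f_G$ a.e.\ on every $G \in \cG$, so all four properties hold trivially for such $f$.

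For WJNP $\Rightarrow$ WHSP and JNP $\Rightarrow$ SHSP, I would substitute the (W)JNP estimate $\mu_f(t,G) \leq C_f \exp(-ct/\|f\|_{\BMO})$ into this identity. The resulting integrand $C_f \exp((\alpha-c)t/\|f\|_{\BMO})$ is integrable iff $\alpha < c$, and direct evaluation gives
\[
[f]_\alpha \;\leq\; 1 + \frac{\alpha C_f}{c-\alpha}, \qquad \alpha \in (0,c),
\]
which is WHSP with $c_0 = c$. Under JNP one may replace $C_f$ by the $f$-independent $c_1$ and $c$ by $c_2$, producing the uniform bound $[f]_\alpha \leq 1 + \alpha c_1/(c_2-\alpha) =: C_\alpha$, which is SHSP.

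For the reverse implications WHSP $\Rightarrow$ WJNP and SHSP $\Rightarrow$ JNP, I would apply Chebyshev's inequality to the nonnegative function $\exp(\alpha|f-f_G|/\|f\|_{\BMO})$: for any $\alpha \in (0,c_0)$, any $G \in \cG$, and any $t > 0$,
\[
\mu_f(t,G) \;\leq\; e^{-\alpha t/\|f\|_{\BMO}} \fint_G \exp\!\left(\frac{\alpha|f-f_G|}{\|f\|_{\BMO}}\right) d\mu \;\leq\; [f]_\alpha\, e^{-\alpha t/\|f\|_{\BMO}}.
\]
Fixing any admissible $\alpha$ (for instance $\alpha = c_0/2$) yields WJNP with $c = \alpha$ and $C_f = [f]_\alpha$, which is finite by WHSP; under SHSP, $[f]_\alpha \leq C_\alpha$ is uniform in $f$, yielding JNP with $c_1 = C_\alpha$ and $c_2 = \alpha$. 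No serious obstacle is expected: the argument is symmetric bookkeeping between two equivalent descriptions. The only point requiring care is the matching of the open intervals of admissible $\alpha$ and the loss in constants inherent to the Chebyshev step (which accounts for why the JN constants produced this way are not sharp).
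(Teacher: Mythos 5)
Your proposal is correct and follows essentially the same route as the paper: Chebyshev's inequality applied to $\exp(\alpha|f-f_G|/\|f\|_{\BMO})$ for the Helson-Szeg\H{o}-to-John-Nirenberg direction, and the layer-cake/distribution-function computation (which the paper invokes by citing the classical argument and stating the resulting bound $1+\frac{C_f}{(c/\alpha)-1}$, equal to your $1+\frac{\alpha C_f}{c-\alpha}$) for the converse. Your explicit treatment of the degenerate case $\|f\|_{\BMO}=0$ is a small point the paper leaves implicit, but it does not change the argument.
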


\begin{proof}
By Chebyshev's inequality,
$$
\mu_f(t,G)\leq \exp \left(\frac{-{{\alpha}}}{\|f\|_{\BMO}}t \right)\fint_G\! \exp\left({\frac{{{\alpha}}}{\|f\|_{\BMO}}|f(x)-f_G|}\right) d\mu\,.
$$
From this it follows that if $\BMO$ has the WHSP, then fixing $\alpha \in (0,c_0)$ as in Definition \ref{defWHS} gives 
$$
\mu_f(t,G) \leq [f]_{\alpha} \exp \left( \frac{- \alpha}{\|f\|_{\BMO}}t \right)\,,
$$
which implies the WJNP with $C_f = [f]_{\alpha}$ and $c = \alpha$, which does not depend on $f$.

If $\BMO$ has the SHSP, then fixing $\alpha<c$ gives
$$
\mu_f(t,G) \leq C_{\alpha} \exp \left( \frac{-\alpha}{\|f\|_{\BMO}}t \right)\,,
$$
which implies the JNP with $c_1 = C_{\alpha}$ and $c_2 = {\alpha}$, since $\alpha$ does not depend on $f$.

The proof of the converse directions is identical to those of the corresponding statements for classical BMO and $A_2$ (see \cite[Corollary 3.10 in Section II.3]{spanish}). If $\BMO$ has the WJNP, then
$$
\fint_G\! \exp\left({\frac{\alpha}{\|f\|_{\BMO}}|f-f_G|}\right) d\mu \leq 1+ \frac{C_f}{(c/ \alpha) -1}\,
$$
for any $\alpha \in (0,c)$. Therefore, $\BMO$ has the WHSP with $c = c_0$. Similarly, if $\BMO$ has the JNP, then
$$
\fint_G \! \exp\left({\frac{\alpha}{\|f\|_{\BMO}}|f-f_G|}\right) d\mu \leq 1+ \frac{c_1}{(c_2/ \alpha) -1}$$
for any $\alpha \in (0,c_2)$. Therefore, $\BMO$ has the SHSP with $c = c_2$ and $C_{\alpha} = 1 + \frac{c_1}{(c_2 / \alpha) -1}$.
\end{proof}


\section{Denjoy Families and JNP}
In this section, we describe a large class of collections $\cG$ for which the associated BMO space possesses the John-Nirenberg property (JNP). We call these families Denjoy since their most important property (called the Denjoy doubling property) is taken from an article by Arnaud Denjoy in \cite{denjoy}. His idea was to generalize Vitali's covering theorem without using any notion of distance, only measure-theoretic tools. 

In a subsequent section, we will see how the results here can be used to ascertain the JNP for a number of BMO spaces, notably in $\R^n$ with Lebesgue measure and the collection $\cG$ of all rectangles of arbitrary orientation (see Example \ref{rectangles}), and in spaces of homogeneous type with the collection $\cG$ of all open balls (see Example \ref{homogeneous}). 

\subsection{The setting}

For $x\in{X}$, write $\cG(x)=\{G\in\cG:G\ni x \}$. We say that a sequence $\{G_n\}\subset\cG(x)$ converges to $x$, written $G_n \to x$, if $\mu(G_n) \to 0$. Similarly, we write 
$$
\limsup\limits_{G \to x} \fint_G\! |f| \, d\mu = \lim\limits_{\eps \to 0} \sup\limits_{\mu(G) < \eps} \fint_G\! |f| \, d\mu\,,
$$
where the supremum is taken over all $G \in \cG(x)$ such that $\mu(G) < \eps$.

The outer measure $\mu^{\ast}$ associated to the measure $\mu$ is defined for $A\subset X$ by the formula
$$
\mu^{\ast}(A) = \inf \{\mu(E): A \subset E \in \cM\}\,.
$$ 
Note that $\mu^{\ast}(E) = \mu(E)$ for any $E \in \cM$ and the $\sigma$-algebra of $\mu^*$-measurable sets contains $\cM$.

We will also define a set map $\Omega_a$, $a>1$, for $A\subset X$ by 
$$
\Omega_a(A) = \bigcup \{G \in \cG: G \cap A \neq \emptyset \; \text{and} \; \mu(G) \leq a\mu^{\ast}(A)\}\,.
$$
The notation $\Omega_a^{(j)}(A)$ is used to denote the $j^{th}$ iteration of the map $\Omega_a$. One could interpret $\Omega_a(A)$ as some type of ``fattening" of the set $A$, with respect to $\cG$. 

\begin{definition}\label{double}
Fix $a,b$ such that $1< a \leq b$. We will say that a generator $G' \in \cG$ is an $(a,b)$-Denjoy double of $G \in \cG$ if $\Omega_a(G) \subset G'$ and $\mu(G') \leq b\mu(G)$. Whenever $a$ and $b$ are fixed, we will simply use the term double.
\end{definition}

Note that a generator may have more than one double, but in all our proofs, the choice of double is irrelevant. Therefore when writing $G'$, we will mean some fixed double of $G$.

As an example, if we let $\cG$ be the collection of cubes in $\R^n$ with Lebesgue measure and let $a=2^n$, then we have $\Omega_a(Q) = 5Q$ for every cube $Q$ and so $\mu(\Omega_a(Q)) = 5^n\mu(Q)$. Then, if we let $b=6^n$, $cQ$ is a double of $Q$ for any $c \in [5,6]$.

On the other hand, if we replace cubes by rectangles, then, for any choice of $a>1$, we get that $\Omega_a(R)=\R^n$ for any rectangle $R$ and so $\mu(\Omega_a(R)) = \infty$. Hence, no rectangle has a double. As we will see in Example \ref{rectangles}, although rectangles are not a Denjoy family, we can still use the properties of Denjoy families locally to prove the JNP for $\BMO$, where $\cG$ is the collection of all rectangles.

\begin{definition}\label{defdenjoy}
For a pair $(\X,\cG)$, we will call $\cG$ a Denjoy family when it satisfies the following conditions:
\vskip 1\baselineskip
\begin{itemize}
\item[] {\bf Shrinking Property}\\ For almost every $x \in X$, there exists $G_n \to x$.\\
\item[] {\bf Denjoy Doubling Property}\\ There exist uniform constants $a$ and $b$, with $1 < a \leq b$, such that every $G \in \cG$ has an $(a,b)$-double.\\
\item[] {\bf Growth Property}\\ For any $G_0,G \in \cG$ such that $G \cap G_0 \neq \emptyset$, there exists $j \in \N$ such that 
$\mu^{\ast}\left(\Omega_a^{(j)}(G)\right) \geq \frac{a}{b} \mu(G_0).$\\
\item[] {\bf Weak Differentiation Property}\\ For every $f \in \Loneloc(\X,\cG)$, for almost every $x \in X$, $|f(x)| \leq \limsup\limits_{G \to x} {\displaystyle \fint_G} \!|f| \, d\mu.$
\end{itemize}
\end{definition}

The doubling property is, in a sense, the most restrictive condition, yet the most useful one. As mentioned previously, this property is an analogue to the classical doubling property for spaces of homogeneous type.

The growth property eliminates ``bad" families in which some set $G$ intersects only sets of relatively small or big measures, with respect to the measure of $G_0$. This will allow us to take maximal sets, in the sense defined in Lemma \ref{maximal} below. Notice that if $\mu(G) \geq \frac{a}{b}\mu(G_0)$, then $\mu^{\ast}\left(\Omega_a(G)\right) \geq \mu(G_0)$, since $G \subset \Omega_a(G)$. Therefore, the property is intended to ensure that we can fatten repetitively any set $G$, no matter how small relative to $G_0$.

\subsection{Proof of Theorem \ref{Denjoytheorem}}

The proof of this theorem follows the same strategy as the classical proof of the John-Nirenberg inequality, as presented in \cite[Theorem 3.8 of Chapter II]{spanish}. What needs to be shown, in order for the proof to carry out {\it mutatis mutandis}, are a decomposition of Calder\'{o}n-Zygmund type (Lemma \ref{calderonzygmund}) and a result that follows from it (Lemma \ref{almostJN}).

\begin{lemma}\label{maximal}
Let $\cG$ be a Denjoy family. If $\widetilde{G} \in \cG$ and $\mathscr{F} \subset \{G \in \cG: G \cap \widetilde{G} \neq \emptyset \; \& \; \mu(G) < \frac{a}{b} \mu(\widetilde{G})\}$ is nonempty, then there exists a (possibly non-unique) set $G \in \mathscr{F}$ with a double $G' \notin \mathscr{F}$.
\end{lemma}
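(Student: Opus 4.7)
The plan is to argue by contradiction: assume that every $G \in \mathscr{F}$ has all its $(a,b)$-doubles in $\mathscr{F}$, and build an iterated chain of doubles whose measures eventually violate the growth property applied to $\widetilde{G}$. Pick any $G_0 \in \mathscr{F}$, and for each $k \geq 0$ let $G_{k+1}$ be an $(a,b)$-double of $G_k$, which exists by the Denjoy doubling property. Under our assumption, every $G_k$ lies in $\mathscr{F}$, and in particular $\mu(G_k) < \tfrac{a}{b}\mu(\widetilde{G})$ for all $k$.

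The key technical observation to record is that the set map $\Omega_a$ is monotone: if $A \subset B$, then $\mu^{\ast}(A) \leq \mu^{\ast}(B)$, so any generator used to build $\Omega_a(A)$ is eligible for $\Omega_a(B)$, giving $\Omega_a(A) \subset \Omega_a(B)$. Using this, I would verify by induction that $\Omega_a^{(k)}(G_0) \subset G_k$ for all $k \geq 0$. The base case $k=0$ is trivial. For the step, from the inductive hypothesis $\Omega_a^{(k)}(G_0) \subset G_k$ and monotonicity, $\Omega_a^{(k+1)}(G_0) = \Omega_a(\Omega_a^{(k)}(G_0)) \subset \Omega_a(G_k) \subset G_{k+1}$, where the last inclusion is the definition of $G_{k+1}$ being a double of $G_k$.

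It also needs to be checked that $G_k \cap \widetilde{G} \neq \emptyset$ so that $G_k \in \mathscr{F}$ is a meaningful statement along the whole sequence. Since $G_0 \in \Omega_a(G_0) \subset G_1 \subset \Omega_a(G_1) \subset G_2 \subset \cdots$ (the inclusion $G_k \subset \Omega_a(G_k)$ holds because $G_k$ itself is an admissible generator in the definition of $\Omega_a(G_k)$, as $a > 1$), we get $G_0 \subset G_k$ for every $k$, so $G_k \cap \widetilde{G} \supset G_0 \cap \widetilde{G} \neq \emptyset$.

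Now the growth property, applied with the generators $\widetilde{G}$ and $G_0$ (which intersect because $G_0 \in \mathscr{F}$), produces some $j \in \N$ with $\mu^{\ast}(\Omega_a^{(j)}(G_0)) \geq \tfrac{a}{b}\mu(\widetilde{G})$. But the chain $\Omega_a^{(j)}(G_0) \subset G_j$ combined with monotonicity of outer measure gives $\mu^{\ast}(\Omega_a^{(j)}(G_0)) \leq \mu(G_j) < \tfrac{a}{b}\mu(\widetilde{G})$, a contradiction. Hence some $G_k \in \mathscr{F}$ admits a double $G_{k+1} \notin \mathscr{F}$, proving the lemma. The main point to be careful with is bookkeeping between $\mu$ and $\mu^{\ast}$ and the monotonicity of $\Omega_a$; beyond that the argument is a clean ``iterate until you escape'' contradiction driven entirely by the growth property.
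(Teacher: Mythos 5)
Your proof is correct and takes essentially the same approach as the paper: iterate the doubling construction from an arbitrary $G_0\in\mathscr{F}$, use monotonicity of $\Omega_a$ to establish $\Omega_a^{(k)}(G_0)\subset G_k$, and contradict the growth property applied to the pair $G_0,\widetilde{G}$. The additional bookkeeping you supply (the explicit induction and the check that $G_k\cap\widetilde{G}\neq\emptyset$) merely spells out details the paper leaves implicit.
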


\begin{proof}
Fix an arbitrary $G_0 \in \mathscr{F}$, and denote by $G_1$ a fixed double of $G_0$. If $G_1 \notin \mathscr{F}$, then the proof is done. Otherwise, we have
$$
\mu^{\ast}\left(\Omega^{(1)}_a(G_0)\right) \leq \mu(G_1) < \frac{a}{b} \mu(\widetilde{G})\,.
$$

Note that $\Omega_a$ is monotone with respect to inclusion, so $\Omega_a^{(2)}(G_0) = \Omega \left( \Omega^{(1)}_a(G_0) \right) \subset \Omega_a(G_1)$. Now, consider a sequence of doubles $G_j = (G_{j-1})'$. We cannot have all of the $\{G_j\}$ contained in $\mathscr{F}$, since that would mean 
$$
\mu^{\ast}\left(\Omega^{(j)}_a(G_0)\right) \leq \mu(G_j) < \frac{a}{b} \mu(\widetilde{G})\,
$$
for all $j\in\N$, in violation of the growth property. Hence, at least one of the $G_j$ satisfies the conclusion of the lemma.
\end{proof}

The main idea used here to show a Calder\'{o}n-Zygmund-type decomposition is taken from the proof of Denjoy's covering theorem in \cite{denjoy}. Like Denjoy, as we will see below, we need $a>1$ in order for us to take a relatively big set in a collection, such that all other sets in the collection that intersect it will be contained in the double.

\begin{lemma}[Calder\'{o}n-Zygmund-Type Decomposition]\label{calderonzygmund}
If $\cG$ is a Denjoy family, then there exist $k,K>0$ such that for every $G_0 \in \cG$, $f \in \BMO$ with $\|f\|_{\BMO}=1$, and $\alpha>1$, there is a countable pairwise disjoint collection $\{G_j\}_{j \in \mathcal{J}} \subset \cG$ with doubles $G_j'$ satisfying
\begin{enumerate}
\item[(i)] for almost every $x \in G_0 \setminus \left( \bigcup\limits_{j \in \mathcal{J}} G_j' \right)$, $|f(x)-f_{G_0}| \leq k\alpha$;
\item[(ii)] for every $j \in \mathcal{J}$, $|f_{G_j'} - f_{G_0}| \leq k\alpha$; and,
\item[(iii)] $\sum\limits_{j \in \mathcal{J}} \mu(G_j') \leq \left( \displaystyle{\frac{K}{\alpha}} \right) \mu(G_0)$.
\end{enumerate}
\end{lemma}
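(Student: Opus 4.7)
The plan is to adapt the classical Calder\'on--Zygmund stopping-time argument for BMO, with the dyadic parent of a stopped cube replaced by a Denjoy double of a stopped generator. I would begin by introducing the bad collection
\[
\mathscr{F} = \{G \in \cG : G \cap G_0 \neq \emptyset,\ \mu(G) < \tfrac{a}{b}\mu(G_0),\ \fint_G|f-f_{G_0}|\,d\mu > \alpha\}.
\]
If $\mathscr{F} = \emptyset$, take $\mathcal{J} = \emptyset$: then (ii) and (iii) are vacuous, and for (i), any $x \in G_0$ and any $G \ni x$ with $\mu(G)$ small enough satisfies $G \cap G_0 \neq \emptyset$ and $\mu(G) < \tfrac{a}{b}\mu(G_0)$, so $\fint_G|f-f_{G_0}| \leq \alpha$, and the Weak Differentiation Property yields $|f(x)-f_{G_0}| \leq \alpha$ almost everywhere.

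Assume $\mathscr{F} \neq \emptyset$, and set $\mathscr{F}^* = \{G \in \mathscr{F} : G' \notin \mathscr{F}\}$. The proof of Lemma \ref{maximal} shows, via the Growth Property, that iterating doubles from any $G \in \mathscr{F}$ eventually produces $G^{(j)} \notin \mathscr{F}$ (the measure eventually exceeds $\tfrac{a}{b}\mu(G_0)$), so every $G \in \mathscr{F}$ is contained in some iterated double lying in $\mathscr{F}^*$; it therefore suffices to Vitali-cover $\mathscr{F}^*$. I would do this iteratively: at stage $n$, let $M_n$ be the supremum of $\mu(G)$ over the remaining subfamily of $\mathscr{F}^*$ (so $M_n \leq \tfrac{a}{b}\mu(G_0)$), pick any $G_{n+1}$ with $\mu(G_{n+1}) > M_n/a$, and discard every remaining generator that meets $G_{n+1}$. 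By the Denjoy Doubling Property, any discarded $G$ satisfies $\mu(G) \leq M_n < a\mu(G_{n+1})$ and $G \cap G_{n+1} \neq \emptyset$, hence $G \subset \Omega_a(G_{n+1}) \subset G_{n+1}'$. The selected $\{G_j\}_{j \in \mathcal{J}}$ are pairwise disjoint subsets of $G_0'$ (by Denjoy doubling applied to $G_0$ itself), so $\mathcal{J}$ is countable and $M_n \to 0$, forcing the iteration to exhaust $\mathscr{F}^*$.

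For (i), any $x \in G_0 \setminus \bigcup_j G_j'$ cannot be covered by a small $G \in \mathscr{F}$, since such $G$ would lie inside a $\mathscr{F}^*$-ancestor and hence in some $G_j'$, contradicting $x \notin G_j'$. Thus $\fint_G|f-f_{G_0}| \leq \alpha$ for all small enough $G \ni x$, and the Weak Differentiation Property delivers the pointwise bound. For (iii), disjointness of the $G_j \subset G_0'$ combined with $\fint_{G_j}|f-f_{G_0}| > \alpha$ yields
\[
\alpha \sum_j \mu(G_j) \leq \int_{G_0'}|f-f_{G_0}|\,d\mu \leq \int_{G_0'}|f-f_{G_0'}|\,d\mu + |f_{G_0'}-f_{G_0}|\mu(G_0') \leq b(1+b)\mu(G_0),
\]
using $\|f\|_{\BMO} = 1$ and the BMO chain estimate $|f_{G_0'}-f_{G_0}| \leq b$ from $G_0 \subset G_0'$, $\mu(G_0') \leq b\mu(G_0)$; since $\mu(G_j') \leq b\mu(G_j)$ this gives (iii) with $K = b^2(1+b)$.

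The delicate step is (ii). Each $G_j'$ fails to lie in $\mathscr{F}$ for one of two reasons: either (A) $\fint_{G_j'}|f-f_{G_0}| \leq \alpha$, which gives $|f_{G_j'}-f_{G_0}| \leq \alpha$ at once; or (B) $\mu(G_j') \geq \tfrac{a}{b}\mu(G_0)$, in which case $\mu(G_j) \geq \tfrac{a}{b^2}\mu(G_0)$ and oscillation of $G_j'$ is not available. In Case~(B) I would use a three-step BMO chain:
\[
|f_{G_j'}-f_{G_0}| \leq |f_{G_j'}-f_{G_j}| + |f_{G_j}-f_{G_0'}| + |f_{G_0'}-f_{G_0}| \leq b + \tfrac{b^3}{a} + b,
\]
a constant depending only on $a$ and $b$. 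The hard part is precisely this uniform control in Case~(B); the Denjoy Doubling Property, the normalization $\|f\|_{\BMO} = 1$, and the assumption $\alpha > 1$ combine to yield a $k$ independent of $f$, $G_0$, and $\alpha$, e.g.\ $k = \max(1,\, 2b + b^3/a)$.
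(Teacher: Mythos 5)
Your proof is correct and follows essentially the same route as the paper's: a Vitali-type selection of generators with large mean oscillation, using the growth property (the content of Lemma \ref{maximal}) to guarantee that iterated doubles eventually exit the bad family, the weak differentiation property for the pointwise bound in (i), the dichotomy ``average $\leq\alpha$ or $\mu(G_j')\geq\frac{a}{b}\mu(G_0)$'' for (ii), and disjointness together with $G_j\subset G_0'$ for (iii), arriving at the same $K=b^2(b+1)$ and a comparable $k$. The only organizational difference is that you pre-filter to the subfamily $\mathscr{F}^*$ of sets whose doubles already lie outside $\mathscr{F}$ before running the selection, whereas the paper selects from all of $\mathscr{A}_1$, arranging $G_n'\notin\mathscr{A}_n$ at each step, and afterwards extracts the subcollection with $G_j'\notin\mathscr{A}_1$ via a nesting argument; your version streamlines that bookkeeping but is the same argument.
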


\begin{proof}
Fix an arbitrary $G_0 \in \cG$ and $f \in \BMO$ such that $\|f\|_{\BMO} =1$. We define the following:
\begin{itemize}
\item $h(x)=|f(x)-f_{G_0}|$,
\item $\mathscr{A}_0 = \{G \in \cG: G \cap G_0 \neq \emptyset \; \text{and} \; \mu(G)< \frac{a}{b}\mu(G_0) \}$,
\item $A = \bigcup \{G:G \in \mathscr{A}_0 \}$,
\item  $M_{G_0,f}(x) =
  \begin{cases}
    \sup \left\{\displaystyle{\fint_G\! h \, d\mu} : x \in G \in \mathscr{A}_0 \right\}, & \text{if $x \in A$}\\
     0, & \text{otherwise} \\
  \end{cases}$\\

\item $D_{\alpha} = \{x \in X: M_{G_0,f}(x)>\alpha \}$.
\end{itemize}

Note that by the shrinking property, almost every $x \in G_0$ is in $A$. Also, for any $G \in \mathscr{A}_0$, any double $G'$ is in $\Omega_a(G_0)$, since $\mu(G') \leq b\mu(G)<a\mu(G_0)$. Therefore, we can assume without loss of generality that 
$$
G_0 \subset A \subset \bigcup\limits_{G \in \mathscr{A}_0} G' \subset G_0'\,.
$$
Then, by the weak differentiation property, for almost every $x \in G_0$,
\begin{equation}\label{maxfunction}
h(x) \leq \limsup\limits_{G \to x} \fint_G\! h \, d\mu \leq  M_{G_0,f}(x)\,.
\end{equation}

For $\alpha>1$, let 
$$
\mathscr{A}_1 = \left\{G \in \mathscr{A}_0: \alpha < \fint_G \! h \, d\mu \right\} \quad \text{and} \quad d_1 = \sup\limits_{G \in \mathscr{A}_1} \{\mu(G)\}\,.
$$
We know that $d_1$ is finite, since the measure of the sets in the collection $\mathscr{A}_0$ is bounded by $\frac{a}{b}\mu(G_0)$. Note that $D_{\alpha} = \bigcup\limits_{\mathscr{A}_1} G$ and for every $G \in \mathscr{A}_1$, $\mu(G) < \alpha^{-1}\int_G h \, d\mu$.

Since $a>1$, we can pick $G_1 \in \mathscr{A}_1$ such that $d_1 < a\mu(G_1)$. This implies that if $G \in \mathscr{A}_1$ and $G\cap G_1 \neq \emptyset$, then $G \subset G_1'$. Furthermore, since the collection $\mathscr{F}=\mathscr{A}_1 \cap \{G \in \cG: \mu(G)>d_1/a\}$ satisfies the hypotheses of Lemma \ref{maximal}, we can pick this $G_1$ such that $G_1' \notin \mathscr{A}_1$.

Next, if we let $k = \frac{b^2}{a} + b$, we will show that
\begin{equation}\label{control}
\left[ G \in \mathscr{A}_1 \; \& \; G' \notin \mathscr{A}_1 \right] \implies \fint_{G'}\! h \, d\mu \leq k\alpha\,.
\end{equation}
Take $G$ that satisfies the left-hand side of \eqref{control}. Then either $\fint_{G'}\! h \, d\mu \leq \alpha$ or $G' \notin \mathscr{A}_0$, which means $\mu(G') \geq \frac{a}{b}\mu(G_0)$. In the latter case, $\mu(G_0')/\mu(G') \leq \frac{b^2}{a}$ and since $\|f\|_{\BMO} = 1 < \alpha$, we get
\begin{equation}\label{notA1}
\fint_{G'}\! h \, d\mu\leq \fint_{G'}\! |f-f_{G_0'}| \, d\mu + |f_{G_0} - f_{G_0'}| \leq \frac{\mu(G_0')}{\mu(G')} \fint_{G_0'}\! |f-f_{G_0'}| \, d\mu + \frac{\mu(G_0')}{\mu(G_0)} \fint_{G_0'}\! |f-f_{G_0'}| \, d\mu \leq \frac{b^2}{a} + b \leq k \alpha.
\end{equation}
This proves \eqref{control}.

Now we will build a sequence $\{G_n\}$ such that for every $n$, either $G_n' \notin \mathscr{A}_1$ or $G_n' \subset G_i'$ for some $1\leq i \leq n-1$. So we define, for $n \geq 2$, 
$$
\mathscr{A}_n = \left\{G \in \mathscr{A}_{n-1}: G \cap \left( \bigcup\limits_{i=1}^{n-1} G_i \right) = \emptyset \right\} \quad \text{and} \quad d_n = \sup\limits_{G \in \mathscr{A}_n} \{\mu(G)\}\,.
$$
As above, we can pick a $G_n \in \mathscr{A}_n$ such that $d_n < a\mu(G_n)$ and $G_n' \notin \mathscr{A}_n$. This implies that if $G \in \mathscr{A}_n \setminus \mathscr{A}_{n+1}$ (i.e. $G \in \mathscr{A}_n$ and $G\cap G_n \neq \emptyset$), then $G \subset G_n'$. Also, note that when constructed this way, the collection $\{G_n\}$ is pairwise disjoint. Furthermore, $\mathscr{A}_n \subset \mathscr{A}_{n-1}$ and if $G \in \mathscr{A}_1 \setminus \mathscr{A}_n$, then there exists $i \in \N$ such that $1 \leq i < n$ and $G \in \mathscr{A}_i \setminus \mathscr{A}_{i+1}$, hence $G \subset G_i'$.

We pick the subcollection of $\{G_n\}$, that we will label by $\{G_j\}_{j \in \mathcal{J}}$, satisfying $G_j' \notin \mathscr{A}_1$. By \eqref{control}, this will imply that $\fint_{G_j}\!h \, d\mu \leq k\alpha$ for every $j \in \mathcal{J}$. Moreover, we will show that $\bigcup\limits_n G_n' \subset \bigcup\limits_{j \in \mathcal{J}} G_j'$. These will be the sets $\{G_j\}$ in the statement of the lemma.

Since $G_n' \notin \mathscr{A}_n$, then either $G_n' \notin \mathscr{A}_1$, $\fint_{G_n'}\! h \, d\mu \leq k\alpha$, or otherwise $G_n' \in \mathscr{A}_1 \setminus \mathscr{A}_n$, which implies that there exists some $1\leq i \leq n-1$ such that $G_n' \subset G_i'$. Since we know that at least $G_1' \notin \mathscr{A}_1$, then for every $n$, either $G_n' \notin \mathscr{A}_1$ or there exists some $1 \leq i \leq n-1$ such that $G_n' \subset G_i'$ and $G_i' \notin \mathscr{A}_1$.

In other words, we have shown that for every $n$, $G_n' \subset G_j'$ for some $j \in \mathcal{J}$. If $\mathscr{A}_{n+1} = \emptyset$ for some $n \in \N$, then this implies that for every $G \in \mathscr{A}_1$, there exists $1\leq i \leq n$ such that $G \in \mathscr{A}_i \setminus \mathscr{A}_{i+1}$, implying that $G \in G_i'$. Therefore $D_{\alpha} = \bigcup\limits_{\mathscr{A}_1} G \subset \bigcup\limits_{i=1}^n G_i' \subset \bigcup\limits_{j} G_{j}'$.

Now assume that the process goes on indefinitely, i.e. $\mathscr{A}_n \neq \emptyset$ for all $n \in \N$. Since the $\{G_n\}$ are pairwise disjoint and contained in $G_0'$, then 
$$
\sum\limits_{n=1}^{\infty} \mu(G_n') \leq b \sum\limits_{n=1}^{\infty} \mu(G_n) < \infty\,,
$$
which implies that $\mu(G_n') \to 0$. Fix an arbitrary $G \in \mathscr{A}_1$. Then there is some $n \in \N$ such that $\mu(G) \geq a\mu(G_{n+1})$. Because we chose $G_{n+1}$ such that $d_{n+1} < a\mu(G_{n+1})$ and the $\{\mathscr{A}_i\}$ are nested, then $G \notin \mathscr{A}_i$ for every $i \geq n+1$. So there exists an $1\leq \ell \leq n$ such that $G \in \mathscr{A}_{\ell} \setminus \mathscr{A}_{\ell+1}$, implying that $G \subset G_{\ell}'$.

Hence, we obtain that $D_{\alpha} = \bigcup\limits_{\mathscr{A}_1} G \subset \bigcup\limits_{n=1}^{\infty} G_n' \subset \bigcup\limits_{j} G_{j}'$. Then, for almost every $x \in G_0 \setminus \left( \bigcup\limits_{j} G_j' \right)$, $$h(x) \leq M_{G_0,f}(x) \leq \alpha < k\alpha\,,$$ which proves (i).

As was already pointed out, by \eqref{control} 
$$
|f_{G_j'}-f_{G_0}| \leq \fint_{G_j'}\! |f-f_{G_0}| \, d\mu \leq k\alpha\,
$$
for every $j \in \mathcal{J}$, which proves (ii).

Finally, if we let $K=b^2(b+1)$, since the $\{G_j\} \subset \mathscr{A}_1$ are disjoint, using similar techniques to \eqref{notA1}, we get that
\begin{equation}\label{iii}
\sum\limits_{j=1}^{\infty} \mu(G_j') \leq b \sum\limits_{j=1}^{\infty} \mu(G_{j})< \frac{b}{\alpha} \sum\limits_{j=1}^{\infty} \int_{G_{j}}\! h \, d\mu \leq \frac{b}{\alpha} \int_{G_0'}\! |f-f_{G_0}| \, d\mu \leq \frac{K}{\alpha} \mu(G_0)\,,
\end{equation}
which proves (iii).
\end{proof}

For the following proof, we will define, for any $t>0$ and a fixed $f \in \BMO$ and a $G_0 \in \cG$, the sets 
$$
E_t = \{x \in G_0: |f(x)-f_{G_0}| > t\}\,.
$$

\begin{lemma}\label{almostJN}
If $\cG$ is a Denjoy family, then for every $G_0 \in \cG$, $f \in \BMO$ with $\|f\|_{\BMO}=1$, $\alpha>1$, and $N \in \N$,
\begin{equation}\label{keyinequality}
\mu(E_{Nk\alpha}) \leq \left(\frac{K}{\alpha} \right)^N \mu(G_0)\,,
\end{equation}
where $k=\frac{b^2}{a}+b$ and $K=b^2(b+1)$.
\end{lemma}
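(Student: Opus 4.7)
The plan is to prove the estimate by induction on $N$, iterating the Calder\'on-Zygmund-type decomposition given by Lemma \ref{calderonzygmund}. The base case $N=1$ is essentially the content of that lemma: applying it to $G_0$ with the given $f$ and $\alpha$ produces a countable family $\{G_j\}$ of disjoint generators with doubles $\{G_j'\}$ so that conclusion (i) forces $E_{k\alpha} \subset \bigcup_j G_j'$ up to a null set, and conclusion (iii) yields $\sum_j \mu(G_j') \leq (K/\alpha)\mu(G_0)$.

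To make the induction go through, I would strengthen the inductive statement so that at each stage we control not just measures but also mean values. Specifically, the inductive hypothesis I would carry is the existence of a countable family $\{G_j^{(N)}\} \subset \cG$ (of doubles) such that
\begin{enumerate}
\item[(a)] $|f_{G_j^{(N)}} - f_{G_0}| \leq Nk\alpha$ for every $j$,
\item[(b)] $E_{Nk\alpha} \subset \bigcup_j G_j^{(N)}$ up to a null set,
\item[(c)] $\sum_j \mu(G_j^{(N)}) \leq (K/\alpha)^N \mu(G_0)$.
\end{enumerate}
The base case supplies (a) trivially (it is conclusion (ii) of Lemma \ref{calderonzygmund}), (b) as above, and (c) directly. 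For the step from $N$ to $N+1$, I would apply Lemma \ref{calderonzygmund} separately to each $G_j^{(N)}$ (a legitimate generator, since it arose as a double from a prior application), with the same $\alpha$, producing doubles $\{G_{j,i}^{(N+1)}\}$. For any $x \in G_j^{(N)}$ with $|f(x) - f_{G_0}| > (N+1)k\alpha$, the triangle inequality and (a) give $|f(x) - f_{G_j^{(N)}}| > k\alpha$; by conclusion (i) applied at the $(N{+}1)$-st stage, such $x$ lies in $\bigcup_i G_{j,i}^{(N+1)}$ a.e., establishing (b) at level $N+1$. Conclusion (ii) combined with (a) gives (a) at level $N+1$: $|f_{G_{j,i}^{(N+1)}} - f_{G_0}| \leq |f_{G_{j,i}^{(N+1)}} - f_{G_j^{(N)}}| + |f_{G_j^{(N)}} - f_{G_0}| \leq (N+1)k\alpha$. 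Finally, summing conclusion (iii) over $j$ and using (c) yields $\sum_{j,i} \mu(G_{j,i}^{(N+1)}) \leq (K/\alpha)\sum_j \mu(G_j^{(N)}) \leq (K/\alpha)^{N+1}\mu(G_0)$, giving (c). The conclusion $\mu(E_{Nk\alpha}) \leq (K/\alpha)^N \mu(G_0)$ then follows from (b) and (c) by countable subadditivity.

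The argument is essentially a routine Calder\'on-Zygmund iteration, so there is no single hard step; the main obstacle is purely bookkeeping. I need to verify that the countability of the family is preserved through the induction (it is, since countable unions of countable families remain countable), that every $G_j^{(N)}$ is a genuine element of $\cG$ so Lemma \ref{calderonzygmund} is applicable at each stage (doubles live in $\cG$ by Definition \ref{double}), and that the exceptional null sets from each application of the Calder\'on-Zygmund decomposition accumulate only countably many times so that the cumulative exceptional set remains null. With those housekeeping points checked, the induction runs without further input.
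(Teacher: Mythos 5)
Your proposal is correct and follows essentially the same route as the paper: an induction on $N$ carrying the strengthened hypothesis that also controls $|f_{G_j^{(N)}}-f_{G_0}|$, which is exactly what licenses re-applying Lemma \ref{calderonzygmund} to each $G_j^{(N)}$ with the shifted function $|f-f_{G_j^{(N)}}|$. The bookkeeping points you flag (countability, doubles being genuine generators, accumulation of null sets) are the same ones the paper handles implicitly, so no further input is needed.
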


\begin{proof}
As in the proof of Lemma \ref{calderonzygmund}, set $h = |f-f_{G_0}|$. In order to prove the present lemma, we will apply Lemma \ref{calderonzygmund} repeatedly and make use of the fact that for any $t>0$, $\mu(E_t) \leq \mu(D_t)$ by \eqref{maxfunction}.

We will show by induction that for every $N \in \N$, we have a countable collection of sets $\{ G^N_j \} \subset \cG$ such that
\begin{enumerate}
\item[(i)] for almost every $x \in G_0 \setminus\bigcup\limits_j G^N_j$, $|f(x)-f_{G_0}| \leq Nk\alpha$;
\item[(ii)] for every $j$, $|f_{G^N_j} - f_{G_0}| \leq Nk\alpha$; and,
\item[(iii)] $\sum\limits_{j} \mu(G^N_j) \leq \left( \displaystyle{\frac{K}{\alpha}} \right)^N \mu(G_0)$.
\end{enumerate}
We will see that (ii) allows for lemma \ref{calderonzygmund} to be used iteratively. Then, since (i) implies that $E_{Nk\alpha} \subset \bigcup\limits_{j} G^N_j$, we will have that (iii) implies \eqref{keyinequality}.

We know it holds for $N=1$ by Lemma \ref{calderonzygmund}, where we have relabelled the sets $G'_j$ by $G^1_j$. Assume that (i), (ii), and (iii) hold for $N$. Fix $j$ and repeat the decomposition process of Lemma \ref{calderonzygmund} on each $G^N_j$ with the function $h_j(x) = |f(x)-f_{G^N_j}|$. Then for every $j$, we obtain a collection $\{G^N_{j,i} \} \subset \cG$ satisfying the conclusions of Lemma \ref{calderonzygmund}. This implies that for every $j$, for almost every $x \in G^N_j \setminus \left( \bigcup\limits_{i} G^N_{j,i} \right)$, 
$$
|f(x)-f_{G_0}| \leq |f(x)-f_{G^N_j}| + |f_{G^N_j}-f_{G_0}| \leq (N+1)k\alpha\,;
$$
and
$$
|f_{G^N_{j,i}}-f_{G_0}| \leq |f_{G^N_{j,i}}-f_{G^N_j}| + |f_{G^N_j}-f_{G_0}| \leq (N+1)k\alpha\,.
$$
Moreover, we have that
$$
\sum\limits_{j} \sum\limits_{i} \mu(G^N_{j,i}) \leq \frac{K}{\alpha} \sum\limits_{j} \mu(G^N_j) \leq \left( \frac{K}{\alpha} \right)^{(N+1)} \mu(G_0)\,.
$$

If we relabel $\{G^N_{j,i} \}_{i,j}$ as $\{G^{N+1}_j\}_j$, we see that it satisfies (i), (ii), and (iii) above for $N+1$. Therefore, by induction, they hold for every $N \in \N$.
\end{proof}

As pointed out earlier, this lemma and the preceding Calder\'{o}n-Zygmund decomposition are enough to be able to prove Theorem \ref{Denjoytheorem}. Interestingly, Example \ref{noJNPexample} shows this result is no longer true if the weak decomposition property is removed from the assumptions. 

We point out that one can optimize the constant $c_2$ in the John-Nirenberg property, making it as large as possible, by letting $\alpha = Ke$, in order to obtain $c_1 = \sqrt{e}$ and $c_2 = (2kKe)^{-1}$. Here $k$ and $K$ are from Lemma \ref{calderonzygmund}.  

As a side note, if $f \in \Lone(\X)$ and $\cG$ satisfies the finiteness and doubling properties, with doubling constants $a$ and $b$, then the maximal function $M_{\cG}f(x) = \sup \left\{ \fint_{G(x)}\! |f| \, d\mu: G(x) \in \cG \right\}$ is in weak-$\Lone(\X)$, i.e.
$$
\mu(\{x \in X: Mf(x) > \alpha\}) \leq \frac{b}{\alpha}\|f\|_{\Lone(\X)}\,.
$$
The proof is similar to the one found in \cite[Theorem 2.2]{heinonen}.

\subsection{Local Denjoy families}

Observing that all the proofs leading to the John-Nirenberg inequality in Theorem \ref{Denjoytheorem} are local in nature and depend only on a subcollection of $\cG$, we  define local Denjoy families:

\begin{definition}\label{localDenjoy}
For $1<a \leq b$, we say that $\cG_0=\cG(G_0)\subset \{G \in \cG: G \cap G_0 \neq \emptyset \; \& \; \mu(G) \leq a\mu(G_0)\}$ is a local $(a,b)$-Denjoy family for the generator $G_0$ if it satisfies the following conditions:
\vskip 1\baselineskip
\begin{itemize}
\item[] {\bf Shrinking  Property}\\ For almost every $x \in G_0$, there exists a sequence $\{G_n\}\subset \cG_0$ such that $G_n \to x$.\\
\item[] {\bf Denjoy Doubling Property}\\ Every $G \in \cG_0$ with $\mu(G)<\frac{a}{b}\mu(G_0)$ has an $(a,b)$-Denjoy double $G' \in \cG_0$ with respect to $\cG_0$, i.e. the set map $\Omega_a$ is restricted to $\cG_0$.\\
\item[] {\bf Growth Property}\\ For any $G \in \cG_0$, there exists $j \in \N$ such that $\mu^{\ast}\left(\Omega_a^{(j)}(G)\right) \geq \frac{a}{b} \mu(G_0)$.\\
\item[] {\bf Weak Differentiation Property}\\ For every $f \in \Loneloc(\X,\cG_0)$, for almost every $x \in G_0$, $|f(x)| \leq \limsup\limits_{G \to x} \displaystyle\fint_G\! |f| \, d\mu$, where the limsup is taken over $\{G\}\subset\cG_0$ such that $G \to x$. \\
\item[] {\bf Engulfing Property}\\ For any finite subcollection $\{G_n\}_{n=1}^N \subset \cG_0$, there exists $\widetilde{G} \in \cG$ such that $\bigcup\limits_{n=0}^N G_n \subset \widetilde{G}$ and $\mu(\widetilde{G}) \leq b\mu(G_0)$.
\end{itemize}
\end{definition}

Properties 1 to 4 are simply local versions of the properties in Definition \ref{defdenjoy}. The engulfing property was implied in the ``global" definition, by setting $\widetilde{G}=G_0'$ for any finite subcollection. Note that we do not require that $\widetilde{G}$ be in $\cG_0$.

\begin{theorem}
If $\cG(G_0)=\cG_0 \subset \cG$ is a local $(a,b)$-Denjoy family for $G_0 \in \cG$, then we have a John-Nirenberg inequality with respect to $G_0$ for any $f \in \BMO$.
The constants are $c_1=\sqrt{e}$ and $c_2=(2kKe)^{-1}$, where $k=\frac{b^3}{a} + b$ and $K=b^2(b+1)$.
\end{theorem}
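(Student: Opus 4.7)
The plan is to adapt the proof of Theorem \ref{Denjoytheorem} to the local setting, with the engulfing property taking over the role played implicitly by the global double $G_0'$ in the non-local argument. Concretely, I would establish local analogs of Lemma \ref{maximal}, Lemma \ref{calderonzygmund}, and Lemma \ref{almostJN} using only generators drawn from $\cG_0$, then derive the John-Nirenberg inequality with the stated constants by the same optimization as in the remark following Theorem \ref{Denjoytheorem}, namely by choosing $\alpha=Ke$ and interpolating $t$ between consecutive multiples of $k\alpha$.

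First I would mirror the setup of Lemma \ref{calderonzygmund}: define $\mathscr{A}_0 = \{G \in \cG_0 : \mu(G) < \tfrac{a}{b}\mu(G_0)\}$, a local maximal function $M_{G_0,f}$ whose supremum is taken only over $\mathscr{A}_0$, and the subclasses $\mathscr{A}_1$ and the set $D_\alpha$. The local shrinking and weak differentiation properties, together with the fact that any $f\in\BMO$ is locally integrable on $\cG_0$ via the size bound $\mu(G)\leq a\mu(G_0)$, yield $|f(x)-f_{G_0}|\leq M_{G_0,f}(x)$ for almost every $x\in G_0$. The local maximal-element analog of Lemma \ref{maximal} carries over essentially verbatim, as its proof only uses the (local) growth and Denjoy doubling properties.

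The key new ingredient is the local Calderón-Zygmund-type decomposition. I would select the disjoint family $\{G_j\}\subset \cG_0$ exactly as in the proof of Lemma \ref{calderonzygmund}, and verify the three conclusions. The delicate step is bounding $\fint_{G'}|f-f_{G_0}|\,d\mu \leq k\alpha$ when $G \in \mathscr{A}_1$ has double $G' \notin \mathscr{A}_1$. By the local doubling property one has $G' \in \cG_0$, so $G' \notin \mathscr{A}_0$ forces $\mu(G')\geq \tfrac{a}{b}\mu(G_0)$. Applying the engulfing property to the finite subcollection $\{G, G'\}$ produces $\widetilde{G}\in\cG$ with $G_0\cup G'\subset\widetilde{G}$ and $\mu(\widetilde{G})\leq b\mu(G_0)$. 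Since $\widetilde{G}\in\cG$, the mean oscillation of $f$ on $\widetilde{G}$ is at most $\|f\|_{\BMO}=1$, and the same triangle-inequality argument as in \eqref{notA1}, with $\widetilde{G}$ replacing $G_0'$, gives the bound in terms of $\mu(\widetilde{G})/\mu(G')$ and $\mu(\widetilde{G})/\mu(G_0)$. The covering estimate (iii) follows from the same disjointness computation, giving $K=b^2(b+1)$.

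The hard part is the iteration corresponding to Lemma \ref{almostJN}: the Calderón-Zygmund decomposition must be applied inside each $G_j^N$, but our hypothesis gives a local Denjoy structure rooted at $G_0$, not at $G_j^N$. To handle this, at each iteration stage I would apply the engulfing property to the previously-produced generators and work inside the resulting sub-family of $\cG_0$, checking that this sub-family inherits the shrinking, Denjoy doubling, growth, weak differentiation, and engulfing properties with respect to the new root. The cost of routing every engulfing through the original root $G_0$ is precisely the extra factor of $b$ responsible for $k=\tfrac{b^3}{a}+b$ in place of the global constant $\tfrac{b^2}{a}+b$. Once the induction delivers $\mu(E_{Nk\alpha})\leq (K/\alpha)^N\mu(G_0)$, the passage to exponential decay is the standard one: set $\alpha=Ke$, interpolate $t\in[Nk\alpha,(N+1)k\alpha)$, and read off the John-Nirenberg inequality with $c_1=\sqrt{e}$ and $c_2=(2kKe)^{-1}$.
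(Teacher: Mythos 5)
Your overall strategy --- localizing Lemmas \ref{calderonzygmund} and \ref{almostJN} and using the engulfing property in place of the global double $G_0'$ --- is the same as the paper's. But there is a concrete gap in how you set up the decomposition: you keep $\mathscr{A}_0=\{G\in\cG_0:\mu(G)<\frac{a}{b}\mu(G_0)\}$, whereas the paper redefines $\mathscr{A}_0=\{G\in\cG_0:\mu(G)<\frac{a}{b^2}\mu(G_0)\}$ as in \eqref{newA0}, and this shrinking of the threshold is exactly what makes the iteration in Lemma \ref{almostJN} close. The local Denjoy doubling property only guarantees doubles for $G\in\cG_0$ with $\mu(G)<\frac{a}{b}\mu(G_0)$. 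With your threshold, the roots $G_j^1=G_j'$ of the second-stage decomposition can have measure as large as $b\cdot\frac{a}{b}\mu(G_0)=a\mu(G_0)$, and the sets one must then double at stage two have measure up to $\frac{a}{b}\mu(G_j^1)\le\frac{a^2}{b}\mu(G_0)$, which exceeds $\frac{a}{b}\mu(G_0)$ since $a>1$; so neither Lemma \ref{maximal} nor the selection of maximal sets is available at the second stage. Your proposed fix --- that the sub-family rooted at $G_j^N$ ``inherits'' the Denjoy doubling property with respect to the new root --- is precisely the assertion that fails here. With the paper's threshold, one checks inductively that every root $G_j^N$ has measure $<\frac{a}{b}\mu(G_0)$ (hence itself has a double in $\cG_0$, which serves as the engulfing set at the next stage) and every set considered at stage $N+1$ has measure $<\frac{a}{b^2}\mu(G_j^N)<\frac{a}{b}\mu(G_0)$, so the doubling and maximal-selection arguments apply uniformly at every stage.

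Relatedly, your accounting for the constant $k=\frac{b^3}{a}+b$ is off. The extra factor of $b$ does not come from ``routing every engulfing through the original root'': the engulfing set has measure at most $b\mu(G_0)$, exactly as $G_0'$ does in the global proof. It comes from the weakened lower bound in the analogue of \eqref{control}: with the redefined $\mathscr{A}_0$, a double $G'\notin\mathscr{A}_0$ only satisfies $\mu(G')\ge\frac{a}{b^2}\mu(G_0)$, so the first term in \eqref{notA1} becomes $\mu(\widetilde{G})/\mu(G')\le b\mu(G_0)/\bigl(\tfrac{a}{b^2}\mu(G_0)\bigr)=\frac{b^3}{a}$. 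With your threshold the same computation would give $\frac{b^2}{a}+b$, which does not match the constant stated in the theorem --- a sign that the threshold you chose is not the one the argument requires.
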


\begin{proof}
It suffices to show that the Lemmas \ref{calderonzygmund} and \ref{almostJN} hold for a fixed $G_0$ with $\cG_0$ in place of $\cG$. Then, the result follows from the proof of Theorem \ref{Denjoytheorem}, with the constant $k$ being changed because of the change in the definition of $\mathscr{A}_0$ below in \eqref{newA0}. 

First, we point out that the engulfing property tells us that for any countable pairwise disjoint subcollection of $\cG_0$, the measure of the union is finite. Let $\{G_n\}_{n=1}^{\infty} \subset \cG_0$ and fix for every $N \in \N$ an engulfing set $\widetilde{G}_N$, then
\begin{equation}\label{limit}
\sum\limits_{n=1}^{\infty} \mu(G_n) = \lim\limits_{N \to \infty} \sum\limits_{n=1}^N \mu(G_n) \leq \sup\limits_{N \in \N} \mu(\widetilde{G}_N) \leq b \mu(G_0).
\end{equation}
In turn, this tells us that $\mu(G_n) \to 0$, which is needed in the proof of Lemma \ref{calderonzygmund}. It also allows us to obtain the estimate \eqref{iii} by replacing $G_0'$ with $\widetilde{G}_N$ for finite sums and then taking the limit, as done in \eqref{limit}.

Second, the engulfing property allows us to obtain the estimate \eqref{control} by replacing $G_0'$ with an engulfing $\widetilde{G} \supset G_0 \cup G$ in \eqref{notA1}.

One last modification is needed in order for the proofs of Lemmas \ref{calderonzygmund} and \ref{almostJN} to hold for $G_0$ with its local Denjoy family $\cG_0$. We will redefine $\mathscr{A}_0$ as
\begin{equation}\label{newA0}
\mathscr{A}_0 = \{G \in \cG_0: \mu(G) < \frac{a}{b^2}\mu(G_0)\}\,.
\end{equation}
Clearly, we want to restrict $\mathscr{A}_0$ to $\cG_0$, and the reason we need $b$ to be squared is because when we repeat the decomposition of Lemma \ref{calderonzygmund} for the sequence $\{G_j\}$ first obtained by this lemma, we need to know that for every $j$, the sets in $\{G \in \cG_0: \mu(G) < \frac{a}{b^2}\mu(G_j)\}$ have a double. This follows from the fact that each $G_j$ is the double of a set in $\mathscr{A}_0$, so we know that $\mu(G_j) < \frac{a}{b}\mu(G_0)$, i.e. $G_j$ itself has a double, which will also serve as an engulfing set. Thus we can repeat the decomposition indefinitely.

Finally, note that if $\bigcup\limits_{G \in \cG_0} G \subset G_0$, then $G_0$ could serve as an engulfing set and this would optimize the constants obtained in \eqref{notA1} and \eqref{iii}, giving us $k=\frac{b}{a}$ and $K=b$.
\end{proof}

Note that the result here is local as well, as we only get the John-Nirenberg inequality for a fixed $G_0$, but we do not know anything about the other generators. Thus, we will look at how local Denjoy families can give us the JNP, which is a ``global" property. This will allow us to prove the JNP for rectangles in $\R^n$ (see Example \ref{rectangles}).

\begin{proposition}\label{unionDenjoy}
If every $G \in \cG$ has a local Denjoy family with doubling constants $a_G$ and $b_G$ such that $b=\sup\limits_{G \in \cG} b_G < \infty$, then $\BMO$ has the JNP.

If $\cG = \bigcup\limits_{\omega} \cG_{\omega}$, where $\cG_{\omega}$ is a Denjoy family for every $\omega$ with doubling constants $a_{\omega}$ and $b_{\omega}$ such that $b=\sup\limits_{\omega} b_{\omega}<\infty$, then $\BMO$ has the JNP.
\end{proposition}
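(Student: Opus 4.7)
The plan in both parts is to invoke the John-Nirenberg theorem already proved for (local) Denjoy families separately at each generator (resp.\ on each $\cG_\omega$), and then to check that the constants produced are uniform so that they combine into a single JNP with $f$-independent constants.

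For the first part, fix $G \in \cG$ and apply the local John-Nirenberg theorem to its local $(a_G, b_G)$-Denjoy family. This yields, for every $f \in \BMO$ and all $t>0$,
$$\mu_f(t,G) \leq \sqrt{e}\,\exp\left(\frac{-1}{2 k_G K_G e\,\|f\|_{\BMO}}\,t\right),$$
with $k_G = b_G^3/a_G + b_G$ and $K_G = b_G^2(b_G+1)$. The hypothesis $b_G \leq b$ gives $K_G \leq b^2(b+1)$, while the inequality $a_G>1$ built into the definition of a Denjoy family gives $k_G \leq b_G^3 + b_G \leq b^3+b$. Hence the constant $c_2 = (2(b^3+b)\,b^2(b+1)\,e)^{-1}$ works uniformly in $G$, and we obtain the JNP with $c_1 = \sqrt{e}$ and this $c_2$.

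For the second part, the strategy is the same, using instead Theorem \ref{Denjoytheorem} applied to each Denjoy family $\cG_\omega$: this yields that $\mathrm{BMO}_{\cG_\omega}(\X)$ has the JNP with constants $\sqrt{e}$ and $(2 k_\omega K_\omega e)^{-1}$, where now $k_\omega = b_\omega^2/a_\omega + b_\omega$ and $K_\omega = b_\omega^2(b_\omega+1)$. Given an arbitrary $G \in \cG$ and $f \in \BMO$, pick an $\omega$ with $G \in \cG_\omega$. Since $\cG_\omega \subset \cG$, we have $\Lploc(\X,\cG)\subset \Lploc(\X,\cG_\omega)$ and $\|f\|_{\mathrm{BMO}_{\cG_\omega}(\X)} \leq \|f\|_{\BMO}$; substituting the larger quantity $\|f\|_{\BMO}$ into the exponential bound from the $\cG_\omega$-JNP only weakens it, so the inequality transfers. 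Bounding $k_\omega K_\omega$ uniformly by $(b^2+b)\,b^2(b+1)$ via $b_\omega \leq b$ and $a_\omega > 1$, exactly as in the first part, completes the proof.

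The only real content beyond quoting the two previous John-Nirenberg theorems is the uniformity check on the constants, and this is precisely what the hypothesis $\sup_G b_G <\infty$ (resp.\ $\sup_\omega b_\omega < \infty$) is for; the matching lower bound $a>1$ is automatic from the definition of a Denjoy family, which is exactly why the hypothesis needs to constrain only the upper doubling constant.
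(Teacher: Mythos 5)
Your proposal is correct and follows essentially the same route as the paper: apply the John--Nirenberg results already established for (local) Denjoy families generator by generator, and use $a_G>1$ together with $b_G\leq b$ (resp.\ $a_\omega>1$, $b_\omega\leq b$) to replace the constants $k_G,K_G$ by uniform ones, noting that a smaller $c_2$ only weakens the exponential bound and that $\|f\|_{\mathrm{BMO}_{\cG_\omega}(\X)}\leq\|f\|_{\BMO}$ transfers the inequality. The only (immaterial) difference is that the paper works at the level of Lemma \ref{almostJN} and observes that, since \emph{every} generator has a local Denjoy family, the redefinition of $\mathscr{A}_0$ in \eqref{newA0} is unnecessary, yielding the slightly sharper $k_G=b_G^2/a_G+b_G$ rather than the $b_G^3/a_G+b_G$ you inherit from quoting the local theorem verbatim.
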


\begin{proof}
Note that since every $G\in\cG$ has a local Denjoy family, we do not need to redefine $\mathscr{A}_0$ as in \eqref{newA0}. Applying Lemma \ref{almostJN} for a fixed $G\in\cG$ with $k_G=\frac{b_G^2}{a_G} +b_G$ and $K_G=b^2_G(b_G+1)$, and using $k=b^2+b\geq k_G$ and $K=b^2(b+1)\geq K_G$ since $a_G>1$ and $b\geq b_G$, we have that, for every $N\in\N$,
 $$
\mu\left(E_{Nk\alpha}\right) \leq \mu\left(E_{N k_G\alpha}\right) \leq \left(\frac{K_G}{\alpha} \right)^N \mu(G) \leq \left(\frac{K}{\alpha} \right)^N \mu(G)\,.
$$
Thus, \eqref{keyinequality} holds for $k$ and $K$, which implies Theorem \ref{Denjoytheorem}, completing the proof.

The proof of the second statement follows in a similar manner.
\end{proof}

\subsection{The shrinking property}
In this section, we ask which properties on $(\X,\cG)$ are necessary for $\BMO$ to possess the JNP. In particular, we show that the shrinking property is one of them. 

\begin{proposition}\label{notJNP}
If there exists a pairwise disjoint collection of sets $\{E_n \}_{n=1}^{\infty} \subset \cM^*$ with $E = \bigcup\limits_{n=1}^{\infty} E_n$ such that
\begin{itemize}
\item $\mu(E) < \infty$,
\item $\alpha = \inf \left\{ \mu(G): \mu(G \cap E)>0 \right\} >0$, and
\item there exists $G_0 \in \cG$ such that it contains infinitely many elements of $\{E_n\}_{n=1}^{\infty}$,
\end{itemize}
then $\BMO$ does not have the JNP.
\end{proposition}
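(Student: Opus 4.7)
The plan is to disprove the JNP by constructing, for any candidate constants $c_1, c_2 > 0$, a function $f \in \BMO$ and a threshold $t > 0$ witnessing failure of the inequality on $G=G_0$. The crucial engine is the hypothesis $\alpha > 0$: any generator $G$ that meets $E$ in positive measure must satisfy $\mu(G) \geq \alpha$, which will let us simultaneously shrink the BMO norm of suitable test functions while keeping their oscillation on $G_0$ macroscopic.

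The first observation I would record is that the sequence $\mu(E_{n_k})$, indexing those $E_n \subset G_0$ (which by hypothesis is infinite), tends to zero. Indeed, $\{E_{n_k}\}$ is a pairwise disjoint subfamily of $G_0 \cap E$, so $\sum_k \mu(E_{n_k}) \leq \mu(E) < \infty$, forcing $\mu(E_{n_k}) \to 0$. Pick any constant $M > 0$ and set $f_k = M\chi_{E_{n_k}}$. For any $G \in \cG$ with $\mu(G \cap E_{n_k}) = 0$, the function $f_k$ is zero a.e.\ on $G$ and contributes nothing; for any $G$ with $\mu(G \cap E_{n_k}) > 0$, the hypothesis gives $\mu(G) \geq \alpha$, and the standard computation for indicator-type functions,
$$
\fint_G |f_k - (f_k)_G|\, d\mu = \frac{2M\, \mu(G \cap E_{n_k})\, \mu(G \setminus E_{n_k})}{\mu(G)^2} \leq \frac{2M\,\mu(E_{n_k})}{\alpha},
$$
provides the uniform bound $\|f_k\|_{\BMO} \leq 2M\mu(E_{n_k})/\alpha$.

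Next I would estimate the mean relative distribution function on $G_0$. Since $E_{n_k} \subset G_0$ and $\mu(E_{n_k}) \to 0$, for $k$ large we have $(f_k)_{G_0} = M\mu(E_{n_k})/\mu(G_0) < M/2$, hence $|f_k - (f_k)_{G_0}| > M/2$ everywhere on $E_{n_k}$. Therefore
$$
\mu_{f_k}(M/2, G_0) \geq \frac{\mu(E_{n_k})}{\mu(G_0)}.
$$
If JNP held with constants $c_1, c_2$, combining the two estimates and setting $\varepsilon_k = \mu(E_{n_k})$ would give
$$
\frac{\varepsilon_k}{\mu(G_0)} \leq c_1 \exp\!\left(-\frac{c_2 (M/2)}{2M\varepsilon_k/\alpha}\right) = c_1 \exp\!\left(-\frac{c_2 \alpha}{4\varepsilon_k}\right).
$$
This is a contradiction once $\varepsilon_k$ is sufficiently small, because the right-hand side decays to zero super-exponentially in $1/\varepsilon_k$ while the left-hand side only decays linearly in $\varepsilon_k$. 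Since $\varepsilon_k \to 0$, some $k$ violates the inequality, so no pair $(c_1,c_2)$ can work.

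The only delicate points I anticipate are verifying that $f_k \in \BMO$ with $\|f_k\|_{\BMO}$ actually finite and handling the trivial case $\|f_k\|_{\BMO} = 0$; the former is settled by the uniform bound above (which is global over all $G \in \cG$), and the latter cannot arise because $G_0$ contains other $E_{n_j}$ with $j\neq k$, giving $\mu(G_0 \setminus E_{n_k}) > 0$ and thus positive mean oscillation on $G_0$. Everything else is routine arithmetic, so I do not foresee a serious obstacle beyond being careful about the case distinction on whether $G$ meets $E_{n_k}$ in positive measure.
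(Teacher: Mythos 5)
Your argument is correct, and it takes a genuinely different route from the paper. The paper's proof builds a \emph{single} unbounded function $f=\sum_n 2^{-n}\mu(E_n)^{-1}\chi_{E_n}$ (after arranging $E\subset G_0$ and $\mu(E_n)<2^{-(n^2+1)}$), checks via the hypothesis $\alpha>0$ that $f\in\BMO$, observes that $f\notin L^2_{\mathrm{loc}}(\X,\cG)$ because $\int_{G_0}|f|^2\,d\mu=\sum_n 2^{-2n}/\mu(E_n)=\infty$, and then invokes Corollary \ref{JNPlploc} (the self-improvement $\mathrm{WJNP}\Rightarrow\BMO\subset\Lploc(\X,\cG)$). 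You instead produce a \emph{sequence} of bounded indicator-type functions $f_k=M\chi_{E_{n_k}}$ whose $\BMO$ norms are forced down to zero by the lower bound $\mu(G)\geq\alpha$ on any generator meeting $E$ in positive measure, while the relative measure of the level set $\{|f_k-(f_k)_{G_0}|>M/2\}$ in $G_0$ decays only linearly in $\varepsilon_k=\mu(E_{n_k})$; this directly rules out any uniform pair $(c_1,c_2)$, with no appeal to the $L^p$ self-improvement machinery. Each approach buys something: yours is more elementary and self-contained, and it isolates exactly where uniformity in $f$ fails; the paper's argument, though it routes through Proposition \ref{isomorphism}, actually disproves the stronger \emph{weak} John--Nirenberg property (WJNP), since the constant $C_f$ there may depend on $f$ and so your sequence $\{f_k\}$, each with its own $C_{f_k}$, would not yield a contradiction with the WJNP. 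Since the proposition only asserts failure of the JNP, your proof establishes precisely the stated claim; all the computations (the two-sided indicator oscillation identity, the bound $\|f_k\|_{\BMO}\leq 2M\varepsilon_k/\alpha$, the nondegeneracy $\|f_k\|_{\BMO}>0$ coming from the presence of other $E_{n_j}$ inside $G_0$, and the asymptotic comparison of $\varepsilon_k$ against $e^{-c_2\alpha/(4\varepsilon_k)}$) check out.
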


\begin{proof}
First, if $\widetilde{E}_n = E_n \cap G_0$ and $\widetilde{E} = \bigcup\limits_{n=1}^{\infty} \widetilde{E}_n$, then $\{ \widetilde{E}_n \}_{n=1}^{\infty}$ has an infinite subsequence that satisfies the same conditions as above. Therefore, we can assume without loss of generality that $E \subset G_0$.
Second, since $\sum\limits_{n=1}^{\infty} \mu(E_n) = \mu(E) < \infty$, then $\mu(E_n) \to 0$. Therefore, we can assume without loss of generality that $\mu(E_n)< 2^{-(n^2+1)}$ for every $n \in \N$.

Let $\displaystyle{f = \sum\limits_{n=1}^{\infty} \frac{2^{-n}}{\mu(E_n)} \chi_{E_n}}$. If $\mu(G \cap E)>0$, then setting $M=\alpha^{-1}$,
$$
\fint_G\! |f-f_G| \, d\mu \leq \frac{2}{\mu(G)} \int_G \!|f| \, d\mu \leq 2M \, \sum\limits_{n=1}^{\infty} \int_{E_n}\! |f| \, d\mu = 2M \, \sum\limits_{n=1}^{\infty} 2^{-n} = 2M\,.
$$
If $\mu(G \cap E) = 0$, then $f=0$ almost everywhere on G, so $\displaystyle\fint_G\! |f-f_G| \, d\mu = 0$. Therefore, $f \in \BMO$, but $f \notin L^2_{loc}(\X,\cG)$ since
$$
\int_{G_0}\! |f|^2 \, d\mu = \sum\limits_{n=1}^{\infty} \frac{2^{-2n}}{\mu(E_n)}\geq \sum\limits_{n=1}^{\infty} \frac{2^{-2n}}{2^{-(n^2 +1)}}= \sum\limits_{n=1}^{\infty} 2^{(n - 1)^2} = \infty\,.
$$
Hence, by Corollary \ref{JNPlploc}, $\BMO$ does not have the JNP.
\end{proof}

\begin{lemma}\label{countablecover}
Let $\mathcal{F} \subset \cM^*$ and $A \in \cM^*$. If $\mathcal{F}$ does not admit a countable essential subcover of $A$, then there exists $E \in \cM^*$ such that $E \subset A$ and $\mu(F \cap E) = 0$ for every $F \in \mathcal{F}$. 
\end{lemma}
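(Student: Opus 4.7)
The plan is to mimic the maximal-set construction already used in the proof of Proposition \ref{normFCPsigma2}, adapted to the family $\mathcal{F}$ rather than $\cG$. Consider the class
$$
\cQ = \Bigl\{ Q \subset A : Q = \bigcup_{i=1}^{\infty} (F_i \cap A) \text{ for some countable } \{F_i\}_{i=1}^{\infty} \subset \mathcal{F} \Bigr\}.
$$
Since $\mu(A) < \infty$, the supremum $K := \sup\{\mu(Q) : Q \in \cQ\}$ is finite. My first step is to produce a maximizer $Q_* \in \cQ$ with $\mu(Q_*) = K$: pick $\widetilde{Q}_n \in \cQ$ with $\mu(\widetilde{Q}_n) > K - 1/n$, and set $Q_* = \bigcup_{n=1}^{\infty} \widetilde{Q}_n$. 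A countable union of countable unions is countable, so $Q_* \in \cQ$, and monotone convergence gives $\mu(Q_*) = K$.

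Next, I would define $E := A \setminus Q_*$; since $A$ and $Q_*$ lie in $\cM$, so does $E$, and $\mu(E) \leq \mu(A) < \infty$. The main content of the lemma is then to verify the two non-trivial properties of $E$. For the essential-disjointness condition, suppose for contradiction that $\mu(F \cap E) > 0$ for some $F \in \mathcal{F}$. Then $Q_* \cup (F \cap A)$ is again an element of $\cQ$, but its measure equals $\mu(Q_*) + \mu(F \cap E) > K$, contradicting the maximality of $K$. Hence $\mu(F \cap E) = 0$ for every $F \in \mathcal{F}$.

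For the positivity $\mu(E) > 0$, I would argue by contrapositive from the hypothesis. If $\mu(E) = 0$, then $\mu(A \setminus Q_*) = 0$; writing $Q_* = \bigcup_i (F_i \cap A)$ for the countable collection $\{F_i\} \subset \mathcal{F}$ realizing $Q_*$, we have $A \setminus \bigcup_i F_i \subset A \setminus Q_*$, so $\mu(A \setminus \bigcup_i F_i) = 0$. This exhibits a countable essential subcover of $A$ drawn from $\mathcal{F}$, contradicting the hypothesis. Therefore $\mu(E) > 0$, i.e.\ $E \in \cM^*$.

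The only real subtlety is bookkeeping: one must check that the candidate class $\cQ$ is stable under countable unions (so that $Q_*$ actually sits in $\cQ$) and under adjoining one more set $F \cap A$ (so that the maximality argument bites). Both are immediate from the definition, so I do not expect an obstacle beyond this; the rest is a direct transcription of the maximal-set technique already employed in the paper.
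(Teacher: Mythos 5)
Your proposal is correct and follows essentially the same route as the paper, which likewise invokes the maximal-set construction from the proof of Proposition \ref{normFCPsigma2} to produce a maximal countable union $Q$ of sets $F\cap A$ and then takes $E=A\setminus Q$. Your write-up is in fact more self-contained (the paper disposes of the case $\mu(F\cap A)=0$ for all $F$ separately and otherwise just cites the earlier argument), and your construction handles that degenerate case automatically since then $K=0$ and $E=A$ up to a null set.
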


\begin{proof}
Write $\mathscr{A} = \{ F \in \mathcal{F} : \mu  ( F \cap A ) > 0 \}$. First, if $\mathscr{A} = \emptyset$, then it cannot essentially countably cover $A$, since this implies that $\mu(F \cap A) = 0$ for every $F \in \mathcal{F}$, but $\mu(A)>0$. Then $E = A$ and the proof is done.

Otherwise, as in the second part of the proof of Lemma \ref{normFCPsigma2}, we can get a countable union of elements of $\{F \cap A: \mu(F \cap A)>0\}$, denoted $Q$, that is maximal in the sense that $\mu(F \cap (A \setminus Q)) = 0$ for any $F \in \mathcal{F}$ and $\mu(A \setminus Q) > 0$.
\end{proof}

\begin{lemma}\label{Geps}
Writing $\mathscr{G}_{\eps} = \left\{ G \in \cG: \mu(G) < \eps \right\}$, the following statements are equivalent:
\begin{enumerate}
\item[(i)] for every $A \in \cM^*$ and $\eps>0$, there exists $G \in \cG_{\eps}$ such that $\mu(G \cap A)>0$;
\item[(ii)] for every $A \in \cM^*$ and $\eps>0$, $\cG_{\eps}$ admits a countable essential subcover of $A$.
\end{enumerate}
\end{lemma}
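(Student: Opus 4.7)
The plan is to prove the two implications separately, with the reverse direction being essentially immediate and the forward direction reducing cleanly to the preceding Lemma \ref{countablecover}.

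For (ii) $\Rightarrow$ (i): Given $A\in\cM^*$ and $\eps>0$, condition (ii) supplies a countable subcollection $\{G_n\}\subset\cG_\eps$ that essentially covers $A$, meaning $\mu\bigl(A\setminus\bigcup_n G_n\bigr)=0$. Since $\mu(A)>0$, subadditivity forces $\sum_n \mu(G_n\cap A)>0$, so at least one index $n$ yields $\mu(G_n\cap A)>0$, and this $G_n$ lies in $\cG_\eps$, which is exactly (i). This step is routine and will require no further machinery.

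For (i) $\Rightarrow$ (ii): I would argue by contraposition. Suppose some $A\in\cM^*$ and $\eps>0$ are such that $\cG_\eps$ fails to admit a countable essential subcover of $A$. Applying Lemma \ref{countablecover} with $\mathcal{F}=\cG_\eps$ produces a set $E\in\cM^*$ with $E\subset A$ and $\mu(G\cap E)=0$ for every $G\in\cG_\eps$. Then the pair $(E,\eps)$ witnesses the failure of (i), completing the contrapositive. The only subtle point is that Lemma \ref{countablecover} is stated for an abstract family $\mathcal{F}\subset\cM^*$, so one just has to verify that $\cG_\eps\subset\cM^*$, which is immediate from the definition of $\cG$.

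I do not anticipate any real obstacle, since Lemma \ref{countablecover} has done all of the heavy lifting; the present lemma is essentially an unpacking of that result for the specific family $\cG_\eps$ together with the trivial observation that a countable essential subcover must contain at least one generator meeting $A$ in positive measure.
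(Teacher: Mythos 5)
Your proposal is correct and follows essentially the same route as the paper: the reverse implication is the same routine observation that a countable essential subcover of a positive-measure set must contain a generator meeting it in positive measure, and the forward implication is the same contrapositive appeal to Lemma \ref{countablecover} with $\mathcal{F}=\mathscr{G}_{\eps}$, producing a set $E\subset A$ in $\cM^{*}$ that every $G\in\mathscr{G}_{\eps}$ meets in measure zero.
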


\begin{proof}
To show that (i) implies (ii), fix $A \in \cM^*$ and $\eps>0$. Let $\mathcal{F} = \mathscr{G}_{\eps}$ and $\mathscr{A} = \{ G \in \mathscr{G}_{\eps}: \mu(G \cap A)>0 \}$. If $\mathscr{A}$ does not admit a countable essential subcover of A, then by Lemma \ref{countablecover}, there exists $E \in \cM^*$ such that $\mu(G \cap E) = 0$ for every $G \in \mathscr{G}_{\eps}$, which is a contradiction. Therefore, $\mathscr{G}_{\eps}$ is a countable essential cover of $A$ for every $A \in \cM^*$ and for every $\eps>0$

To show that (ii) implies (i), if we fix a countable essential subcover of $\mathscr{G}_{\eps}$ for $A$ (for an arbitrary $\eps>0$ and $A \in \cM^*$), then there must be at least one $G \in \mathscr{G}_{\eps}$ such that $\mu(G \cap A) >0$ since $\mu(A)>0$.
\end{proof}

\begin{theorem}\label{epscover}
Let $(\X,\cG)$ be a nonatomic, complete decomposable measure space endowed with a collection of generators such that $\BMO$ is a Banach space modulo constants. If $\BMO$ has the JNP, then $\mathscr{G}_{\eps}$ admits a countable essential subcover of $A$ for every $A \in \cM^*$ and for every $\eps>0$.
\end{theorem}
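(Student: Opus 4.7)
The plan is to prove the contrapositive via Proposition \ref{notJNP}: starting from the failure of the countable essential subcover property I would manufacture a pairwise disjoint family $\{E_n\}$ inside a single generator $G_0$ that triggers failure of the JNP.

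First I would negate the conclusion: suppose there exist $A\in\cM^*$ and $\eps>0$ for which $\mathscr{G}_{\eps}$ admits no countable essential subcover of $A$. Applying Lemma \ref{Geps} to this pair (in its equivalent formulation (i)), this is the same as asserting that no $G\in\mathscr{G}_{\eps}$ satisfies $\mu(G\cap A)>0$. Consequently
$$
\alpha:=\inf\{\mu(G): G\in\cG,\; \mu(G\cap A)>0\}\geq \eps>0.
$$
Since $\cG$ covers $X$ and $\mu(A)>0$, I pick any $G_0\in\cG$ with $\mu(G_0\cap A)>0$ and set $B:=G_0\cap A$, a set of positive and finite measure.

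Next I would use the nonatomicity of $\X$ to partition $B$ into a countably infinite pairwise disjoint family $\{E_n\}_{n=1}^\infty\subset\cM^*$ of sets of strictly positive measure. This follows from Sierpinski's theorem on the range of a finite nonatomic measure: successively select $E_n\subset B\setminus\bigcup_{k<n}E_k$ with $\mu(E_n)=2^{-n}\mu(B)$, absorbing any leftover null set at the end. Then I would verify the three bullet points of Proposition \ref{notJNP} for $\{E_n\}$: writing $E:=\bigcup_n E_n\subset B\subset G_0$, one has $\mu(E)<\infty$; for every $G\in\cG$ with $\mu(G\cap E)>0$ we have $\mu(G\cap A)>0$, hence $\mu(G)\geq\alpha>0$; and $G_0$ itself contains every $E_n$. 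Proposition \ref{notJNP} then delivers that $\BMO$ fails the JNP, contradicting the hypothesis.

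The only conceptual point I expect to demand care is the translation step: recognizing that the very failure of a countable essential subcover by small generators---through Lemma \ref{Geps}---is exactly the uniform lower bound $\alpha>0$ needed by Proposition \ref{notJNP}. Once that correspondence is in place, nonatomicity is the right tool to produce infinitely many disjoint pieces inside $G_0\cap A$, and the Banach-space, completeness, and decomposability hypotheses serve as background assumptions rather than playing an active role in the argument.
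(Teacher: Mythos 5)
Your overall strategy is the paper's: argue the contrapositive, extract a positive-measure set that only ``large'' generators can meet, chop it into countably many disjoint pieces of positive measure inside a single generator using nonatomicity, and invoke Proposition \ref{notJNP}. However, there are two genuine gaps in the execution.

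First, the translation step you flag as the delicate one is precisely where the argument breaks. Lemma \ref{Geps} is an equivalence between two \emph{universally quantified} statements; it is not a pairwise equivalence. From the failure of (ii) for a particular pair $(A,\eps)$ you cannot conclude the failure of (i) for that same pair: $\mathscr{G}_{\eps}$ may well contain generators meeting $A$ in positive measure and still fail to admit a countable essential subcover of $A$ (it could essentially cover half of $A$ and entirely miss the other half). Consequently your claim that $\alpha=\inf\{\mu(G):\mu(G\cap A)>0\}\geq\eps$ is unjustified, and the second bullet of Proposition \ref{notJNP} is not verified for your family $\{E_n\}\subset G_0\cap A$. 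The correct move, which is what the paper does, is to apply Lemma \ref{countablecover} to produce a subset $\widetilde{E}\subset A$ with $\widetilde{E}\in\cM^*$ and $\mu(G\cap\widetilde{E})=0$ for every $G\in\mathscr{G}_{\eps}$, and then run your construction inside $\widetilde{E}$ rather than $A$; every generator meeting $\widetilde{E}$ in positive measure then has measure at least $\eps$, which is the uniform lower bound Proposition \ref{notJNP} needs.

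Second, the existence of $G_0$ with $\mu(G_0\cap A)>0$ (or, after the repair, $\mu(G_0\cap\widetilde{E})>0$) does not follow from ``$\cG$ covers $X$'': the cover may be uncountable, and an uncountable union of sets each meeting $\widetilde{E}$ in a null set can still contain $\widetilde{E}$. This is exactly where the hypothesis that $\BMO$ is a Banach space enters actively: by Theorem \ref{completeness} (or directly by Lemma \ref{lemmanotnorm}), if no generator met $\widetilde{E}$ in positive measure then $\|\cdot\|_{\BMO}$ would not be a norm modulo constants. So the Banach-space, completeness, and decomposability hypotheses are not mere background assumptions. With these two repairs your proof coincides with the paper's; the nonatomic splitting (the paper cites 215D of Fremlin rather than Sierpi\'nski's theorem, but either works) and the final appeal to Proposition \ref{notJNP} are fine.
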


\begin{proof}
Assume that for some $A \in \cM^*$ and some $\delta>0$, $\cG_{\delta}$ does not countably essentially cover $A$. Then, by the contrapositive of Lemma \ref{Geps}, there exists $\widetilde{E} \in \cM^*$ and $\eps>0$ such that for every $G \in \mathscr{G}_{\eps}$, $\mu(G \cap \widetilde{E}) = 0$. In other words, if $G \in \cG$ and $\mu(G \cap \widetilde{E}) >0$, then $\mu(G) \geq \eps$.

As $\BMO$ is a Banach space, it follows that $(\X,\cG)$ is $\sigma$-decomposable by Theorem \ref{completeness}, so we know there exists a $G_0 \in \cG$ such that $\mu(G_0 \cap \widetilde{E}) >0$. By 215D in \cite{fremlin2}, we can fix a disjoint collection of sets $\{E_n \}_{n=1}^{\infty} \subset \cM^*$ such that for every $n \in \N$, $\mu(E_n)>0$ and $E = \left( \bigcup\limits_{n=1}^{\infty} E_n \right) \subset (G_0 \cap \widetilde{E}) \subset G_0$. Hence, $E \in \cM^*$ and, if $\mu(G \cap E)>0$, then $\mu(G \cap \widetilde{E})>0$, so $\mu(G) \geq \eps$. Therefore, we have all the necessary conditions of Proposition \ref{notJNP}, and so $\BMO$ does not have the JNP.
\end{proof}

\begin{corollary}\label{shrinkingcorollary}
Under the conditions of Theorem \ref{epscover}, if $\BMO$ has the JNP, then for almost every $x \in X$ there exists a sequence $\{G_n\} \subset \cG(x)$ such that $G_n \to x$, i.e. $\cG$ has the shrinking property.
\end{corollary}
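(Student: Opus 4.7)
The plan is to show that the set of $x \in X$ for which no shrinking sequence exists is null, by localizing to each atom of a decomposition subordinate to $\cG$ and reassembling. Since $\BMO$ is a Banach space modulo constants, Theorem \ref{completeness} gives that $(\X,\cG)$ is $\sigma$-decomposable, and Proposition \ref{eomega} supplies a decomposition $\cE = \{E_0\} \cup \{E_\omega\}_{\omega \in \Omega}$ subordinate to $\cG$ in which $E_0$ is null and each $E_\omega \in \cM^*$.

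Fix $\omega \in \Omega$. Since $E_\omega \in \cM^*$ and $\BMO$ has the JNP, Theorem \ref{epscover} furnishes, for each $n \in \N$, a countable essential subcover $\{G^{\omega,n}_j\}_{j \in \N} \subset \mathscr{G}_{1/n}$ of $E_\omega$. Set $S^\omega_n = E_\omega \cap \bigcup_{j} G^{\omega,n}_j$; then $E_\omega \setminus S^\omega_n$ is null, and so $N^\omega := \bigcup_{n=1}^\infty (E_\omega \setminus S^\omega_n) \subset E_\omega$ is null as a countable union of null sets. Any $x \in E_\omega \setminus N^\omega$ lies in $S^\omega_n$ for every $n$, so for each $n$ some $G^{\omega,n}_{j_n} \in \cG(x)$ satisfies $\mu(G^{\omega,n}_{j_n}) < 1/n$, yielding the required sequence $G_n \to x$.

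Now let $N = E_0 \cup \bigcup_{\omega \in \Omega} N^\omega$. Applying Proposition \ref{nullsets} with the decomposition $\cE$ to the family consisting of $E_0 \subset E_0$ and each $N^\omega \subset E_\omega$ shows that $N$ is null. For any $x \in X \setminus N$, the point $x$ sits in a unique $E_\omega$ with $x \notin N^\omega$ and hence admits a shrinking sequence by the previous step, giving the conclusion.

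The only real subtlety is the last step: the index set $\Omega$ may be uncountable, so the measurability and nullity of $\bigcup_\omega N^\omega$ is not automatic and cannot be obtained by a naive countable-union argument. This is exactly the role of Proposition \ref{nullsets}, enabled by $\sigma$-decomposability, the subordinate decomposition from Proposition \ref{eomega}, and the completeness of $\X$; it is also the reason the hypothesis that $\BMO$ is a Banach space modulo constants (rather than mere semifiniteness) is the appropriate strength for this corollary.
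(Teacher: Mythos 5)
Your proof is correct and follows essentially the same route as the paper's: apply Theorem \ref{epscover} to each piece of a decomposition for each $\eps = 1/n$, intersect over $n$, and use decomposability (via Proposition \ref{nullsets}) to handle the possibly uncountable union of exceptional null sets. The only cosmetic difference is that you invoke the subordinate decomposition from Proposition \ref{eomega}, whereas the paper works with an arbitrary decomposition of $\X$, which already suffices since Theorem \ref{epscover} applies to any $A \in \cM^*$.
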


\begin{proof}
Fix a decomposition $\{X_{\omega}\}_{\omega \in \Omega}$ of $X$. Now, fix some $\omega \in \Omega$. Then, by Theorem \ref{epscover}, for every $n \in \N$ we can find an essential countable cover of $X_{\omega}$ by sets of measure smaller than $1/n$.

Therefore, almost every $x \in X_{\omega}$ is contained in the intersection over all $n$ of these covers. Since it is true for every $\omega \in \Omega$, this completes the proof.
\end{proof}


\section{Examples}

\subsection{Topological local integrability}

Here we will look at the definition of local integrability given by Definition \ref{deflploc} versus the classical definition, when $(\X,\tau)$ is a topological measure space (see Definition \ref{toplocint}) and $\cG$ is an open cover.

\begin{definition}
We say that $f \in \Lploc(\X, \tau)$ if for every compact set $K \subset X$,
$$
\int\limits_K\! |f|^p \, d\mu < \infty\,.
$$
\end{definition}

If we assume that $\cG$ is an open cover, one can notice, as shown in \cite{dafni1}, that $\BMOp \subset \Lploc(\X, \cG) \subset \Lploc(\X, \tau)$. If, in addition, we assume that every $G \in \cG$ is contained in a compact set, then we can equivalently define $\BMOp$ as
\begin{equation}\label{classicbmo}
\BMOp = \{f \in L^p_{loc}(\X,\tau): \|f\|_{\BMOp} < \infty\}.
\end{equation}
Therefore, the BMO space as defined in Definition \ref{defbmo} includes the most common BMO spaces, defined over cubes or rectangles. 

Conversely, the following example will show that even when considering $\cG$ to be the collection of open balls, if the space is not locally compact, then $\Loneloc(\X,\cG)$ may be a proper subset of $\Loneloc(\X,\tau)$, hence both definitions of BMO may not be equivalent. We will build a Lindel\"{o}f, $\sigma$-compact metric space such that $\BMO$ is a Banach space, but $X$ is not locally compact.

When $(X,\rho)$ is a metric space, we use the notation $B(x,r)$ for the open ball with fixed centre $x\in{X}$ and radius $r>0$.

\begin{example}\label{localintegrability}
Let $X = \R^2$ and the measure be Lebesgue measure with the Lebesgue $\sigma$-algebra. We will consider a point $x \in X$ with its polar coordinates, $x = (r_x,\theta_x)$, where $r_x=|x|$ and $\theta_x \in [0,2\pi)$. First, define the sequence $\theta_k = \sum\limits_{i=0}^{k-1} \frac{\pi}{2^i}$ for $k\geq 1$ and $\theta_0 = 0$. Then we will partition the space into subsets $\{ P_k \}_{k=1}^{\infty}$, where 
$$
P_k = \{ x = (r,\theta) \in X \backslash \{(0,0)\}: \theta \in [\theta_{k-1},\theta_k) \} \; \text{and} \; P_0 = \{(0,0)\}\,.
$$
What we are doing here is cutting up $\R^2$ like a pie, but with the pieces getting infinitely smaller.

Now we define the following metric:
$$
\rho(x,y) =
\begin{cases}
|x-y|, & \text{if $x$ and $y$ belong to the same $P_k$} \\
|x| + |y|, & \text{otherwise}. \\
\end{cases}
$$
One can verify geometrically that any ball in $(\R^2,\rho)$ is a Borel set in the Euclidean metric, hence the Lebesgue measure is a Borel measure on this space, as well. Let $\cG$ be the collection of open balls in $(\R^2,\rho)$. By the construction of the metric, for any $x \in P_k$, $B(x,|x|) \subset P_k$. Hence, each $P_k$ is open in this metric and one can verify that each $P_k$ is Lindel\"{o}f and connected.

Since the partition is countable, it follows that $(\R^2,\rho)$ is Lindel\"{o}f. And, since the balls centred at the origin are all identical to their Euclidean counterpart, then we get that the whole space $\R^2$ is connected. Therefore, by Theorem \ref{topologytheorem}, $\BMO$ is a Banach space.

The interesting aspect of this space is that it is $\sigma$-finite and Lindel\"{o}f, but not locally compact, because of the point $(0,0)$. Let $D_{\eps}$ be a closed ball centred at $(0,0)$ of radius $\eps$ and $B_{\delta}$ and $B_{\lambda}$ be open balls centred at $(0,0)$ of radius $\delta > \eps$ and $\lambda < \eps$, respectively. Then we get that $\{(P_k \cap B_{\delta}) \}_{k=1}^{\infty}$ with $B_{\lambda}$ is an open cover of $D_{\eps}$ that does not admit a finite subcover. Since $\eps$ is arbitrary, we get that $(0,0)$ has no neighbourhood with compact closure, and so $X$ is not locally compact.

Therefore if we let $\cG$ be the collection of open balls, then for any $x \in \R^2$ and $r>|x|$, the ball $B(x,r)$ contains a ball centred at $0$, hence it is not contained in a compact set. This means that any compact set is the finite union of compact sets contained in different $P_k$'s.

Let $B_k = B(0,1) \cap P_k$ and $f=\sum\limits_{k=1}^{\infty} \frac{k}{\mu(B_k)}\chi_{B_k}$. Then $f \in \Loneloc(\X,\tau) \setminus \Loneloc(\X,\cG)$, since 
$$
\int_{B(0,1)}\! |f| \, d\mu = \sum\limits_{k=1}^{\infty} k = \infty\,,
$$
but for any $K$ compact, there exists $N$ such that 
$$
\int_K\! |f| \, d\mu \leq \sum\limits_{k=1}^N \int_{B_k}\! |f| \, d\mu = \sum_{k=1}^{N}k < \infty\,.
$$
\end{example}

\subsection{Examples of decomposable spaces that are not $\sigma$-finite}

Here, we construct a decomposable measure space $\X$ that is not $\sigma$-finite, coupled with a cover $\cG$ that makes $\BMO$ a Banach space. Then, we will add a metric, making $(\X,\tau)$ a topological measure space and seeing how it relates with Theorem \ref{topologytheorem} and Corollary \ref{metriccorollary}.

\begin{example}\label{ex2}
Consider $X=\R^2$. We will consider points in polar coordinates. We start by partitioning $X$ into $\{ M_{\theta} \}_{\theta \in [0,2\pi)}$, where $M_{\theta} = \{ x \in X \setminus \{(0,0)\}: \theta_x = \theta \}$ for every $\theta \in [0,2\pi)$ and $O = \{(0,0)\}$.

For $\theta \in [0,2\pi)$, if $m$ is the one-dimensional Lebesgue measure and $\varphi_{\theta}$ the isomorphism from $M_{\theta}$ to $(0,\infty)$, then define $\mu_{\theta} = m \circ \varphi_{\theta}$. If we let $\mu(O)=0$ and $\mu(A) = \sum\limits_{\theta \in [0,2\pi)} \mu_{\theta}(A\cap M_{\theta})$, then we have a well-defined measure space $\X$ that is, by its construction, decomposable but not $\sigma$-finite. We can visualize each $M_{\theta}$ as a ray that starts at the origin. 

We now define a metric on $\R^2$ that will be somewhat ``compatible" with $\X$. For every $x,y \in X$, let
$$
  \rho(x,y) =
  \begin{cases}
    |x - y|, & \text{if $\theta_x = \theta_y$} \\
     |x| + |y|, & \text{if $\theta_x \neq \theta_y $} \\
  \end{cases}
$$
This is also known as the radial or hedgehog metric (see \cite[p. 251]{engel}). Note that we calculate the distance between two points by moving along the rays (the hedgehog's spikes) and passing through the origin (the hedgehog's body), if two points are not on the same ray.

To obtain a cover $\cG$ such that $\BMO$ is a Banach space, let $G_{I}^{\theta} = \varphi_{\theta}^{-1}(I)$ for every interval $I \subset (0,\infty)$ and let
\begin{itemize}
\item $\widetilde{\cG} = \{ O \cup G^0_{(0,1]} \cup G^{\theta}_{(0,1]} \}_ {\theta \in (0,2\pi)}$,\\
\item $\cG_{\theta} = \{G^{\theta}_I: \, \text{{\it I is any closed interval contained in}} \, (0,\infty) \}$, and \\
\item $\cG = \widetilde{\cG} \cup \left( \bigcup\limits_{\theta \in [0,2\pi)} \cG_{\theta} \right)$.
\end{itemize}

By the properties of the intervals in $\R$, we know that for each $\theta \in [0,2\pi)$, $\cG_{\theta}$ has the FCP, and since $\widetilde{\cG}$ has the FCP by construction, using transitivity, we get that $\cG$ has the FCP. Also each $M_{\theta}$ is countably covered by closed intervals, so $(\X,\cG)$ is $\sigma$-decomposable. By Theorem \ref{completeness}, this is enough to show that $\BMO$ is a Banach space.

Notice that every $G \in \cG$ is a closed set, every ball has strictly positive measure and the space is connected. Therefore the assumption that $\cG$ is an open cover in Theorem \ref{topologytheorem} is not a necessary conditions in order for $\BMO$ to be a Banach space.
\end{example}

Note that any ball centred at the origin is just a regular two-dimensional Euclidean ball. Then any open set covering the point $(0,0)$ will have measure infinity, since any ball centred at the origin is an uncountable union of sets of measure greater than 0. As such, it is impossible to cover $X$ completely with an open cover $\cG$, which also follows from Corollary \ref{metriccorollary}. 

\begin{example}\label{ex3}
Consider the measure space $\X$ and metric $\rho$ as in Example \ref{ex2}, but remove $O = \{(0,0)\}$ from $X$. Let 
\begin{itemize}
\item $\cG_{\theta} = \{G^{\theta}_I: \, \text{{\it I is any open interval in}} \, (0,\infty) \}$,\\
\item $\widetilde{\cG} = \{ (G^{\theta_1}_{I_1} \cup G^{\theta_2}_{I_2}): \, \theta_1, \theta_2 \in [0,2\pi) \; I_1 \; \text{{\it and }} \; I_2 \; \text{{\it are any open intervals in}} \, (0,\infty) \}$, and\\
\item $\cG = \widetilde{\cG} \cup \left( \bigcup\limits_{\theta \in [0,2\pi)} \cG_{\theta} \right)$.
\end{itemize}
Then we can use the same arguments as in Example \ref{ex2} to show that for this $(\X,\cG)$, $\BMO$ is a Banach space.
\end{example}

Since in the example above, $X$ is neither connected nor Lindel\"{o}f, $\cG$ is an open cover, and every ball has strictly positive measure, this example shows that the assumption that $X$ is connected in Corollary \ref{metriccorollary} is not necessary. Furthermore, we see that $(X,\rho)$ may not be Lindel\"{o}f when $X$ is not connected.

\subsection{Denjoy families}

In this subsection, we consider examples related to Denjoy families. 

\begin{example}\label{rectangles}
Let $\X$ be $\R^n$ with the Lebesgue measure and $\cG$ be the collection of open rectangles. Classically, as in \cite{kor}, the collection of rectangles only includes those with sides parallel to the axes, but we will here consider all possible rectangles.

Fix $R \in \cG$ and let $\cG(R) \subset \cG$ be the collection of rectangles with sides parallel to $R$ and with the same proportions between sidelengths as $R$. In other words, $\cG(R)$ is the collection of translated homothetic transformations of $R$, with the homothecy centre at the centre of $R$. We will denote by $cR+x$ the homothetic transformation of $R$ of scale $c$, translated by $x \in \R^n$.

We will show that $\cG(R)$ is a Denjoy family. Note that by properties of the Lebesgue measure, $\mu(cR+x)=\mu(cR)=c^n \mu(R)$. Therefore, we can see geometrically that if we let $a=(3/2)^n$, then $\Omega_a(R) \subset 4R$, with $\mu(4R) = 4^n \mu(R)$. So with $b=4^n$, and since for any $R_1 \in \cG(R)$, $\cG(R_1) = \cG(R)$, we get that $\cG(R)$ is a Denjoy family. Since $\cG = \bigcup\limits_{R \in \cG} \cG(R)$ and we can pick the uniform constants $a=(3/2)^n$ and $b=4^n$, we can apply by Proposition \ref{unionDenjoy} to conclude that $\BMO$ has the JNP.
\end{example} 

Note that the constant in the exponent for the John-Nirenberg inequality on rectangles obtained by Korenovskii ($c_2=\frac{2}{e}$) in \cite[Theorem 3.23]{kor} provides a better estimate than one that can obtained from Lemma \ref{calderonzygmund} and Theorem \ref{Denjoytheorem}. In fact, in Example \ref{rectangles} we could take any $a>1$, hence any $b>3^n$. Therefore, we could take the limit $b \to 3^n$ in \eqref{keyinequality} to get the best constant obtainable by our tools: $$c_2=\frac{1}{2(3^{3n})(3^n+1)e} < \frac{2}{e}\,,$$
meaning that our estimate is bigger than the one of Korenovskii. This is not surprising though, since our proof relies only on measure-theoretic tools.

Now we come to a large and important important class of measure spaces equipped with a natural Denjoy family: the spaces of homogeneous type \`{a} la Coifman-Weiss \cite{coifweiss}. Before proving this assertion, we present the relevant definitions. 

Let $(X,d)$ be a quasimetric space, i.e. $d:X \times X \to [0,\infty)$ has all the properties of a metric, except that the triangle inequality is satisfied with a constant $K\geq 1$:
\begin{equation}\label{triangle}
d(x,y) \leq K (d(x,z)+d(z,y))\,.
\end{equation}
We will require that the balls $B(x,r)=\{y \in X: d(x,y)<r\}$ form an open basis for the topology $\tau$, i.e. for any $x \in X$, $r>0$ and $y \in B(x,r)$, there exists $\eps_y$ such that $B(y,\eps_y) \subset B(x,r)$. Let $\cB$ be the collection of all open balls. Note that in some cases the balls are not open, but as shown in \cite{macias}, in this case there is an equivalent quasimetric for which the balls are open, and so with the doubling property, we would get equivalent BMO spaces with either quasimetric. 

Now let $\mu$ be a nonatomic positive Borel measure on $(X,d)$. We will require that $\mu$ satisfies the following doubling property: there exists a constant $A > 1$ such that for every $B(x,r) \in \cB$,
\begin{equation}\label{doubling}
0<\mu(B(x,2Kr)) \leq A \mu(B(x,r))<\infty.
\end{equation}
As we will see, our definition of Denjoy doubling is a generalization of this doubling property, without any notion of distance. Note that the constant $K$ in \eqref{doubling} is the same as in \eqref{triangle}. Finally, we will also require that the measure is outer regular, i.e. for every $E \in \cM$, 
$$
\mu(E) = \inf \left\{ \mu(U) : U \; \text{is open}, E \subset U \right\}\,.
$$
Under all these assumptions, we say that $(\X,d)$ is a space of homogeneous type. 

Note that when $K=1$, $d$ is a metric. The outer regularity property is often assumed for doubling metric measure spaces, such as in \cite{heinonen}. Also, by Theorem 1.3 in \cite{coifweiss}, any bounded open set is a countable union of balls, hence so is any open set $U$, since for every $n \in \N$, $U = \bigcup\limits_{n=1}^{\infty} U \cap B(x,n)$, where $x \in U$. Therefore, the Borel $\sigma$-algebra is the same as the one generated by balls.

\begin{lemma}
Let $(\X,d)$ be a space of homogeneous type. For any $B_0=B(x,r)\in \cB$, if $\mu(B_0)<A^{-N}\mu(X)$ for some $N \in \N$, then there exist $R_n \geq r$ such that $$A^{n-1}\mu(B_0)\leq \mu(B(x,R_n)) < A^n \mu(B_0)\,$$
for any $1 \leq n \leq N$.
\end{lemma}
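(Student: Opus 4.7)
The plan is to analyze the nondecreasing function $f(\rho):=\mu(B(x,\rho))$ on $[r,\infty)$ and to use the doubling hypothesis \eqref{doubling} to force the appropriate intermediate values of $f$ to appear. First I would record the relevant facts about $f$: it is left-continuous (since $B(x,\rho_k)$ is an increasing union when $\rho_k\uparrow\rho$), it satisfies $f(r)=\mu(B_0)$, and since $X=\bigcup_{k\geq 1}B(x,k)$ continuity of measure gives $f(\rho)\to\mu(X)$ as $\rho\to\infty$. The assumption $\mu(B_0)<A^{-N}\mu(X)$ then yields $A^n\mu(B_0)<\mu(X)$ for every $1\leq n\leq N$, so each set $\{\rho\geq r:f(\rho)\geq A^n\mu(B_0)\}$ is nonempty.

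For each $1\leq n\leq N$ I would set
$$
\rho_n:=\inf\{\rho\geq r:f(\rho)\geq A^n\mu(B_0)\}\in[r,\infty),
$$
with the convention $\rho_0:=r$ (consistent with the infimum, since $f(r)=\mu(B_0)$). For $n=1$ I would simply take $R_1=r$, for which $f(R_1)=\mu(B_0)\in[\mu(B_0),A\mu(B_0))$. For $n\geq 2$, once I have the strict inequality $\rho_{n-1}<\rho_n$, I would choose any $R_n\in(\rho_{n-1},\rho_n)$. Such an $R_n$ automatically satisfies $R_n>\rho_{n-1}\geq r$, and the infimum characterization combined with monotonicity yields $A^{n-1}\mu(B_0)\leq f(R_n)<A^n\mu(B_0)$.

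The crux is to establish $\rho_{n-1}<\rho_n$ for $n\geq 2$, and this is where the doubling property does its work. Arguing by contradiction, suppose $\rho_{n-1}=\rho_n=:\rho^*$. If $\rho^*=r$, then $f$ takes values at least $A^n\mu(B_0)$ on points arbitrarily close to $r$ from the right, yet \eqref{doubling} gives $f(2Kr)\leq A\mu(B_0)<A^n\mu(B_0)$ for $n\geq 2$, a contradiction. If $\rho^*>r$, I would choose some $\rho$ in the nonempty interval $(\max(\rho^*/(2K),r),\rho^*)$: the constraint $\rho<\rho_{n-1}$ forces $f(\rho)<A^{n-1}\mu(B_0)$, so doubling gives $f(2K\rho)\leq Af(\rho)<A^n\mu(B_0)$, while the constraint $2K\rho>\rho_n$ forces $f(2K\rho)\geq A^n\mu(B_0)$, the desired contradiction.

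The main obstacle is that $f$ need not be right-continuous and may have jumps across spheres of positive measure, so a naive intermediate value argument is unavailable. The infimum-based construction together with the doubling inequality sidesteps this pathology by providing exactly the comparison needed to rule out the coincidence $\rho_{n-1}=\rho_n$, leaving an open interval in which $R_n$ can be selected.
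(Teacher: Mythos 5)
Your proof is correct and follows essentially the same route as the paper's: both identify the infimum radius $q_n$ at which $\mu(B(x,\cdot))$ first reaches $A^n\mu(B_0)$ and apply the doubling inequality to a pair of radii $\rho$ and $2K\rho$ straddling that threshold. The paper takes $R_n=q_n-\eps_n$ with $2K(q_n-\eps_n)=q_n+\eps_n$ and reads off the lower bound directly from doubling, whereas you use doubling to separate consecutive thresholds $\rho_{n-1}<\rho_n$ and then choose $R_n$ in between; this is a minor reorganization of the same estimate.
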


This means that although the measure of balls can have jumps, it cannot jump more than the doubling constant.

\begin{proof}
Fix a ball $B_0=B(x,r)$ such that $\mu(B_0)<A^{-N}\mu(X)$. First note that since $X = \bigcup\limits_{i=1}^{\infty} B(x,i)$, then for any $2 \leq n \leq N$, there exists $s_n > r$ such that $\mu(B(x,s_n)) \geq A^n \mu(B_0)$. Fix $n$ and let $q_n$ be the infimum over all such $s_n>r$.

Then for every $\eps>0$, $\mu(B(x,q_n-\eps)) < A^n\mu(B_0) \leq \mu(B(x,q_n+\eps)$. Let $\eps_n = \frac{2K-1}{2K+1}q_n$, then $2K(q_n-\eps_n)=(q_n+\eps_n)$ and by \eqref{doubling}, $$\mu(B(x,q_n-\eps_n))\geq A^{-1}\mu(B(x,(2K(q_n-\eps_n))) = A^{-1}\mu(B(x,q_n+\eps_n)) \geq A^{n-1}\mu(B_0)\,.$$ So letting $R_n=q_n-\eps_n$, we have that $\mu(B(x,R_n)) >\mu(B_0)$, so $R_n > r$ and we have proven the claim.
\end{proof}

\begin{example}\label{homogeneous}
Let $(\X,d)$ be a space of homogeneous type. We will show that the collection $\cB$ satisfies the conditions of Proposition \ref{unionDenjoy}, demonstrating that $\mathrm{BMO}_{\cB}(\X)$ has the JNP.

Since $\mu$ is nonatomic by assumption, $B(x,r) \subset B(x,R)$ whenever $r \leq R$ and $\{x\} = \bigcap\limits_{r>0} B(x,r)$, hence $\lim\limits_{r \to 0} \mu(B(x,r)) = \mu(\{x\}) = 0$, so $\cB$ has the shrinking property. 

Fix $\widetilde{B} \in \cB$ and let $a = A$, with $A$ being the constant in \eqref{doubling}. We want to show that with $b=A^8$, $\mathscr{D}_a= \{B \in \cB: B \cap \widetilde{B} \neq \emptyset \; \& \; \mu(B)<a\mu(\widetilde{B})\}$ is a local Denjoy family for $\widetilde{B}$. Take an arbitrary $B(x,r)=B_0 \in \mathscr{D}_b= \{B \in \cB: B \cap \widetilde{B} \neq \emptyset \; \& \; \mu(B)<\frac{a}{b}\mu(\widetilde{B})\}$. 

By the previous claim, using $N=7$, there exists $R>r$ such that 
$$
A^3 \mu(B_0) \leq \mu(B(x,R))<A^4\mu(B_0)\,.
$$
Fix such an $R$. Now we want to show that $\Omega_{a=A}(B_0) \subset B(x,(2K)^4R)$, which would prove the doubling property, since $$\mu(B(x,(2K)^4R)) \leq A^4\mu(B(x,R))<A^8\mu(B(x,r))\,.$$

We first start by bounding the radius of the balls that form $\Omega_a(B_0)$. Take $y \in X$ and $r_y > R+r$. If $B(y,r_y) \cap B_0 \neq \emptyset$, then $d(x,y) < K(r_y+r)$, so for every $z \in B(x,R)$, 
$$d(y,z) < K(K(r_y + r) + R) < (2K)^2r_y\,.$$
Therefore, $B(x,R) \subset B(y,(2K)^2r_y)$, so $\mu(B(y,r_y)) > A^{-2}\mu(B(x,R)) \geq A \mu(B_0)$. Hence, we must have $r_y \leq r + R$, so $\Omega_a(B_0) \subset B(x,(2K)^3(R+r)) \subset B(x,(2K)^4R)$. Note that by choosing $a=A$, then for any $x \in X$ and $r>0$,  $B(x,2r) \subset \Omega_a \left(B(x,r) \right)$. This shows that $\mathscr{D}_a$ possesses the growth property.

As for the weak differentiation property, because we assumed outer regularity of the measure, then for any $f \in \Loneloc(\X,\cB)$, the Lebesgue differentiation theorem holds at almost every $x \in X$ (see \cite{toledano}). Thus, this implies the weak differentiation property.

Finally, if $\mu(\widetilde{B})<A^{-8}\mu(X)$, then $\widetilde{B} \left(x,(2K)^4R \right)$, as obtained previously, satisfies the engulfing property for all finite subcollections of $\mathscr{D}_a$. Otherwise, by the properties of a quasimetric, for any finite collection of balls, we can find a ball $B'$ that contains it and $\widetilde{B}$, which would satisfy the engulfing property since $\mu(\widetilde{B}) \geq A^{-8}\mu(X)$ implies that $\mu(B') \leq \mu(X) \leq A^8 \mu(\widetilde{G})$.

Therefore $\cD_a$ is a local Denjoy family for $\widetilde{B}$ with doubling constants $a=A$ and $b=A^8$. Since the constants are uniform over all of $\cB$, we have all the conditions for Proposition \ref{unionDenjoy}.
\end{example}

\begin{example}\label{ex3.2}
Let $(\X,\cG)$ be the space defined in Example \ref{ex3}. We can see that $\cG$ is a union of Denjoy families. In fact each $\cG^{\theta}$ is a Denjoy family by properties of intervals and so are any $(\cG^{\theta_1} \cup \cG^{\theta_2})$. Since all $\cG^{\theta}$'s are isomorphic, we can pick the doubling constants to be uniform. Therefore, by Proposition \ref{unionDenjoy}, this BMO space has the JNP, even though it is not $\sigma$-finite.
\end{example}

\subsection{The weak convergence property}

Now we consider an example which shows the weak differentiation property cannot be completely omitted. That is, while the shrinking property is necessary for the JNP in the nonatomic case, the shrinking, doubling, and growth properties alone are not sufficient. 

\begin{example}\label{noJNPexample}
Let $\X$ be $(0,1)^2$ equipped with two-dimensional Lebesgue measure and the Lebesgue $\sigma$-algebra, with generators from the collection $\cG = \{R^d_c = (c,d) \times (0,1): 0 \leq c<d \leq 1 \}$.

As $\X$ is of finite measure, thus decomposable, we can apply Theorem \ref{completeness} to conclude that $\BMO$ is a Banach space modulo constants, since the fact that $R^1_0=(0,1)^2=X \in \cG$ implies that $(\X,\cG)$ is $\sigma$-decomposable, and $\cG$ has the FCP.

Also, for every $x=(x_1,x_2) \in X$, there exists $N \in \N$ such that $\{ R_n = (x_1-1/n,x_1+1/n) \times (0,1) \}_{n>N} \subset \cG$ and $\mu(R_n) \to 0$. Therefore $\cG$ has the shrinking property.

Let $f(x,y) = y^{-1/2}$. For any $R \in \cG$, 
$$\fint_R\! |f| \, d\mu = \frac{1}{b-a} \int^b_a \int^1_0 y^{-1/2} \, dy dx = \int^1_0 y^{-1/2} \, dy = 2\,.$$
This is enough to show that $f \in \BMO$, but
$$\fint_R \!|f|^2 \, d\mu = \int^1_0 y^{-1}\, dy = \infty\,$$ for any $R \in \cG$. By Corollary \ref{JNPlploc}, this is enough to show that $\BMO$ does not have JNP.

Note that if we let $a=2$, for every $R=R_c^d \in \cG$, setting $R'=R_{c'}^{d'}$ with $c'= \max \{(3c-2d),0\}$ and $d'= \min \{(3d-2c),1\}$, we have that $\Omega_a(R) = R'$ and $\mu(R') \leq 5 \mu(R)$. Hence $\cG$ satisfies the doubling property with constants $a=2$ and $b = 5$. We already showed that it satisfies the shrinking property. Since for any $R \in \cG$, $\Omega_a(R)=R'$ and $\mu(R') \geq \min \{1,3\mu(R)\}$, we have that $\cG$ satisfies the growth property: $\mu(\Omega_a^j(R)) \to \mu(X)=1$.

Therefore, $\cG$ satisfies all the properties of a Denjoy family, except for the weak convergence property, since for any $y<1/4$, 
$$|f(x,y)|>2 = \lim\limits_{R \to (x,y)} \fint_R\! |f|\,d\mu\,.$$
\end{example}

In conclusion, the example here shows that we cannot omit the weak differentiation property from the properties of a Denjoy family; it must at least be replaced by some other, possibly weaker, condition.

\section*{{Acknowledgements}}

The authors would like to thank the referees for their careful reading and suggested corrections.


\section*{}

\end{document}